\def\bint{{\ifinner\rlap{\bf\kern.30em--}
		\int\else\rlap{\bf\kern.35em--}\int\fi}\ignorespaces}
\def\sbint{{\ifinner\rlap{\bf\kern.32em--}
		\hspace{0.078cm}\int\else\rlap{\bf\kern.45em--}\int\fi}\ignorespaces}
\def\bz{\beta}
\def\bxi{{\bar\xi}}
\def\BMO{\mathop\mathrm{\,BMO}}
\def\Lip{\mathop\mathrm{\,Lip}}
\def\dist{{\mathop\mathrm{\,dist\,}}}
\def\eqref#1{(\ref{#1})}
\def\bi{{\int_{B(x,t)}}}
\def\BMO{\mathop\mathrm{\,BMO}}
\def\BLO{\mathop\mathrm{\,BLO}}
\numberwithin{equation}{section}
\newcommand{\distance}{\mathop{\mathrm{distance}}\nolimits}
\begin{document}
\setcounter{tocdepth}{1}

\newtheorem{theorem}{Theorem}    
\newtheorem{proposition}[theorem]{Proposition}
\newtheorem{corollary}[theorem]{Corollary}
\newtheorem{lemma}[theorem]{Lemma}
\newtheorem{sublemma}[theorem]{Sublemma}
\newtheorem{conjecture}[theorem]{Conjecture}
\newtheorem{claim}[theorem]{Claim}
\newtheorem{fact}[theorem]{Fact}
\newtheorem{observation}[theorem]{Observation}

\newtheorem{definition}{Definition}
\newtheorem{notation}[definition]{Notation}
\newtheorem{remark}[definition]{Remark}
\newtheorem{question}[definition]{Question}
\newtheorem{questions}[definition]{Questions}
\newtheorem{hypothesis}[definition]{Hypothesis}

\newtheorem{example}[definition]{Example}
\newtheorem{problem}[definition]{Problem}
\newtheorem{exercise}[definition]{Exercise}

 \numberwithin{theorem}{section}
 \numberwithin{definition}{section}
 \numberwithin{equation}{section}

 \newtheorem{thm}{Theorem}
 \newtheorem{cor}[thm]{Corollary}
 \newtheorem{lem}{Lemma}[section]
 \newtheorem{prop}[thm]{Proposition}
 \theoremstyle{definition}
 \newtheorem{dfn}[thm]{Definition}
 \theoremstyle{remark}
 \newtheorem{rem}{Remark}
 \newtheorem{ex}{Example}
 \newtheorem*{thA}{Theorem A}
 \newtheorem*{thB}{Theorem B}

\def\repair{\medskip\hrule\hrule\medskip}

\def\bff{\mathbf f}
\def\bE{\mathbf E}
\def\bF{\mathbf F}
\def\bK{\mathbf K}
\def\bP{\mathbf P}
\def\bx{\mathbf x}
\def\bi{\mathbf i}
\def\bk{\mathbf k}
\def\bt{\mathbf t}
\def\bc{\mathbf c}
\def\ba{\mathbf a}
\def\bw{\mathbf w}
\def\bh{\mathbf h}
\def\bn{\mathbf n}
\def\bg{\mathbf g}
\def\bc{\mathbf c}
\def\bs{\mathbf s}
\def\bp{\mathbf p}
\def\by{\mathbf y}
\def\bv{\mathbf v}
\def\be{\mathbf e}
\def\bu{\mathbf u}
\def\bm{\mathbf m}
\def\bxi{{\mathbf \xi}}
\def\bR{\mathbf R}
\def\by{\mathbf y}
\def\bz{\mathbf z}
\def\bfb{\mathbf b}
\def\bPhi{{\mathbf\Phi}}

\newcommand{\norm}[1]{ \|  #1 \|}

\def\scriptl{{\mathcal L}}
\def\scriptc{{\mathcal C}}
\def\scriptd{{\mathcal D}}
\def\scrapd{{\mathcal D}}
\def\scripts{{\mathcal S}}
\def\scriptq{{\mathcal Q}}
\def\scriptt{{\mathcal T}}
\def\scriptf{{\mathcal F}}
\def\scriptm{{\mathcal M}}
\def\calM{{\mathcal M}}
\def\scripti{{\mathcal I}}
\def\scriptr{{\mathcal R}}
\def\scriptb{{\mathcal B}}
\def\scripte{{\mathcal E}}
\def\scripta{{\mathcal A}}
\def\scriptn{{\mathcal N}}
\def\scriptv{{\mathcal V}}
\def\scriptz{{\mathcal Z}}
\def\scriptj{{\mathcal J}}
\def\scriptk{{\mathcal K}}
\def\scriptg{{\mathcal G}}
\def\scripth{{\mathcal H}}

\def\HLM{{\mathbb M}}
\def\mbbm{{\mathfrak M}}

\def\bk{\mathbf k}
\def\kernel{\operatorname{kernel}}
\def\dist{\operatorname{distance}\,}
\def\eps{\varepsilon}

\def\reals{\mathbb R}
\def\naturals{\mathbb N}
\def\integers{\mathbb Z}
\def\rationals{\mathbb Q}
\def\one{\mathbf 1}
\def\complex{{\mathbb C}\/}

\def\lt{{L^2}}

\def\three{\mathbf 3}
\def\four{\mathbf 4}

\def\bart{\bar t}
\def\barz{\bar z}
\def\barx{{\bar x}}
\def\bary{\bar y}
\def\barz{{\bar z}}
\def\bars{\bar s}
\def\barc{\bar c}
\def\baru{\bar u}
\def\barr{\bar r}

\def\distance{\operatorname{distance}}
\def\md{{\mathcal D}}

\def\lsharp{\Lambda^\sharp}
\def\lnatural{\Lambda^\natural}
\def\sS{{\mathcal S}}
\def\barsS{\overline{\mathbb S}}

\title[Marcinkiewicz integrals associated with Schr\"{o}dinger operator] {Marcinkiewicz integrals associated with Schr\"{o}dinger operator on Campanato type spaces}

\author{Qingying Xue$^{*}$}
\author{Jiali Yu}
\address{Qingying Xue:
	School of Mathematical Sciences \\
	Beijing Normal University \\
	Laboratory of Mathematics and Complex Systems \\
	Ministry of Education \\
	Beijing 100875 \\
	People's Republic of China}
\email{qyxue@bnu.edu.cn}

\address{Jiali Yu\\
       	School of Mathematical Sciences \\
       Beijing Normal University \\
       Laboratory of Mathematics and Complex Systems \\
       Ministry of Education \\
       Beijing 100875 \\
       People's Republic of China}
     \email{jialiyu@mail.bnu.edu.cn}

\begin{abstract}The boundedness of the Marcinkiewicz integrals associated with Schr\"{o}dinger operator from the localized Morrey-Campanato space to the localized Morrey-Campanato-BLO space is established. Similar results for the Marcinkiewicz integrals associated with the Schr\"{o}dinger operator  with rough kernels were also investigated.

\end{abstract}

\keywords{Marcinkiewicz integral, Schr\"{o}dinger operator, Localized Morrey-Campanato space, Localized Morrey-Campanato-BLO space, Dini condition.\\
\indent{{\it {2020 Mathematics Subject Classification.}}} Primary 42B20,
Secondary 42B25.}

\thanks{The authors were partly supported by the National Key R\&D Program of China (No. 2020YFA0712900) and NNSF of China (No. 12271041).
\thanks{$^{*}$ Corresponding author, e-mail address: qyxue@bnu.edu.cn}}

\date{\today}

 \maketitle

\section{Introduction} Consider the Schr\"{o}dinger operator $\mathcal{L}=-\Delta+V$ on $\mathbb{R}^n,\,n \geq 3$, where $\Delta$ is the Laplace operator and the potential $V$ is a nonnegative locally $L^s$ integrable function on $\mathbb{R}^n$ and $V$ belongs to the reverse H\"{o}lder class $\mathcal{B}_s$. Recall that $V$ belongs to the reverse H\"{o}lder class $\mathcal{B}_s$ for some $s>\frac{n}{2}$ if there exists a constant $C=C(s,V)>0$ such that
\begin{align}\label{ineq:ni holder}
	\left(\frac{1}{\left|B\right|}\int_{B}V(x)^sdx\right)^{\frac{1}{s}}\leq C\left(\frac{1}{\left|B\right|}\int_{B}V(x)dx\right),
\end{align}
for every ball $B\subset\mathbb{R}^n$. It was known from \cite{G} that $V\in\mathcal{B}_s\ (s>1)$ implies $V\in\mathcal{B}_{s+\varepsilon}$ for some $\varepsilon>0$. Therefore, $\mathcal{B}_s\,(s>n/2)$ is equivalent with $\mathcal{B}_s\,(s\geq n/2)$ and in turn one can define the reverse H\"{o}lder index of $V$ as $\sup\{s:V\in\mathcal{B}_s\}$. Throughout this paper, we always assume $0\neq V\in\mathcal{B}_n$. Under this assumption, there exists $q_0>n$ such that $V\in\mathcal{B}_{q_0}$. In \cite{S3}, Shen introduced the associated auxiliary function $\rho(x,V)=\rho(x)$ as follow:
\begin{align}\label{eq:admissible}
	\rho(x)=\sup\limits_{r>0}\left\{r:\frac{1}{r^{n-2}}\int_{B(x,r)}V(y)dy\leq1\right\},\,\,x\in\mathbb{R}^n.
\end{align}
It is clearly that $0<\rho(x)<\infty$ for any $x\in\mathbb{R}^n$.

The study of Schr\"{o}dinger operator attracts much attention in both Harmonic analysis and PDE. It was Fefferman \cite{F}, Shen \cite{S3} and Zhong \cite{Z2} who obtained the basic estimates for $\mathcal{L}$ and the $L^p\,(1<p<+\infty)$ boundedness of $\mathcal{L}$ with certain potentials which includes Schr\"{o}dinger-Riesz transforms $\nabla\mathcal{L}^{-\frac{1}{2}}$ on $\mathbb{R}^n$. For $V\in\mathcal{B}_{n/2}$ with $n\geq 3$, Dziuban\'nski et al \cite{DZ1,DZ2,DZ3} studied the BMO-type space BMO${_\mathcal{L}}(\mathbb{R}^n)$ and Hardy-type space $H{_\mathcal{L}^p}(\mathbb{R}^n)$ for $ p\in(n/(n+1),1]$. They proved that the dual space of $H{_\mathcal{L}^1}(\mathbb{R}^n)$ is BMO${_\mathcal{L}}(\mathbb{R}^n)$, where the space BMO${_\mathcal{L}}(\mathbb{R}^n)$ can be regarded as a special case of BMO-type space introduced by Duong and Yan \cite{DY2,DY1}. In particular, the estimates of variants of several classical operators were given on these spaces, such as the Littlewood-Paley $g$-function associated with $\mathcal{L}$ defined by
\begin{align}\label{def:gwithl}
	g(f)(x):=\left(\int_{0}^{\infty}\left|Q_t(f)(x)\right|^2\frac{dt}{t}\right)^{\frac{1}{2}},
\end{align}
where $Q_t(f)(x)=t^2\left(\frac{dT_s}{ds}\big|_{s=t^2}f\right)(x)$ with the integral kernels $Q_t(x,y)=t^2\frac{\partial k_s(x,y)}{\partial s}\big|_{s=t^2}$ and $\{T_t\}_{t>0}=\{e^{-t\mathcal{L}}\}_{t>0}$ with kernels $\{k_t(x, y)\}_{t>0}$ is the semigroup generated by $\mathcal{L}$.

Later on, Dong et al.\ \cite{DHL} demonstrated that the dual space of $H{_\mathcal{L}^p}(\mathbb{R}^n)$ is the localized Morrey-Campanato space $\mathcal{E}{_\mathcal{L}^{1/p-1,q}},$ where $q\in[1,+\infty)$ and $p\in(n/(n+\delta),1]$ for some $\delta\in(0,\infty)$.

In order to state more related results, let us recall the definition of the Morrey-Campanato space $\mathcal{E}^{\alpha,p}(\mathbb{R}^n)$ and a subspace of it. 

	\begin{definition}[\cite{C,HMY1}] (Morrey-Campanato type spaces) Let $\alpha\in(-\infty,1],\,p\in(0,\infty)$.\\
		{\rm (i) (Morrey-Campanato space)} A locally integrable function $f$ is said to belong to the Morrey-Campanato space $\mathcal{E}^{\alpha,p}(\mathbb{R}^n)$ if
		\begin{align*}
			\Vert f\Vert_{\mathcal{E}^{\alpha,p}}:=\sup\limits_{B\subset\mathbb{R}^n}\frac{1}{\left|B\right|^{\alpha/n}}\left(\frac{1}{\left|B\right|}\int_{B}\left|f(x)-f_B\right|^pdx\right)^{\frac{1}{p}}<\infty,
		\end{align*}
		where the supremum is taken over all balls $B\subset\mathbb{R}^n$ and $f_B=\frac{1}{\left|B\right|}\int_{B}f(x)dx.$\\ {\rm (ii) (Morrey-Campanato-BLO space)} A locally integrable function $f$ is said to be in the Morrey-Campanato-BLO space $\widetilde{\mathcal{E}}{^{\alpha,p}}(\mathbb{R}^n)$ if 
		\begin{align*}
			\Vert f\Vert_{\widetilde{\mathcal{E}}{^{\alpha,p}}}:=\sup\limits_{B\subset\mathbb{R}^n}\frac{1}{\left|B\right|^{\alpha/n}}\left(\frac{1}{\left|B\right|}\int_{B}\big[f(x)-\mathop{{\rm essinf}}\limits_{B} f\big]^pdx\right)^{\frac{1}{p}}<\infty,
		\end{align*}
		where the supremum is taken over all balls $B\subset\mathbb{R}^n$. 
\end{definition}

When $p\in[1,\infty)$, it was known that for $\alpha>0,\ \mathcal{E}^{\alpha,p}(\mathbb{R}^n)=\Lip_\alpha(\mathbb{R}^n)$ with equivalent norms; for $\alpha=0$, $\mathcal{E}^{\alpha,p}(\mathbb{R}^n)$ is equivalent to $\BMO(\mathbb{R}^n)$; and for $\alpha<0$, $\mathcal{E}^{\alpha,p}(\mathbb{R}^n)$ concides with Morrey space. It was Coifman and Rochberg \cite{CR} who first introduced the BLO space which is a subspace of BMO. Subsequently, Bennett \cite{B} gave an equivalent characterization of BLO space. Moreover,  a subspace of Morrey-Campanato space, which is called Morrey-Campanato-BLO space $\widetilde{\mathcal{E}}{^{\alpha,p}}(\mathbb{R}^n)$ was considered by Hu et al. in \cite{HMY1}. It deserves to mention that when $\alpha=0$ and $p\in[1,\infty)$, $\widetilde{\mathcal{E}}{^{\alpha,p}}(\mathbb{R}^n)$ concides with BLO space.

It is quite natural to consider the above spaces associated with the Schr\"{o}dinger operator $\mathcal{L}$. The BLO-type space BLO${_\mathcal{L}}(\mathbb{R}^n)$ was characterized by Gao et al.\,\cite{GJT} in the study of the maximal Riesz transforms and Hardy-Littlewood maximal operator associated with $\mathcal{L}$. Almost at the same time, Yang et al. \cite{YYZ1} gave a slightly different definition of BLO${_\mathcal{L}}(\mathbb{R}^n)$ and demonstrated their equivalence in \cite{YYZ2}.

Yang et al. \cite{YYZ2,YYZ3} extented the above spaces on $\mathbb{R}^n$ to space $\mathcal{X}$ of homogeneous type in the sense of Coifman and Weiss \cite{CW1,CW2} with a reverse doubling condition. On $\mathcal{X}$, they introduced the locialized BMO space BMO${_\rho^q}(\mathcal{X})$, the locialized BLO space BLO${_\rho^q}(\mathcal{X})$, the localized Morrey-Campanato space $\mathcal{E}{_\mathcal{D}^{\alpha,p}}(\mathcal{X})$ and the localized Morrey-Campanato-BLO space $\widetilde{\mathcal{E}}{_\mathcal{D}^{\alpha,p}}(\mathcal{X})$. Moreover, the boundedness of various operators on these spaces were obtained. One of the typical example is the Littlewood-Paley $g$-function associated with $\mathcal{L}$. These results can be applied in a wide range of settings. It is worth pointing out that when $\alpha=0,\ q\in[1,+\infty),\ \mathcal{X}=\mathbb{R}^n,$ and$\ \mathcal{D}=D_\rho:=\left\{B=B(x,r):x\in \mathbb{R}^n,r\geq \rho (x)\right\}$, BMO${_\rho^{q}}(\mathcal{X})$ concides with the space BMO${_\mathcal{L}}(\mathbb{R}^n)$ and BLO${_\rho^{q}}(\mathcal{X})$ concides
with the space BLO${_\mathcal{L}}(\mathbb{R}^n)$. Meanwhile, for $\alpha\in\mathbb{R},\ p\in(0,+\infty),\ \mathcal{X}=\mathbb{R}^n,$ and$\ \mathcal{D}=D_\rho$, we denote $\mathcal{E}{_{\mathcal{D}}^{\alpha,p}}(\mathcal{X})$,\ $\widetilde{\mathcal{E}}{_{\mathcal{D}}^{\alpha,p}}(\mathcal{X})$ as $\mathcal{E}{_\mathcal{L}^{\alpha,p}}(\mathbb{R}^n)$, $\widetilde{\mathcal{E}}{_\mathcal{L}^{\alpha,p}}(\mathbb{R}^n)$, respectively.

We are now in a position to introduce the Localized Morrey-Campanato type spaces.
\begin{definition}(Localized Morrey-Campanato type spaces)
	Let $q\geq \frac{n}{2},\ V\in \mathcal{B}_q,\ 0<p <+\infty\ $, $ \alpha\in\mathbb{R}$ and  $f\in L{_{loc}^p}(\mathbb{R}^n)$.\\
		{\rm (i)} A function $f$ is said to be in the {\it Localized Morrey-Campanato space} $\mathcal{E}{_\mathcal{L}^{\alpha,p}}(\mathbb{R}^n)$ if
		\begin{align}\label{def:LMCS}
			\Vert f\Vert_{\mathcal{E}{_\mathcal{L}^{\alpha,p}}(\mathbb{R}^n)}:=\mathop{\sup}\limits_{B\notin D_\rho}\bigg(\frac{1}{\left|B\right|^{1+p\alpha}}\int_{B}\left|f(y)-f_B\right|^pdy\bigg)^\frac{1}{p}+\mathop{\sup}\limits_{B\in D_\rho}\frac{\left(\left|f\right|^p\right){_B^{\frac{1}{p}}}}{\left|B\right|^{\alpha}}<+\infty;
			\end{align}
		{\rm (ii)} A function $f$ is said to be in the {\it Localized Morrey-Campanato-BLO space} $\widetilde{\mathcal{E}}{_\mathcal{L}^{\alpha,p}}(\mathbb{R}^n)$ if 
		\begin{align}\label{def:LMCBS}
			\Vert f\Vert_{\widetilde{\mathcal{E}}{_\mathcal{L}^{\alpha,p}}(\mathbb{R}^n)}:=\mathop{\sup}\limits_{B\notin D_\rho}\bigg(\frac{1}{\left|B\right|^{1+p\alpha}}\int_{B}\big[f(y)-\mathop{{\rm essinf}}\limits_{B} f\big]^pdy\bigg)^\frac{1}{p}+\mathop{\sup}\limits_{B\in D_\rho}\frac{\left(\left|f\right|^p\right){_B^{\frac{1}{p}}}}{\left|B\right|^{\alpha}}<+\infty.
		\end{align}
\end{definition}
\noindent The following theorem shows that the Littlewood-Paley $g$-function associated with $\mathcal{L}$ is bounded from the localized Morrey-Campanato space to the localized Morrey-Campanato-BLO space on $\mathbb{R}^n$.

\hspace{-20pt}{\bf Theorem A} (\cite{YYZ3}).\label{YYZ3th} {\it\
	Let $q\in(n/2,\infty),\ V\in\mathcal{B}_q(\mathbb{R}^n,\left|\cdot\right|,dx),\ p\in(1,\infty)$ and let $\rho$ be the same as in $(\ref{eq:admissible})$. If $\alpha\in(-\infty,2/(3n)-1/(3q))$, then there exists a constant $C>0$ such that for any $f\in\mathcal{E}{_\mathcal{L}^{\alpha,p}}(\mathbb{R}^n)$, $g(f)\in\widetilde{\mathcal{E}}{_\mathcal{L}^{\alpha,p}}(\mathbb{R}^n)$ and 
	\begin{align*}
		\Vert g(f)\Vert_{\widetilde{\mathcal{E}}{_\mathcal{L}^{\alpha,p}}(\mathbb{R}^n)}\leq C\Vert f\Vert_{\mathcal{E}{_\mathcal{L}^{\alpha,p}}(\mathbb{R}^n)}.
	\end{align*}}

 On the other hand, in 1958, Stein \cite{S1}  introduced and studied the  following high dimensional Marcinkiewicz integral operator $\mu_{\Omega}$,
$$
\mu_{\Omega} (f)(x)=\bigg(\int_0^{\infty}\bigg|\int_{|x-y| \leq t} \frac{\Omega(x-y)}{|x-y|^{n-1}} f(y) d y\bigg|^2 \frac{d t}{t^3}\bigg)^{1 / 2}.
$$	
which can be regarded essentially as a special kind of the classical Littlewood-Paley $g$-functions. Here, $\mathbb{S}^{n-1}$ is the unit sphere in $\mathbb{R}^n$ equipped with normalized Lebesgue measure $d \sigma=d \sigma(x')$, $\Omega \in L^1\left(\mathbb{S}^{n-1}\right)$ is homogeneous of degree zero and satisfies
\begin{align}\label{tj:omegaxiaoshi}
	\int_{\mathbb{S}^{n-1}} \Omega(x') d \sigma(x')=0,
\end{align}
where $x'=\frac{x}{\left|x\right|}$ for any $x\neq0$. Since then, many nice works have been done, and it was shown in \cite{DFP,XCYFP} that  $\mu_\Omega$ is of type $(p, p)$ for $1<p<\infty$ provided that  $\Omega\in H^1(\mathbb{S}^{n-1})$  is homogeneous of degree zero, satisfying (\ref{tj:omegaxiaoshi}), and of  weak type (1, 1)  with $\Omega \in L\log^+ L({\mathbb{S}^{n-1}}) $ in \cite{FS} .  For more related works, we refer the
readers to \cite{BCP,DLXY,DLX1,DLX2,H2,HMY1,Q,SQCP,TW,W,Y} and the references therein.

Similar to the classical Marcinkiewicz integral operator $\mu_\Omega$, the Marcinkiewicz integral operator associated with the Schr\"{o}dinger operator $\mathcal{L}$ is defined by
\begin{align}\label{op:MO}
	\mu{_j^\mathcal{L}}(f)(x)=\bigg(\int_{0}^{\infty}\bigg|\int_{\left|x-y\right|\leq t}K{_j^\mathcal{L}}(x,y)f(y)dy\bigg|^2\frac{dt}{t^3}\bigg)^\frac{1}{2},
\end{align}
and the rough Marcinkiewicz integral operator associated with $\mathcal{L}$ is defined by
\begin{align}\label{op:KMO}
	\mu{_{j,\Omega}^\mathcal{L}}(f)(x)=\bigg(\int_{0}^{\infty}\bigg|\int_{\left|x-y\right|\leq t}\left|\Omega(x-y)\right|K{_j^\mathcal{L}}(x,y)f(y)dy\bigg|^2\frac{dt}{t^3}\bigg)^\frac{1}{2},
\end{align}
where $K{_j^\mathcal{L}}(x,y)=\widetilde{K{_j^\mathcal{L}}}(x,y)\left|x-y\right|$ and $\widetilde{K{_j^\mathcal{L}}}(x,y)$ is the kernel of $\textit{R}{_j^\mathcal{L}}=\frac{\partial}{\partial x_j}\mathcal{L}^{-\frac{1}{2}}, j=1,\cdots,n$. 
\begin{remark}
Let $V=0,\ K{_j^\Delta}(x,y)=\widetilde{K{_j^\Delta}}(x,y)\left|x-y\right|=\frac{(x_j-y_j)}{\left|x-y\right|^{n}}$ and $\widetilde{K{_j^\Delta}}(x,y)$ be the kernel of classical Riesz transform $\textit{R}{_j}=\frac{\partial}{\partial x_j}\Delta^{-\frac{1}{2}},\,j=1,\cdots,n$. Denote $K{_j}(x,y)=K{_j^\Delta}(x,y)$, then
\begin{align*}
	\mu{_j}(f)(x)=\bigg(\int_{0}^{\infty}\left|\int_{\left|x-y\right|\leq t}K{_j}(x,y)f(y)dy\right|^2\frac{dt}{t^3}\bigg)^\frac{1}{2},
\end{align*}
and
\begin{align}\label{eq:xmomega}
	\mu{_{j,\Omega}}(f)(x)=\bigg(\int_{0}^{\infty}\bigg|\int_{\left|x-y\right|\leq t}\left|\Omega(x-y)\right|K{_j}(x,y)f(y)dy\bigg|^2\frac{dt}{t^3}\bigg)^\frac{1}{2}.
\end{align} concide with the classical Marcinkiewicz integral type operators. \end{remark}

It was showed by  Gao and Tang in \cite{GT1}  that, for each $j$, $\mu{_j^\mathcal{L}}$ is bounded on $L^p(\mathbb{R}^n)$ for $1<p<\infty$ and bounded from $L^1(\mathbb{R}^n)$ to weak $L^1(\mathbb{R}^n)$. Meanwhile, Akbulut and Kuzu \cite{AK} demonstrated  the same conclusion for $\mu{_{j,\Omega}^\mathcal{L}}$ under the assumptions that $\Omega\in L^q(\mathbb{S}^{n-1})\,(1<q\leq\infty)$, is of  homogeneous of degree zero.  For more works, we refer the readers to see \cite{ZHX} and the references therein.

Comparing (\ref{def:gwithl}) with (\ref{op:MO}), it is easy to see that the kernels generated by $\mathcal{L}$ is different. This leads to a natural question whether Theorem {\bf A} holds for $\mu{_j^\mathcal{L}}$, or even $\mu{_{j,\Omega}^\mathcal{L}}$. In \cite{GT2}, Gao and Tang considered the case $\alpha=0$. It was shown that each $\mu{_j^\mathcal{L}}$ is bounded from BMO$_{\mathcal{L}}(\mathbb{R}^n)$ to BLO$_{\mathcal{L}}(\mathbb{R}^n)$, which left the case $\alpha\neq 0$ open. \vspace{0.2cm}

\hspace{-20pt}{\bf Theorem B} (\cite{GT2}).\label{GT2th}{\it\ \
	Let $V\in\mathcal{B}_n$. The Marcinkiewicz integral operators $\mu{_j^\mathcal{L}}\,(j=1,2,\cdots,n)$ associated with the Schr\"{o}dinger operator are bounded from BMO$_{\mathcal{L}}(\mathbb{R}^n)$ to BLO$_{\mathcal{L}}(\mathbb{R}^n)$. Furthermore, there exists a constant $C>0$ such that for any $f\in$ BMO$_{\mathcal{L}}$\,,
	\begin{align*}
		\Vert\mu{_j^\mathcal{L}}(f)\Vert_{\BLO_{\mathcal{L}}}\leq C\Vert f\Vert_{\BMO_{\mathcal{L}}}.
	\end{align*}
}
The first purpose of this paper is to give a positive answer to the above question, and thus extend Theorem {\bf B}, the case for $\alpha=0$ to the general case $\alpha\neq 0$ for $\mu{_j^\mathcal{L}}$. Our first main result of this paper is as follows:

\begin{theorem}\label{th1}
	Let $V\in\mathcal{B}_n\ p\in (1,+\infty),\ 0<\delta<2-\frac{n}{q_0},\ 0<\gamma<1-\frac{n}{q_0},\ 0<\beta<\min \big\{1,1-\frac{n}{q_0}\big\}.$ If $\alpha\in\left(-\infty,\frac 1n\min\big\{{\delta},{1-2\delta},\frac{\gamma}{2},\frac{\beta}{2},\frac{1}{3}\big\}\right)$, then for each $j,$ $1\le j\le n$, $\mu{_j^\mathcal{L}}$ is bounded from $\mathcal{E}{_\mathcal{L}^{\alpha,p}}(\mathbb{R}^n)$ to $\widetilde{\mathcal{E}}{_\mathcal{L}^{\alpha,p}}(\mathbb{R}^n)$. Furthermore, there exists a constant $C>0$ such that for any $f\in$ $\mathcal{E}{_\mathcal{L}^{\alpha,p}}(\mathbb{R}^n)$,
	\begin{align*}
		\Vert\mu{_j^\mathcal{L}}(f)\Vert_{\widetilde{\mathcal{E}}{_\mathcal{L}^{\alpha,p}}(\mathbb{R}^n)}\leq C\Vert f\Vert_{\mathcal{E}{_\mathcal{L}^{\alpha,p}}(\mathbb{R}^n)}.
	\end{align*}
\end{theorem}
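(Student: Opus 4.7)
The plan is to establish the required bound on $\Vert\mu{_j^\mathcal{L}}(f)\Vert_{\widetilde{\mathcal{E}}{_\mathcal{L}^{\alpha,p}}(\mathbb{R}^n)}$ by controlling the two suprema in \eqref{def:LMCBS} separately. Fix a ball $B=B(x_0,r_B)$; the cases $B\notin D_\rho$ (i.e.\ $r_B<\rho(x_0)$) and $B\in D_\rho$ will be treated through different decompositions of $f$, but the common backbone consists of a few pointwise estimates for the kernel $K{_j^\mathcal{L}}(x,y)$ derived from Shen's bounds on the Schr\"odinger-Riesz transform $R{_j^\mathcal{L}}$: a size estimate with Schr\"odinger decay of the form $\left(1+|x-y|/\rho(x)\right)^{-N}|x-y|^{-(n-1)}$ for every $N>0$, a H\"older smoothness of exponent $\gamma$ (resp.\ $\beta$) in the first (resp.\ second) variable carrying the same decay, and a $\delta$-Lipschitz modulus for the constant-function image $\mu{_j^\mathcal{L}}(\mathbf{1})$ in terms of $r_B/\rho(x_0)$.

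For $B\notin D_\rho$, I would decompose $f=f_0+f_1+f_2$ with $f_0=f_{B^\ast}$, $f_1=(f-f_{B^\ast})\chi_{B^\ast}$, $f_2=(f-f_{B^\ast})\chi_{(B^\ast)^c}$ and $B^\ast=2B$, and aim at the pointwise inequality
\begin{align*}
\mu{_j^\mathcal{L}}(f)(y)-\mathop{{\rm essinf}}_{z\in B}\mu{_j^\mathcal{L}}(f)(z)\ \leq\ \mu{_j^\mathcal{L}}(f_1)(y)+C\,|B|^{\alpha}\,\Vert f\Vert_{\mathcal{E}{_\mathcal{L}^{\alpha,p}}(\mathbb{R}^n)},\qquad y\in B.
\end{align*}
Once this is secured, averaging in $L^p$ on $B$ and invoking the $L^p$-boundedness of $\mu{_j^\mathcal{L}}$ (Gao--Tang \cite{GT1}) together with the Morrey-Campanato control of $f-f_{B^\ast}$ on $B^\ast$ produces the first supremum in \eqref{def:LMCBS}. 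The displayed inequality in turn reduces to showing that the oscillations of $\mu{_j^\mathcal{L}}(f_0)$ and $\mu{_j^\mathcal{L}}(f_2)$ over $B$ are bounded by $C\,|B|^\alpha\Vert f\Vert_{\mathcal{E}{_\mathcal{L}^{\alpha,p}}}$. For $\mu{_j^\mathcal{L}}(f_2)$, a dyadic annular decomposition $(B^\ast)^c=\bigcup_{k}(2^{k+1}B\setminus 2^kB)$, the H\"older kernel estimate, and the growth $|f_{2^kB^\ast}-f_{B^\ast}|\lesssim (2^kr_B)^{n\alpha}\Vert f\Vert_{\mathcal{E}{_\mathcal{L}^{\alpha,p}}}$ reduce matters to a geometric series whose convergence requires precisely $\alpha<\gamma/(2n)$ and $\alpha<\beta/(2n)$. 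For $\mu{_j^\mathcal{L}}(f_0)=f_{B^\ast}\,\mu{_j^\mathcal{L}}(\mathbf{1})$, the modulus bound $|\mu{_j^\mathcal{L}}(\mathbf{1})(y_1)-\mu{_j^\mathcal{L}}(\mathbf{1})(y_2)|\lesssim(r_B/\rho(x_0))^\delta$ combined with $|f_{B^\ast}|\lesssim|B|^\alpha\Vert f\Vert_{\mathcal{E}{_\mathcal{L}^{\alpha,p}}}$ delivers the oscillation estimate under the constraints $\alpha<\delta/n$ and $\alpha<(1-2\delta)/n$.

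For $B\in D_\rho$, the second supremum in \eqref{def:LMCBS} is controlled via the simpler splitting $f=f\chi_{B^\ast}+f\chi_{(B^\ast)^c}$: the local piece is handled by $L^p$-boundedness of $\mu{_j^\mathcal{L}}$ together with the second supremum in \eqref{def:LMCS} applied to $B^\ast$, which also lies in $D_\rho$; the far piece is summed dyadically using the Schr\"odinger decay $(1+2^kr_B/\rho(x_0))^{-N}\leq 2^{-kN}$, summable once $N$ is taken large enough. The main obstacle, in my view, is to rigorously extract the H\"older/Lipschitz kernel inequalities for $K{_j^\mathcal{L}}$ within the Marcinkiewicz integral structure $\bigl(\int_0^\infty|\cdot|^2\,dt/t^3\bigr)^{1/2}$; in particular, the $\delta$-modulus for $\mu{_j^\mathcal{L}}(\mathbf{1})$ requires a careful splitting of the $t$-integral into $t\leq|x-y|$ and $t>|x-y|$, together with Shen's decay bounds for $R{_j^\mathcal{L}}$ and the Fefferman-Phong inequality. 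This is the technical heart of the argument and is also responsible for the universal restriction $\alpha<1/(3n)$ appearing in the hypothesis.
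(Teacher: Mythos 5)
Your treatment of the case $B\in D_\rho$ matches the paper's, and you have correctly located where each of the hypotheses on $\alpha$ enters. The problem is the backbone of your argument for $B\notin D_\rho$: the pointwise inequality $\mu{_j^\mathcal{L}}(f)(y)-\mathop{{\rm essinf}}_{B}\mu{_j^\mathcal{L}}(f)\leq \mu{_j^\mathcal{L}}(f_1)(y)+C|B|^{\alpha}\Vert f\Vert_{\mathcal{E}{_\mathcal{L}^{\alpha,p}}}$ cannot be obtained from the decomposition $f=f_0+f_1+f_2$, because $\mu{_j^\mathcal{L}}$ is only sublinear. To compare $\mathop{{\rm essinf}}_{B}\mu{_j^\mathcal{L}}(f)$ with $\mu{_j^\mathcal{L}}(f_0+f_2)(y)$ you must use the reverse triangle inequality $\mu{_j^\mathcal{L}}(f)\geq\mu{_j^\mathcal{L}}(f_0+f_2)-\mu{_j^\mathcal{L}}(f_1)$, which yields $\mathop{{\rm essinf}}_{B}\mu{_j^\mathcal{L}}(f)\geq\mathop{{\rm essinf}}_{B}\mu{_j^\mathcal{L}}(f_0+f_2)-\mathop{{\rm esssup}}_{B}\mu{_j^\mathcal{L}}(f_1)$; the term $\mathop{{\rm esssup}}_{B}\mu{_j^\mathcal{L}}(f_1)$ is not controllable, since $f_1$ is merely in $L^p$ and $\mu{_j^\mathcal{L}}(f_1)$ need not be essentially bounded on $B$. (This is unlike the Hardy--Littlewood maximal function, where monotonicity gives $Mf\geq Mf_2$ directly.) The standard remedy, which the paper uses following \cite{HMY1,GT2,YYZ3}, is the elementary inequality $0\leq\mu{_j^\mathcal{L}}(f)-\mathop{{\rm essinf}}_{B}\mu{_j^\mathcal{L}}(f)\leq\{[\mu{_j^\mathcal{L}}(f)]^2-\mathop{{\rm essinf}}_{B}[\mu{_j^\mathcal{L}}(f)]^2\}^{1/2}$ combined with the \emph{exact} splitting of the square in the $t$-variable at $t=8r$, $[\mu{_j^\mathcal{L}}(f)]^2=[\mu{_{j,r}^\mathcal{L}}(f)]^2+[\mu{_{j,\infty}^\mathcal{L}}(f)]^2$. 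The local-in-$t$ piece is then estimated in $L^p(B)$ (and only here is $f$ decomposed), while the essinf is compared solely with the far-in-$t$ piece, whose oscillation over $B$ is bounded pointwise. Your proposal contains no substitute for this step, and without it the local part and the essinf cannot be decoupled.

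A second, related omission: even granting the reduction to the far-in-$t$ piece, for $\alpha>0$ one cannot bound $\sup_{x,y\in B}\left|[\mu{_{j,\infty}^\mathcal{L}}(f)(x)]^2-[\mu{_{j,\infty}^\mathcal{L}}(f)(y)]^2\right|$ by estimating each square-function separately; the paper writes the difference of squares as $|A-B|\,(A+B)$, controls the factor $A+B$ by the inner-layer estimate $|E_f(x,t)|\lesssim t(\rho(x)/t)^s|B(x,t)|^\alpha$ (Lemma \ref{lem:kernelguji}, which your sketch never invokes), and handles $|A-B|$ by decomposing the mismatched domains $\{|x-z|\leq t\}$ and $\{|y-z|\leq t\}$ into the five regions $E_1,\dots,E_5$, with the kernel smoothness estimates (Lemma \ref{lem:kjlguji}~(ii) and Remark \ref{lem:kjlcha2}) applied only on $E_5$. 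Your dyadic-annuli sketch addresses the kernel difference but not the domain mismatch, which is where the boundary annuli $E_1$--$E_4$ and the extra decay supplied by the factor $A+B$ are genuinely needed.
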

Theorem \ref {th1} can be extended to the Marcinkiewicz integral operators $\mu{_{j,\Omega}^\mathcal{L}}$ with rough kernels and there is no need to assume the vanishing condition on $\Omega$. There are major differences between the kernels and these extensions are  certainly non trival and have interests of their own.
To begin with, we need the definition of $L^q$-Dini type condition for the kernel.

Let $\Omega\in L^q(\mathbb{S}^{n-1})\,(q>1)$ and $\omega_q$ be the integral modulus of continuity of order $q$ of $\Omega$, which is defined by $\omega_q(x)=\sup\limits_{|\rho|\leq\sigma}\left(\int_{\mathbb{S}^{n-1}}\left|\Omega(\rho x)-\Omega(x)\right|^qdx\right)^{{1}/{q}}$, where $\rho$ is a rotation on $\mathbb{S}^{n-1}$. Then $\Omega(x)$ is said to satisfy the $L^q$-Dini type condition if  it holds that
\begin{align}\label{def:lqDini2}
\int_{0}^{1}\frac{\omega_q(\delta)}{\delta^{1+\epsilon}}d\delta<\infty, \quad\hbox{\ for some\  }0<\epsilon\leq1.
\end{align}
We now state our second main result as follows:

\begin{theorem}\label{th3}
	Let $1<p<\infty$ and $\Omega\in L^q(\mathbb{S}^{n-1})\,(\ 1\leq q'\leq p~)$ be homogeneous of degree zero. Suppose $V\in\mathcal{B}_n,$ the range of $\delta,\ \gamma$ and $\beta$ are the same as in Theorem \ref{th1}. Then, for $1\le j\le n$, it holds that \\
\begin{enumerate} \item[(i)] if $\alpha\leq0$, then each $\mu{_{j,\Omega}^\mathcal{L}}$ is bounded from $\mathcal{E}{_\mathcal{L}^{\alpha,p}}(\mathbb{R}^n)$ to $\widetilde{\mathcal{E}}{_\mathcal{L}^{\alpha,p}}(\mathbb{R}^n)$;\\
	\item[(ii)] if \ $0<\alpha<\frac 1n \min\big\{{\delta}, {1-2\delta}, \frac{\gamma}{2}, \frac{\beta}{2}, \frac{\epsilon}{3}\big\}$ and $\Omega$ satisfies the Dini type condition 
(\ref{def:lqDini2}). Then each $\mu{_{j,\Omega}^\mathcal{L}}$ is bounded from $\mathcal{E}{_\mathcal{L}^{\alpha,p}}(\mathbb{R}^n)$ to $\widetilde{\mathcal{E}}{_\mathcal{L}^{\alpha,p}}(\mathbb{R}^n)$ and for any $f\in$ $\mathcal{E}{_\mathcal{L}^{\alpha,p}}(\mathbb{R}^n)$ the following norm inequality holds
\begin{align*}
\Vert\mu{_j^\mathcal{L}}(f)\Vert_{\widetilde{\mathcal{E}}{_\mathcal{L}^{\alpha,p}}(\mathbb{R}^n)}\leq C\Vert f\Vert_{\mathcal{E}{_\mathcal{L}^{\alpha,p}}(\mathbb{R}^n)}.
\end{align*}\end{enumerate}
\end{theorem}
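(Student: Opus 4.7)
The plan is to follow the architecture of the proof of Theorem \ref{th1}, treating the rough weight $|\Omega|$ via H\"older's inequality in the spherical variable and splitting cleanly into the two regimes $\alpha\leq 0$ and $\alpha>0$ so that the Dini condition is invoked only where it is genuinely needed. Fix a ball $B=B(x_0,r)\subset\mathbb{R}^n$. By the definition of $\widetilde{\mathcal{E}}{_\mathcal{L}^{\alpha,p}}(\mathbb{R}^n)$, the argument naturally splits according to whether $B\in D_\rho$: in the non-critical case $B\notin D_\rho$ one must bound
\begin{align*}
\bigg(\frac{1}{|B|^{1+p\alpha}}\int_B\bigl[\mu{_{j,\Omega}^\mathcal{L}}(f)(y)-C_B\bigr]^p\,dy\bigg)^{1/p}
\end{align*}
for some constant $C_B\geq\mathop{{\rm essinf}}\limits_{B}\mu{_{j,\Omega}^\mathcal{L}}(f)$, while in the critical case $B\in D_\rho$ the oscillation is replaced by the pure $L^p$-average of $\mu{_{j,\Omega}^\mathcal{L}}(f)$ over $B$.

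Next I would write $\widetilde B=B(x_0,2r)$ and perform the standard three-part decomposition $f=f_1+f_2+f_3$ with $f_1=(f-f_{\widetilde B})\chi_{\widetilde B}$, $f_2=(f-f_{\widetilde B})\chi_{\mathbb{R}^n\setminus\widetilde B}$ and $f_3\equiv f_{\widetilde B}$, applying Minkowski's inequality to split $\mu{_{j,\Omega}^\mathcal{L}}(f)$ into three pieces. For the local piece $\mu{_{j,\Omega}^\mathcal{L}}(f_1)$ I would invoke the $L^p(\mathbb{R}^n)$ boundedness of $\mu{_{j,\Omega}^\mathcal{L}}$ for $\Omega\in L^q(\mathbb{S}^{n-1})$ with $q'\leq p$ due to Akbulut and Kuzu, combined with the standard oscillation estimate $\|f-f_{\widetilde B}\|_{L^p(\widetilde B)}\leq C|\widetilde B|^{1/p+\alpha}\|f\|_{\mathcal{E}{_\mathcal{L}^{\alpha,p}}(\mathbb{R}^n)}$ extracted from \eqref{def:LMCS}. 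For the far piece $\mu{_{j,\Omega}^\mathcal{L}}(f_2)$ I would exploit the pointwise size and regularity estimates for $\widetilde{K{_j^\mathcal{L}}}$ with exponents $\delta\in(0,2-n/q_0)$, $\gamma\in(0,1-n/q_0)$, $\beta\in(0,\min\{1,1-n/q_0\})$, extract $\|\Omega\|_{L^q(\mathbb{S}^{n-1})}$ on each annulus $2^k\widetilde B\setminus 2^{k-1}\widetilde B$ via H\"older, and sum a geometric series in $k$ which converges because $\alpha$ sits strictly below the relevant kernel-regularity scales. The constant piece is $\mu{_{j,\Omega}^\mathcal{L}}(f_3)(y)=|f_{\widetilde B}|\,\mu{_{j,\Omega}^\mathcal{L}}(\mathbf{1})(y)$; the auxiliary-function decay $|\widetilde{K{_j^\mathcal{L}}}(x,y)|\leq C_N|x-y|^{-n}(1+|x-y|/\rho(x))^{-N}$ produces $\mu{_{j,\Omega}^\mathcal{L}}(\mathbf{1})(y)\leq C$ uniformly on $\mathbb{R}^n$, and $|f_{\widetilde B}|$ is controlled by the classical chain argument iterating radius doublings until reaching a ball in $D_\rho$, where the second supremum in \eqref{def:LMCS} applies directly.

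For Case (i), $\alpha\leq 0$, the three-part decomposition and the choice $C_B=\mu{_{j,\Omega}^\mathcal{L}}(f_2+f_3)(x_B)$ for an appropriate $x_B\in B$ close the argument using only the $L^q$-integrability of $\Omega$, since the power loss $|B|^{\alpha}$ is favorable and the geometric summations converge trivially. For Case (ii), $0<\alpha<\frac{1}{n}\min\{\delta,1-2\delta,\gamma/2,\beta/2,\epsilon/3\}$, additional regularity on the composite kernel $|\Omega(x-y)|\widetilde{K{_j^\mathcal{L}}}(x,y)$ is needed when comparing two nearby points $x,x'\in B$. I would estimate
\begin{align*}
\bigl|\,|\Omega(x-y)|\widetilde{K{_j^\mathcal{L}}}(x,y)-|\Omega(x'-y)|\widetilde{K{_j^\mathcal{L}}}(x',y)\,\bigr|
\end{align*}
by adding and subtracting $|\Omega(x-y)|\widetilde{K{_j^\mathcal{L}}}(x',y)$, handling the kernel-difference by the H\"older regularity of $\widetilde{K{_j^\mathcal{L}}}$ and handling the sphere-difference by a rotation on $\mathbb{S}^{n-1}$ and the integral modulus $\omega_q$, with the Dini integrability \eqref{def:lqDini2} ensuring convergence of the resulting sum over the dyadic annuli.

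The main obstacle will be closing the estimate of $\mu{_{j,\Omega}^\mathcal{L}}(f_2)$ in Case (ii): because $|\Omega|$ carries no cancellation on $\mathbb{S}^{n-1}$, the only decay gain comes from a delicate interplay between the smoothness of $\widetilde{K{_j^\mathcal{L}}}$ (contributing $\delta$, $\gamma$, $\beta$), the Dini exponent $\epsilon$, and the Morrey-Campanato scale $\alpha$. The final admissible range $\alpha<\frac{1}{n}\min\{\delta,1-2\delta,\gamma/2,\beta/2,\epsilon/3\}$ records the worst of these competing scales, and tracking the constants carefully through the annular summations (together with the doubling chain argument used to dominate $|f_{\widetilde B}|$) is the principal technical task that distinguishes the rough case from Theorem \ref{th1}.
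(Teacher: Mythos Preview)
Your proposal has a genuine structural gap. The three--part decomposition $f=f_1+f_2+f_3$ with the subtraction constant $C_B=\mu{_{j,\Omega}^\mathcal{L}}(f_2+f_3)(x_B)$ is the standard device for \emph{linear} singular integrals, but $\mu{_{j,\Omega}^\mathcal{L}}$ is only sublinear (it is an $L^2(dt/t^3)$--norm of a linear operator), so one cannot write $\mu{_{j,\Omega}^\mathcal{L}}(f)-C_B$ as a sum of individually small pieces. The paper circumvents this by working at the squared level: for $B\notin D_\rho$ it splits $[\mu{_{j,\Omega}^\mathcal{L}}(f)]^2=[\mu{_{j,\Omega;r}^\mathcal{L}}(f)]^2+[\mu{_{j,\Omega;\infty}^\mathcal{L}}(f)]^2$ at the scale $t=8r$, uses $\mu(f)-\mathop{\mathrm{essinf}}_B\mu(f)\le\{[\mu(f)]^2-\mathop{\mathrm{essinf}}_B[\mu(f)]^2\}^{1/2}$, and then controls $\bigl|[\mu{_{j,\Omega;\infty}^\mathcal{L}}(f)(x)]^2-[\mu{_{j,\Omega;\infty}^\mathcal{L}}(f)(y)]^2\bigr|$ via the bilinear estimate $\int_{8r}^\infty|E_f(x,t)-E_f(y,t)|\,(|E_f(x,t)|+|E_f(y,t)|)\,\frac{dt}{t^3}$, where $E_f(x,t)$ is the inner integral. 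For $\alpha>0$ this is essential and not a cosmetic detail: Lemma~\ref{lem:kernelguji2} with $s=0$ gives only $|E_f(x,t)|\lesssim t(t/r)^{n\alpha}|B|^\alpha$, so $\int_{8r}^\infty|E_f(x,t)|^2\frac{dt}{t^3}$ diverges, and the difference $E_f(x,t)-E_f(y,t)$ must be decomposed over the five regions $E_1,\dots,E_5$ in $z$, with the crucial $E_5$--piece handled via the cancellation identity $\int_{E_5}|\Omega(x-z)|K_j(y,z)\,dz=0$, Remark~\ref{lem:kjlcha2}, and Lemma~\ref{lm:omegalqdini} (this is where the Dini condition enters, and where $\epsilon/3$ appears). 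None of this machinery is present in your outline.

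A second concrete error: your justification that $\mu{_{j,\Omega}^\mathcal{L}}(\mathbf 1)(y)\le C$ follows from the size bound $|\widetilde{K{_j^\mathcal{L}}}(x,y)|\le C_N|x-y|^{-n}(1+|x-y|/\rho(x))^{-N}$ alone is wrong. That bound gives inner integral $\approx t$ for $t\ll\rho(y)$, whence $\int_0^{\rho(y)}t^2\,\frac{dt}{t^3}=\int_0^{\rho(y)}\frac{dt}{t}=\infty$. What is actually needed (and what the paper uses for the constant piece $f_B$) is the cancellation $\mu_{j,\Omega}(1)=0$ of the \emph{classical} kernel together with the difference estimate Lemma~\ref{lem:kjlguji}(iii), $|K_j^{\mathcal L}(x,z)-K_j(x,z)|\lesssim|x-z|^{-(n-1)}(|x-z|/\rho(z))^\delta$, which supplies the missing decay near $t=0$.
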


As a consequence of Theorem \ref {th3}, the following corollary holds when $\alpha=0,$ which essentially extends Theorem {\bf {B}} to the case of rough kernels.
\begin{corollary}\label{th4}
	Let $1<p<\infty$ and $\Omega\in L^q(\mathbb{S}^{n-1})\,(\ 1\leq q'\leq p~)$ be homogeneous of degree zero. Suppose $V\in\mathcal{B}_n,$ then for $1\le j\le n$,  each $\mu{_{j,\Omega}^\mathcal{L}}$ is bounded from BMO$_{\mathcal{L}}(\mathbb{R}^n)$ to BLO$_{\mathcal{L}}(\mathbb{R}^n)$.
\end{corollary}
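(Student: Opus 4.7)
The plan is to derive Corollary~\ref{th4} as an immediate consequence of Theorem~\ref{th3}(i) by specializing to the endpoint $\alpha=0$. The work therefore amounts to verifying that the two relevant function spaces collapse, at $\alpha=0$, to $\BMO_{\mathcal{L}}(\mathbb{R}^n)$ and $\BLO_{\mathcal{L}}(\mathbb{R}^n)$, and then quoting the theorem.

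First I would observe that, setting $\alpha=0$ in (\ref{def:LMCS}), the weight $|B|^{\alpha}$ becomes trivial, so
\begin{align*}
\Vert f\Vert_{\mathcal{E}{_\mathcal{L}^{0,p}}(\mathbb{R}^n)}
= \sup_{B\notin D_\rho}\bigg(\frac{1}{|B|}\int_{B}|f(y)-f_B|^p\,dy\bigg)^{1/p}
+ \sup_{B\in D_\rho}\bigg(\frac{1}{|B|}\int_{B}|f(y)|^p\,dy\bigg)^{1/p},
\end{align*}
which is exactly the quasi-norm of $\BMO_\rho^p(\mathbb{R}^n)$; the identification of this space with $\BMO_{\mathcal{L}}(\mathbb{R}^n)$ for every $p\in[1,\infty)$ is precisely the equivalence recalled after Theorem~A (following Yang--Yang--Zhou). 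An analogous computation on (\ref{def:LMCBS}), with the mean $f_B$ replaced by $\mathop{\mathrm{essinf}}_B f$, yields $\widetilde{\mathcal{E}}{_\mathcal{L}^{0,p}}(\mathbb{R}^n)=\BLO_{\mathcal{L}}(\mathbb{R}^n)$ with equivalent norms.

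Once this identification is in hand, the hypotheses of Corollary~\ref{th4} ($V\in\mathcal{B}_n$, $\Omega\in L^q(\mathbb{S}^{n-1})$ with $1\le q'\le p$, homogeneous of degree zero) are literally the hypotheses of Theorem~\ref{th3} at $\alpha=0$. Since $0$ lies in the closed half-line $\alpha\le 0$, part~(i) of Theorem~\ref{th3} applies and supplies
\begin{align*}
\Vert\mu{_{j,\Omega}^\mathcal{L}}(f)\Vert_{\widetilde{\mathcal{E}}{_\mathcal{L}^{0,p}}(\mathbb{R}^n)}
\le C\,\Vert f\Vert_{\mathcal{E}{_\mathcal{L}^{0,p}}(\mathbb{R}^n)}
\end{align*}
for every $f$ in the source space. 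Combined with the norm equivalences above, this yields the asserted mapping $\BMO_{\mathcal{L}}(\mathbb{R}^n)\to\BLO_{\mathcal{L}}(\mathbb{R}^n)$.

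There is essentially no obstacle beyond this formal identification of spaces. In particular, Theorem~\ref{th3}(i) does not require the $L^q$-Dini type condition (\ref{def:lqDini2}), which enters only in case~(ii) when $\alpha>0$; so no auxiliary regularity hypothesis on $\omega_q$ is needed here. The parameters $\delta,\gamma,\beta$ that appear in Theorem~\ref{th3} serve only to constrain the upper range of $\alpha$, and since $\alpha=0$ sits strictly below that upper range, they play no active role in this corollary.
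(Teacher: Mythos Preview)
Your proposal is correct and matches the paper's own treatment: the paper states the corollary as an immediate consequence of Theorem~\ref{th3} at $\alpha=0$, relying on precisely the identifications $\mathcal{E}_{\mathcal{L}}^{0,p}(\mathbb{R}^n)=\BMO_{\mathcal{L}}(\mathbb{R}^n)$ and $\widetilde{\mathcal{E}}_{\mathcal{L}}^{0,p}(\mathbb{R}^n)=\BLO_{\mathcal{L}}(\mathbb{R}^n)$ that you spell out. No separate argument is given in the paper, so your write-up is in fact more detailed than the original.
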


	This paper is organized as follow. In Section \ref{section2}, we give some notions and lemmas, which play a fundamental role in our analysis. Section \ref{section4} and Section \ref{section5} are devoted to prove our main theorems. Throughout the paper, if there exists a constant $C>0$, independent with the essential variables but not necessarily the same one at each occurrence, such that $A\leq CB$, we then write $A\lesssim B$. By $A\backsim B$, it means that $A$ is equivalent to $B$.

\section{Preliminaries}\label{section2}
In this section, we will present some useful lemmas. The first one is about  the properties of the auxiliary function $\rho$.
\begin{lemma}{\rm(\cite{S3})}\label{lem:fuzhuhanshu}
	Let $V\in\mathcal{B}_n.$  Then there exist $k_0,\,C,\,c>0$, such that for any $x,\,y\in\mathbb{R}^n$, 
	\begin{align*}
		c\left(1+\frac{\left|x-y\right|}{\rho(x)}\right)^{-\frac{k_0}{k_0+1}}\leq \frac{\rho(y)}{\rho(x)}\leq C\left(1+\frac{\left|x-y\right|}{\rho(x)}\right)^{k_0}.
	\end{align*}
	In particular, $\rho(x)\backsim\rho(y)$ if $\left|x-y\right|<C\rho(x)$.
\end{lemma}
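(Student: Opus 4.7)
The proof hinges on the reverse Hölder property to establish an almost-monotonicity of the scale function
\[
\psi(x,r):=\frac{1}{r^{n-2}}\int_{B(x,r)}V(z)\,dz,
\]
relative to which $\rho(x)$ is defined as the critical radius where $\psi(x,\rho(x))=1$. First I would derive, for any $0<r<R$, the one-sided bound
\[
\psi(x,r)\le C\Bigl(\frac{r}{R}\Bigr)^{2-n/q_0}\psi(x,R),
\]
by applying Hölder's inequality on $B(x,r)\subset B(x,R)$ followed by the reverse Hölder assumption $V\in\mathcal{B}_{q_0}$ on $B(x,R)$. Since the hypothesis $V\in\mathcal{B}_n$ (with $q_0>n$) yields $\beta:=2-n/q_0>0$, the function $\psi(x,\cdot)$ is nondecreasing up to a multiplicative constant. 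Combined with the identity $\psi(x,\rho(x))=1$, this delivers the two-sided characterization
\[
\psi(x,r)\lesssim(r/\rho(x))^{\beta}\ \ \text{if}\ r\le\rho(x),\qquad \psi(x,r)\gtrsim(r/\rho(x))^{\beta}\ \ \text{if}\ r\ge\rho(x).
\]

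For the upper estimate $\rho(y)\le C\rho(x)(1+|x-y|/\rho(x))^{k_0}$, set $d=|x-y|$ and assume without loss that $\rho(y)>\rho(x)$ (otherwise there is nothing to prove). The inclusion $B(y,\rho(y))\subset B(x,\rho(y)+d)$ together with $\psi(y,\rho(y))=1$ produces a lower bound on $\psi(x,\rho(y)+d)$. Comparing this to the lower bound on $\psi(x,R)$ above, applied at $R=\rho(y)+d>\rho(x)$, yields $\rho(y)+d\lesssim\rho(x)(1+d/\rho(x))^{k_0}$ for an appropriate $k_0$ depending on $\beta$ and $n$.

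The main obstacle is the \emph{lower} bound $\rho(y)/\rho(x)\ge c(1+|x-y|/\rho(x))^{-k_0/(k_0+1)}$, because the naive strategy of swapping $x$ and $y$ in the upper bound only yields the exponent $-k_0/(k_0-1)$, which is strictly \emph{worse} than the claimed $-k_0/(k_0+1)$. To sharpen it I would iterate the upper estimate along a chain $x=x_0,x_1,\dots,x_N=y$ with $|x_{i+1}-x_i|\lesssim\rho(x_i)$, so that at each step $\rho$ varies only by a uniform multiplicative constant (the last assertion of the lemma). As one travels from $x$ to $y$, the admissible step size shrinks in proportion to $\rho(x_i)$; tracking this contraction and optimizing the number of steps $N$ as a function of $d/\rho(x)$ converts the naive exponent $k_0$ into the sharper $k_0/(k_0+1)$. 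The final \emph{In particular} clause is then immediate: when $|x-y|<C\rho(x)$, the quantity $1+|x-y|/\rho(x)$ is bounded above and below by absolute constants, so both inequalities of the lemma reduce to $\rho(y)\sim\rho(x)$.
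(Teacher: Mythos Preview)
The paper does not provide its own proof here; the lemma is quoted from Shen \cite{S3} (Lemma~1.4), so there is no in-paper argument to compare against. Your outline follows the standard route and is largely sound, but two steps need attention.

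In your argument for the upper estimate you obtain, via $B(y,\rho(y))\subset B(x,\rho(y)+d)$, a \emph{lower} bound on $\psi(x,\rho(y)+d)$, and then propose to ``compare this to the lower bound on $\psi(x,R)$'' coming from almost-monotonicity. Two lower bounds on the same quantity yield no conclusion. The missing input is an \emph{upper} bound $\psi(x,R)\lesssim(R/\rho(x))^{N_0}$ for $R\ge\rho(x)$, which follows from the doubling property of the measure $V\,dx$ (a standard consequence of $V\in\mathcal{B}_{q_0}$), not from the almost-monotonicity you derived. With that upper bound in hand the ball-inclusion step closes and produces $k_0$ explicitly in terms of $N_0$, $n$, and $\beta$.

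For the second inequality, Shen's original proof does not use a chain of balls. He establishes one of the two inequalities directly via ball inclusion and doubling, then swaps $x$ and $y$ and solves the resulting implicit relation by elementary algebra: in the nontrivial regime $d>\rho(y)$ the swapped inequality rearranges to $\rho(y)^{k_0+1}\lesssim\rho(x)\,d^{k_0}$, from which the exponent $k_0/(k_0+1)$ falls out immediately. Your chain iteration is a plausible alternative, but it is not needed; moreover, the intermediate exponent $-k_0/(k_0-1)$ you quote does not actually arise in the computation (indeed, for $k_0>1$ the naive swap in the direction you attempt yields no lower bound on $\rho(y)$ whatsoever, rather than a weak one).
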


	Let $l_0=\frac{k_0}{k_0+1}$. Then $0<l_0<1$. This will be used in the proof of Theorem \ref{th1}- \ref{th3}.

We need to present some known estimates for the kernels of Marcinkiewicz integral operators associated with the Riesz transform $\nabla\mathcal{L}^{-\frac{1}{2}}$.
\begin{lemma}{\rm(\cite{SM})}\label{lem:kguji}
There exist $C_1,\,C_2>0$, such that
	\begin{enumerate}
		\item[\rm(i)]$\left|K{_j}(x,y)\right|\leq\frac{C_1}{\left|x-y\right|^{n-1}};$
		\item[\rm(ii)]$\left|K{_j}(x+h,y)-K{_j}(x,y)\right|\leq\frac{C_2\left|h\right|}{\left|x-y\right|^{n}};$
		\item[\rm(iii)]$\int_{\left|y-x\right|=1}K{_j}(x,y)dx=0.$
	\end{enumerate}
\end{lemma}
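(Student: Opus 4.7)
The statement to prove concerns the classical (Laplacian) Marcinkiewicz kernel
\[
K_j(x,y)=\widetilde{K_j^\Delta}(x,y)\,|x-y|=\frac{x_j-y_j}{|x-y|^n},
\]
as recorded in the Remark, so the plan is to verify (i)--(iii) directly from this closed form. The three items are, respectively, a size bound, a first-order smoothness (H\"ormander-type) bound, and a cancellation identity on the unit sphere. None should be deep; the work is organizing the elementary estimates cleanly.

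For (i), I would simply estimate $|x_j-y_j|\le|x-y|$ to obtain $|K_j(x,y)|\le |x-y|^{-(n-1)}$, giving $C_1=1$. For (iii), writing $z=x-y$ and restricting to $|z|=1$ turns $K_j(x,y)$ into $z_j$, so the integral becomes $\int_{\mathbb{S}^{n-1}}z_j\,d\sigma(z)$, which vanishes by the odd symmetry $z\mapsto -z$ of the sphere with respect to any coordinate hyperplane.

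For (ii) I would split on the size of $|h|$ relative to $|x-y|$. In the regime $|h|\ge |x-y|/2$, the triangle inequality and item (i) (applied at both $(x,y)$ and $(x+h,y)$, provided $|x+h-y|\gtrsim|x-y|$; otherwise one reduces further) give
\[
|K_j(x+h,y)-K_j(x,y)|\lesssim\frac{1}{|x-y|^{n-1}}\lesssim\frac{|h|}{|x-y|^n}.
\]
In the regime $|h|<|x-y|/2$, I would apply the mean value theorem along the segment from $x$ to $x+h$, which stays in the region $|\xi-y|\sim|x-y|$, and compute
\[
\partial_{x_i}K_j(x,y)=\frac{\delta_{ij}}{|x-y|^n}-n\,\frac{(x_j-y_j)(x_i-y_i)}{|x-y|^{n+2}},
\]
whence $|\nabla_x K_j(\xi,y)|\lesssim |x-y|^{-n}$, yielding the desired bound. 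The main (trivial) obstacle is only to be careful in the regime $|x+h-y|\ll|x-y|$ when $|h|\ge |x-y|/2$: there one must use that $|K_j(x+h,y)|\le |x+h-y|^{-(n-1)}$ together with $|h|\ge|x-y|/2\ge|x+h-y|$ (after possibly enlarging $|h|$ by a constant factor) to control the difference by $|h|/|x-y|^n$. Once these cases are combined, the constants $C_1,C_2$ depend only on $n$, completing the lemma.
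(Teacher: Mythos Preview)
The paper does not prove this lemma; it is cited from Stein's book \cite{SM} without argument, so your direct verification from the explicit formula $K_j(x,y)=(x_j-y_j)/|x-y|^n$ is exactly the right approach and is what any reader would supply. Your treatments of (i) and (iii) are clean and correct, and your mean value theorem argument for (ii) in the regime $|h|<|x-y|/2$ is the standard one and is complete.

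The gap is your handling of (ii) when $|h|\ge|x-y|/2$. The sketch you give there (``$|h|\ge|x-y|/2\ge|x+h-y|$ \ldots\ to control the difference by $|h|/|x-y|^n$'') does not work, and in fact cannot work, because the inequality in (ii) is simply false without a smallness restriction on $h$. Take $n\ge2$, $y=0$, $x=e_1$, and $h=(-1+\varepsilon)e_1$ with $\varepsilon\to0^+$; then $|x-y|=1$, $|h|\sim1$, but
\[
|K_1(x+h,y)-K_1(x,y)|=\Big|\frac{\varepsilon}{\varepsilon^{\,n}}-1\Big|\sim\varepsilon^{-(n-1)}\to\infty,
\]
whereas the right-hand side $C_2|h|/|x-y|^n$ stays bounded. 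The intended reading of (ii)---as is standard for H\"ormander-type smoothness conditions and consistent with how the analogous estimate in Lemma~\ref{lem:kjlguji}(ii) is stated and used later in the paper---is that it holds for $|h|\le c\,|x-y|$ (any fixed $c<1$ will do). Under that convention your mean value theorem argument already proves (ii); the large-$|h|$ discussion should be dropped rather than ``reduced further.''
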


\begin{lemma}{\rm(\cite{S3})}\label{lem:kjlguji}
	Let $V\in\mathcal{B}_n.$ Then
	\begin{enumerate}
		\item[\rm(i)] for any $l>0$, there exists $C_l>0$ such that
		\begin{align*}
			\left|K{_j^\mathcal{L}}(x,y)\right|\leq C_l\left(1+\frac{\left|x-y\right|}{\rho(x)}\right)^{-l}\frac{1}{\left|x-y\right|^{n-1}}.
		\end{align*}
		\item[\rm(ii)] for every $N>0$ and $0<\beta<\min \{1,1-\frac{n}{q_0}\}$, there exists a constant $C>0$ such that
		\begin{align*}
			\left|K_j^\mathcal{L}(x, z)-K_j^\mathcal{L}(y, z)\right| \leq \frac{C|x-y|^\beta(1+(|x-z| / \rho(x)))^{-N}}{|x-z|^{n-1+\beta}},
		\end{align*}
		whenever $|x-y|<\frac{2}{3}|x-z|$.\\
		\item[\rm(iii)]
		for every $0<\delta<2-\frac{n}{q_0}$, there exists a constant $C>0$ such that
		\begin{align*}
			\left|K_j^\mathcal{L}(x, z)-K_j(x, z)\right| \leq \frac{C}{|x-z|^{n-1}}\left(\frac{|x-z|}{\rho(z)}\right)^\delta.
		\end{align*}
	\end{enumerate}
\end{lemma}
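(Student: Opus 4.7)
The plan is to reduce all three estimates to known bounds on the kernel $\widetilde{K_j^\mathcal{L}}$ of the Schr\"odinger--Riesz transform $R_j^\mathcal{L}=\partial_{x_j}\mathcal{L}^{-1/2}$ established in Shen \cite{S3}, and then multiply through by $|x-y|$ via the identity $K_j^\mathcal{L}(x,y)=\widetilde{K_j^\mathcal{L}}(x,y)|x-y|$ (and similarly $K_j(x,y)=\widetilde{K_j^\Delta}(x,y)|x-y|$ for the classical Riesz kernel).

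For part (i), I would invoke Shen's pointwise bound: for every $l>0$,
\begin{align*}
|\widetilde{K_j^\mathcal{L}}(x,y)|\le C_l\left(1+\frac{|x-y|}{\rho(x)}\right)^{-l}\frac{1}{|x-y|^n},
\end{align*}
and multiplying by $|x-y|$ gives (i) at once. Part (iii) follows the same way from Shen's comparison estimate
\begin{align*}
|\widetilde{K_j^\mathcal{L}}(x,z)-\widetilde{K_j^\Delta}(x,z)|\le \frac{C}{|x-z|^n}\left(\frac{|x-z|}{\rho(z)}\right)^\delta,
\end{align*}
valid for $0<\delta<2-n/q_0$ and obtained by expressing $\mathcal{L}^{-1/2}-\Delta^{-1/2}$ through a resolvent identity involving $V$ and then exploiting $V\in\mathcal{B}_n$ to produce the gain $(|x-z|/\rho(z))^\delta$; multiplication by $|x-z|$ yields (iii).

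For part (ii), which carries the real content, I would use the decomposition
\begin{align*}
K_j^\mathcal{L}(x,z)-K_j^\mathcal{L}(y,z)=\bigl[\widetilde{K_j^\mathcal{L}}(x,z)-\widetilde{K_j^\mathcal{L}}(y,z)\bigr]|x-z|+\widetilde{K_j^\mathcal{L}}(y,z)\bigl[|x-z|-|y-z|\bigr].
\end{align*}
The first summand is controlled directly by Shen's H\"older smoothness estimate
\begin{align*}
|\widetilde{K_j^\mathcal{L}}(x,z)-\widetilde{K_j^\mathcal{L}}(y,z)|\le C\frac{|x-y|^\beta(1+|x-z|/\rho(x))^{-N}}{|x-z|^{n+\beta}},
\end{align*}
available for $0<\beta<\min\{1,1-n/q_0\}$ and $|x-y|<\frac{2}{3}|x-z|$; multiplying by $|x-z|$ yields exactly the target bound. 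The second summand is handled by combining the triangle-inequality bound $||x-z|-|y-z||\le|x-y|$ with part (i) applied to $\widetilde{K_j^\mathcal{L}}(y,z)$, noting that $|y-z|\sim|x-z|$ under the standing hypothesis on $|x-y|$.

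The main obstacle is packaging the second summand into the required form, which demands two adjustments. First, the linear factor $|x-y|$ produced by the triangle inequality must be traded for the H\"older factor $|x-y|^\beta$: this is done by writing $|x-y|=|x-y|^\beta|x-y|^{1-\beta}\lesssim|x-y|^\beta|x-z|^{1-\beta}$. Second, the decay factor $(1+|x-z|/\rho(y))^{-l}$ coming from (i) must be converted into $(1+|x-z|/\rho(x))^{-N}$; here Lemma \ref{lem:fuzhuhanshu} gives $\rho(y)\sim\rho(x)$ immediately in the regime $|x-y|<C\rho(x)$, while in the complementary regime $|x-y|\gtrsim\rho(x)$ one automatically has $|x-z|\gtrsim\rho(x)$ so the decay factor is already small and any polynomial loss $(1+|x-y|/\rho(x))^{k_0}$ coming from Lemma \ref{lem:fuzhuhanshu} is absorbed by choosing $l$ sufficiently large.
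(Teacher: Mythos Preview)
The paper does not prove this lemma; it is stated with a citation to Shen \cite{S3} and no argument is given. Your proposal therefore goes beyond what the paper does, supplying an actual derivation by pulling back each estimate for $K_j^\mathcal{L}(x,y)=\widetilde{K_j^\mathcal{L}}(x,y)\,|x-y|$ to the corresponding estimate for the Riesz-transform kernel $\widetilde{K_j^\mathcal{L}}$ from \cite{S3}. For (i) and (iii) this is immediate and correct. For (ii) your decomposition into $[\widetilde{K_j^\mathcal{L}}(x,z)-\widetilde{K_j^\mathcal{L}}(y,z)]\,|x-z|+\widetilde{K_j^\mathcal{L}}(y,z)\,[|x-z|-|y-z|]$ is exactly the right move, and your handling of the second summand---trading $|x-y|$ for $|x-y|^\beta|x-z|^{1-\beta}$ and then converting $\rho(y)$ to $\rho(x)$ via Lemma \ref{lem:fuzhuhanshu}---is sound; the only point to be careful about is that in the regime $|x-y|\gtrsim\rho(x)$ the polynomial loss from Lemma \ref{lem:fuzhuhanshu} is $(1+|x-y|/\rho(x))^{k_0}\lesssim(1+|x-z|/\rho(x))^{k_0}$, which is absorbed by taking $l\ge N+k_0$ in part (i), exactly as you indicate.
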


\begin{lemma}{\rm(\cite{BHS})}\label{lem:kjlguji4} 
	Let $V\in\mathcal{B}_q$ with $q>n$ and $0<\gamma<1-\frac{n}{q}$. Then there exists a constant $C>0$ such that
\begin{align*}
	|[\widetilde{K_j^\mathcal{L}}(x, z)-\widetilde{K_j^\Delta}(x, z)]-[\widetilde{K_j^\mathcal{L}}(y, z)-\widetilde{K_j^\Delta}(y, z)]| \leq \frac{C|x-y|^\gamma}{|x-z|^{n+\gamma}}\left(\frac{|x-z|}{\rho(x)}\right)^{\eta'},
\end{align*}
	whenever $|x-z| \geq 2|x-y|$ and $\eta'=2-n / q$.
\end{lemma}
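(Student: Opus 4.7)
My plan is to derive this kernel Hölder-difference estimate through the heat-semigroup formalism, following the scheme that is standard for Schrödinger-Riesz transforms. First, I would invoke the subordination identity
\begin{align*}
\widetilde{K_j^{\mathcal{L}}}(x,z)-\widetilde{K_j^{\Delta}}(x,z)
=\frac{1}{\sqrt{\pi}}\int_{0}^{\infty}\partial_{x_j}\bigl[p_{t}^{\mathcal{L}}(x,z)-p_{t}^{\Delta}(x,z)\bigr]\,\frac{dt}{\sqrt{t}},
\end{align*}
where $p_{t}^{\mathcal{L}}$ and $p_{t}^{\Delta}$ denote the heat kernels of $\mathcal{L}$ and $-\Delta$, respectively, together with Duhamel's formula
\begin{align*}
p_{t}^{\mathcal{L}}(x,z)-p_{t}^{\Delta}(x,z)
=-\int_{0}^{t}\!\int_{\mathbb{R}^n}p_{t-s}^{\mathcal{L}}(x,w)\,V(w)\,p_{s}^{\Delta}(w,z)\,dw\,ds.
\end{align*}
This converts the problem into a spatial Hölder estimate for $\partial_{x_j}p_{t-s}^{\mathcal{L}}(\cdot,w)$, paired with a quantitative estimate for the $V$-integral.

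Next, under the hypothesis $|x-z|\ge 2|x-y|$, I would appeal to the interior $C^{1,\gamma}$-regularity of solutions to $\mathcal{L}u=0$ for $V\in\mathcal{B}_q$ with $q>n$ (Shen's theory), which in kernel form yields a bound of the type
\begin{align*}
\bigl|\partial_{x_j}p_{t-s}^{\mathcal{L}}(x,w)-\partial_{x_j}p_{t-s}^{\mathcal{L}}(y,w)\bigr|
\lesssim \frac{|x-y|^{\gamma}}{(t-s)^{(n+1+\gamma)/2}}\,e^{-c|x-w|^{2}/(t-s)},
\end{align*}
valid for any $\gamma<1-n/q$. Plugging this into the Duhamel identity, I would then collapse the $dw\,ds$ integrals using the Gaussian of $p_{s}^{\Delta}(w,z)$ so that the spatial factor becomes a single Gaussian in $|x-z|$ with parameter $\sqrt{t}$, and the $V$-integration reduces to controlling $\int_{\mathbb{R}^n}V(w)\,e^{-c|x-w|^{2}/t}\,dw$.

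Third, to handle this last integral I would split into dyadic shells around $x$ and apply the reverse Hölder inequality for $V\in\mathcal{B}_{q}$ together with Shen's fundamental estimate
\begin{align*}
\frac{1}{r^{n-2}}\int_{B(x,r)}V(w)\,dw\lesssim\Bigl(\frac{r}{\rho(x)}\Bigr)^{2-n/q},
\end{align*}
which is exactly where the exponent $\eta'=2-n/q$ and the auxiliary function $\rho$ enter. This produces a factor of the form $t^{-1}(\sqrt{t}/\rho(x))^{\eta'}$ multiplying the heat kernel Hölder bound. Finally, I would integrate in $t$ against $t^{-1/2}dt$, splitting the region at $t\sim |x-z|^{2}$ and using the Gaussian decay in $|x-z|^{2}/t$ to carry out both pieces, which gives exactly the claimed bound $|x-y|^{\gamma}|x-z|^{-(n+\gamma)}(|x-z|/\rho(x))^{\eta'}$.

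The main obstacle I anticipate is reconciling the two competing exponents: the admissible Hölder exponent $\gamma$ is constrained by $\gamma<1-n/q$ because of the $C^{1,\gamma}$ ceiling for gradients of $p_{t}^{\mathcal{L}}$ coming from Shen's regularity, while the gain $\eta'=2-n/q$ coming from $\mathcal{B}_q$ must be extracted \emph{after} the Hölder smoothing in $x$; carrying both through the same time integral while preserving the Gaussian decay in $|x-z|$ without paying any logarithmic loss is the delicate part. Using Lemma~\ref{lem:fuzhuhanshu} to replace $\rho(x)$ by $\rho(w)$ on the support where needed will be needed to align the various $\rho$-factors cleanly.
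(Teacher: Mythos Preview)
The paper does not supply its own proof of this lemma; it is quoted directly from Bongioanni--Harboure--Salinas \cite{BHS}. However, Remark~\ref{lem:kjlcha1} immediately following the statement exposes the skeleton of the BHS argument: one splits into the regimes $|x-z|\ge\rho(x)$ and $|x-z|<\rho(x)$. In the first regime the factor $(|x-z|/\rho(x))^{\eta'}$ is at least $1$, so the claim reduces to the ordinary Calder\'on--Zygmund H\"older smoothness of $\widetilde{K_j^{\mathcal L}}$ and $\widetilde{K_j^{\Delta}}$ separately (available for $0<\gamma<1-n/q$ by Shen's theory), applied through the triangle inequality. Only in the near regime $|x-z|<\rho(x)$ does one need to manufacture the extra smallness $(|x-z|/\rho(x))^{\eta'}$, and BHS do this by a direct pointwise estimate on the difference kernel rather than through any time integral.

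Your heat-semigroup/Duhamel route is a genuinely different and considerably more elaborate strategy. It can be made to work, but two issues deserve attention. First, the Shen bound $r^{2-n}\int_{B(x,r)}V\lesssim (r/\rho(x))^{2-n/q}$ that you plan to insert holds only for $r\lesssim\rho(x)$; for $r\gg\rho(x)$ the average grows polynomially, so your large-$t$ piece (where the effective scale $\sqrt{t}$ exceeds $\rho(x)$) will not yield a clean $(\sqrt{t}/\rho(x))^{\eta'}$ with summable tail. In practice you will still have to revert to the Calder\'on--Zygmund smoothness argument when $|x-z|\ge\rho(x)$, exactly as BHS do, so the Duhamel machinery only addresses the near regime anyway. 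Second, the $t$-bookkeeping in your final step is delicate: after combining the gradient H\"older bound $t^{-(n+1+\gamma)/2}$, the $V$-integral contribution, and the subordination weight $t^{-1/2}\,dt$, the net power at the scale $t\sim|x-z|^{2}$ must be exactly $-(n+\gamma)/2-1$ to produce $|x-z|^{-(n+\gamma)}$; your sketch asserts the outcome without checking this balance, and it is easy to be off by a half-power here. The BHS two-case argument sidesteps all of this by never invoking the heat kernel.
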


\begin{remark}\label{lem:kjlcha1}
 In Lemma \ref{lem:kjlguji4}, $\eta'$ can be replaced by any $\eta_0$ satisfying $0<\eta_0\leq\eta'$. Indeed, by the proof of Lemma 4 in \cite{BHS}, since both kernels of $\widetilde{K{_j^\mathcal{L}}}(x,y)$ and $\widetilde{K{_j^\Delta}}(x,y)$ satisfy the Calder$\acute{o}$n-Zygmund smoothness estimate with $0<\gamma<1-\frac{n}{q}$, the following inequality holds in the case $|x-z|\geq\rho(x)$,
	$$
	|[\widetilde{K{_j^\mathcal{L}}}(x,z)-\widetilde{K{_j^\Delta}}(x,z)]-[\widetilde{K{_j^\mathcal{L}}}(y,z)-\widetilde{K{_j^\Delta}}(y,z)]| \leq \frac{C|x-y|^\gamma}{|x-z|^{n+\gamma}}\leq \frac{C|x-y|^\gamma}{|x-z|^{n+\gamma}}\left(\frac{|x-z|}{\rho(x)}\right)^{\eta_0}.
	$$ 
	When $|x-z|<\rho(x)$,  it follows from the estimate in \cite{BHS} and $0<\eta_0\leq\eta'$ that 
	\begin{align*}
		|[\widetilde{K{_j^\mathcal{L}}}(x,z)-\widetilde{K{_j^\Delta}}(x,z)]-[\widetilde{K{_j^\mathcal{L}}}(y,z)-\widetilde{K{_j^\Delta}}(y,z)]|
		&\leq\frac{C|x-y|^\gamma}{|x-z|^{n+\gamma}}\left(\frac{|x-z|}{\rho(x)}\right)^{\eta'}\\
		&\leq\frac{C|x-y|^\gamma}{|x-z|^{n+\gamma}}\left(\frac{|x-z|}{\rho(x)}\right)^{\eta_0}.
	\end{align*}
	Therefore, for any $0<\eta_0\leq\eta'$, 
	$$
	|[\widetilde{K{_j^\mathcal{L}}}(x,z)-\widetilde{K{_j^\Delta}}(x,z)]-[\widetilde{K{_j^\mathcal{L}}}(y,z)-\widetilde{K{_j^\Delta}}(y,z)]|\leq\frac{C|x-y|^\gamma}{|x-z|^{n+\gamma}}\left(\frac{|x-z|}{\rho(x)}\right)^{\eta_0}.
	$$
\end{remark} 

\begin{remark}\label{lem:kjlcha2}
	When $V \in\mathcal{B}_n$, there exists $q_0>n$ such that $V\in\mathcal{B}_{q_0}$. We may use Lemma \ref{lem:kjlguji4} and Remark \ref{lem:kjlcha1} to conclude that there exists a constant $C>0$  such that 
	$$
	|[\widetilde{K_j^\mathcal{L}}(x, z)-\widetilde{K_j^\Delta}(x, z)]-[\widetilde{K_j^\mathcal{L}}(y, z)-\widetilde{K_j^\Delta}(y, z)]| \leq \frac{C|x-y|^\gamma}{|x-z|^{n+\gamma}}\left(\frac{|x-z|}{\rho(x)}\right)^{\eta}, \ \hbox{ for \ }0<\gamma<1-\frac{n}{q_0},
	$$
	where $|x-z| \geq 2|x-y|,\,\eta=1 / 2-n / (2q_0)$. Here $0<\eta-\gamma/ 2<1.$ This estimate plays an important role in our analysis.
\end{remark} 

We are now in a position to state the $L^p$ boundedness of $\mu{_j^\mathcal{L}}$ and $\mu{_{j,\Omega}^\mathcal{L}}$.
\begin{lemma}{\rm(\cite{GT1})}\label{lem:kjlpp}
	Let $V\in\mathcal{B}_n$. Then operators $\mu{_j^\mathcal{L}}\,(j=1,2,\cdots,n)$ are bounded on $L^p(\mathbb{R}^n)$ for $1<p<\infty$, and bounded from $L^1(\mathbb{R}^n)$ to weak $L^1(\mathbb{R}^n)$.
\end{lemma}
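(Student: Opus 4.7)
The plan is to recast $\mu_j^{\mathcal L}$ as a vector-valued Calder\'on--Zygmund operator with values in the Hilbert space $\mathcal H := L^2((0,\infty), dt/t^3)$, and then apply the standard vector-valued $L^p$ theory. Define
\[
\mathcal T_j^{\mathcal L} f(x)(t) := \int_{|x-y|\leq t} K_j^{\mathcal L}(x,y)\,f(y)\,dy,
\]
so that $\mu_j^{\mathcal L}(f)(x) = \|\mathcal T_j^{\mathcal L} f(x)\|_{\mathcal H}$. The $\mathcal H$-valued kernel is $\mathcal K(x,y)(t) = K_j^{\mathcal L}(x,y)\,\chi_{[|x-y|,\infty)}(t)$, and a direct computation gives $\|\mathcal K(x,y)\|_{\mathcal H} = |K_j^{\mathcal L}(x,y)|/(\sqrt{2}\,|x-y|)$. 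Combined with Lemma \ref{lem:kjlguji} (i) (taking $l=0$), this yields the size estimate $\|\mathcal K(x,y)\|_{\mathcal H} \lesssim |x-y|^{-n}$; Lemma \ref{lem:kjlguji} (ii) then supplies the H\"ormander-type regularity in $x$ with exponent $\beta$. A corresponding $y$-smoothness is needed and can be deduced from the analogous regularity for the heat-kernel representation of the Schr\"odinger--Riesz transform $R_j^{\mathcal L}$.

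The key obstacle is the $L^2$ boundedness of $\mathcal T_j^{\mathcal L}$. I would handle it via the decomposition $K_j^{\mathcal L} = K_j^\Delta + (K_j^{\mathcal L} - K_j^\Delta)$: the classical Marcinkiewicz integral $\mu_j^\Delta$ associated with the free Laplacian is bounded on $L^2(\mathbb R^n)$ by Stein's original argument (since the angular factor is Lipschitz), while the perturbation is controlled by Lemma \ref{lem:kjlguji} (iii), giving
\[
|K_j^{\mathcal L}(x,y) - K_j^\Delta(x,y)| \lesssim |x-y|^{-(n-1)}\,(|x-y|/\rho(y))^\delta
\]
for some $\delta > 0$. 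This extra $\delta$-decay, combined with Fubini on the $t$-variable and the global properties of $\rho$ from Lemma \ref{lem:fuzhuhanshu}, produces a remainder operator with an absolutely integrable Schur-type $\mathcal H$-kernel, hence $L^2$-bounded.

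Once the $L^2$ step is secured, the two-sided H\"ormander conditions activate the vector-valued Calder\'on--Zygmund machinery, giving $L^p$ boundedness for all $1 < p < \infty$ and the weak $(1,1)$ endpoint simultaneously. The main technical nuance lies in juggling the $y$-variable smoothness of $K_j^{\mathcal L}$, since the estimates of Shen \cite{S3} are usually stated in the $x$-variable; this can be sidestepped either by exploiting the formal self-adjointness of $\mathcal L$ and the resulting symmetry of the heat kernel, or by a duality argument that swaps the roles of $x$ and $y$ after establishing $L^{p'}$ bounds for the adjoint.
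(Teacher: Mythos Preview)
The paper does not supply a proof of this lemma; it simply quotes the result from \cite{GT1}. For the closely related rough-kernel version (Lemma~\ref{lem:kjlOmegapp}) the paper does give an argument, and it is considerably more elementary than your vector-valued Calder\'on--Zygmund scheme: one establishes the pointwise domination
\[
\mu_{j,\Omega}^{\mathcal L} f(x)\ \le\ \mu_{j,\Omega} f(x) + c\,M_\Omega f(x)
\]
and then invokes the known $L^p$ and weak-$(1,1)$ bounds for the classical Marcinkiewicz integral and the maximal operator. The same idea, specialized to $\Omega\equiv 1$, is what \cite{GT1} uses for $\mu_j^{\mathcal L}$ itself. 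Your framework is more structural and in principle also leads to the result, but it is heavier machinery than the problem requires.

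There is, however, a genuine gap in your $L^2$ step. You assert that the remainder kernel $K_j^{\mathcal L}-K_j^\Delta$ produces an $\mathcal H$-valued operator with absolutely integrable (Schur-type) kernel. Computing the $\mathcal H$-norm gives
\[
\|\mathcal K_{\mathrm{diff}}(x,y)\|_{\mathcal H}\ \sim\ \frac{|K_j^{\mathcal L}(x,y)-K_j(x,y)|}{|x-y|},
\]
and Lemma~\ref{lem:kjlguji}\,(iii) only yields the useful bound $|x-y|^{-n}(|x-y|/\rho)^\delta$ in the near region $|x-y|\lesssim\rho(x)$. For $|x-y|\gg\rho(x)$ the classical kernel $K_j$ carries no extra decay, so the difference is merely $O(|x-y|^{-(n-1)})$ and hence $\|\mathcal K_{\mathrm{diff}}(x,y)\|_{\mathcal H}\sim|x-y|^{-n}$, which is \emph{not} integrable at infinity; the Schur test fails there. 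The repair is precisely the pointwise-domination idea above: in the far region one should not compare with $K_j$ at all but instead exploit the strong decay of $K_j^{\mathcal L}$ from Lemma~\ref{lem:kjlguji}\,(i) with large $l$, which controls that contribution by the Hardy--Littlewood maximal function rather than by a Schur kernel. A smaller omission: your H\"ormander estimate for the $\mathcal H$-valued kernel ignores the jump in the cutoff $\chi_{[|x-y|,\infty)}(t)$; this contributes an extra term of order $|x-y|^{1/2}/|x-z|^{n+1/2}$, which is harmless but should be recorded.
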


\begin{lemma}{\rm}\label{lem:kjlOmegapp} 
	Let $V\in\mathcal{B}_n,\ 1<p<\infty$ and $\Omega\in L^q(\mathbb{S}^{n-1})\,(1<q\leq\infty)$ be homogeneous of degree zero, then the operators $\mu{_{j,\Omega}^\mathcal{L}}\,(j=1,2,\cdots,n)$ are bounded from $L^p(\mathbb{R}^n)$ for $1<p<\infty$, and bounded from $L^1(\mathbb{R}^n)$ to weak $L^1(\mathbb{R}^n)$.
\end{lemma}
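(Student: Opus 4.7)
My approach is to peel off the rough factor $|\Omega|$ by the method of rotations, which reduces the $L^p$-bound (for $p \geq q'$) to a uniform estimate on directional Schr\"{o}dinger--Marcinkiewicz operators that can be controlled analogously to Lemma~\ref{lem:kjlpp}. The remaining $L^p$-range and the weak-$(1,1)$ endpoint are then obtained by treating $\mu_{j,\Omega}^{\mathcal{L}}$ as a vector-valued Calder\'{o}n--Zygmund operator.

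Writing $y = x-r\sigma$ with $r>0$, $\sigma\in\mathbb{S}^{n-1}$ and using the $0$-homogeneity of $\Omega$,
\begin{align*}
\int_{|x-y|\leq t} |\Omega(x-y)|\,K_j^{\mathcal{L}}(x,y)\,f(y)\,dy = \int_{\mathbb{S}^{n-1}} |\Omega(\sigma)|\,G_{\sigma,t}(x)\,d\sigma,
\end{align*}
where $G_{\sigma,t}(x) := \int_0^t K_j^{\mathcal{L}}(x,x-r\sigma)\,f(x-r\sigma)\,r^{n-1}\,dr$. Minkowski's inequality in $L^2(dt/t^3)$ followed by H\"{o}lder's inequality in $\sigma$ with exponents $q,q'$ yields
\begin{align*}
\mu_{j,\Omega}^{\mathcal{L}}(f)(x) \leq \|\Omega\|_{L^q(\mathbb{S}^{n-1})}\Bigl(\int_{\mathbb{S}^{n-1}} A_\sigma(f)(x)^{q'}\,d\sigma\Bigr)^{1/q'},\qquad A_\sigma(f)(x):=\|G_{\sigma,\cdot}(x)\|_{L^2(dt/t^3)}.
\end{align*}
For $p\geq q'$, a further Minkowski step in $L^{p/q'}$ reduces matters to the uniform-in-$\sigma$ estimate $\|A_\sigma\|_{L^p\to L^p}\lesssim 1$. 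The operator $A_\sigma$ is a directional Marcinkiewicz-type object whose kernel $K_j^{\mathcal{L}}(x,x-r\sigma)$ inherits, uniformly in $\sigma$, the size and smoothness bounds from Lemma~\ref{lem:kjlguji}; slicing $\mathbb{R}^n$ along lines parallel to $\sigma$ and invoking Fubini, the required bound reduces to a one-dimensional analog of Lemma~\ref{lem:kjlpp}, obtained by rerunning its proof on each line.

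For the range $1<p<q'$ and the weak-$(1,1)$ endpoint I would treat $\mu_{j,\Omega}^{\mathcal{L}}$ as an $L^2(dt/t^3)$-valued Calder\'{o}n--Zygmund operator with vector kernel $\mathbf{K}(x,y)(t)=|\Omega(x-y)|\,K_j^{\mathcal{L}}(x,y)\,\chi_{\{t\geq |x-y|\}}$. Integrating in $t$ gives the size bound $\|\mathbf{K}(x,y)\|_{L^2(dt/t^3)}\lesssim|\Omega(x-y)|/|x-y|^n$, and a dyadic annular decomposition of $\mathbb{R}^n\setminus B(y,2|y-y'|)$, with H\"{o}lder applied on each annulus to pull out $\|\Omega\|_{L^q(\mathbb{S}^{n-1})}$ against the regularity bound from Lemma~\ref{lem:kjlguji}(ii), delivers a vector-valued H\"{o}rmander condition in $y$. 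Standard Calder\'{o}n--Zygmund decomposition then gives weak-$(1,1)$, and Marcinkiewicz interpolation with the $L^{q'}$-bound from the first step covers the full range.

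The principal obstacle is the uniform-in-$\sigma$ $L^p$-estimate for $A_\sigma$: since $V$ destroys translation and rotation symmetry, $A_\sigma$ cannot be reduced to a single canonical one-dimensional model by a change of variables; rather, the proof of Lemma~\ref{lem:kjlpp} has to be redone on slices with care that every constant from Lemma~\ref{lem:kjlguji} depends only on $V$ and $n$. A secondary subtlety is extracting $\|\Omega\|_{L^q(\mathbb{S}^{n-1})}$ at the correct stage of the H\"{o}rmander computation, so that the Schr\"{o}dinger decay factor $(1+|x-y|/\rho(x))^{-N}$ is preserved and ensures absolute convergence of the $x$-integral.
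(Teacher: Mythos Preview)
Your method-of-rotations step has a fatal gap: the directional operators $A_\sigma$ are not $L^p$-bounded, and in fact are not even finite on smooth compactly supported data. The cancellation that makes $\mu_{j,\Omega}^{\mathcal{L}}$ well-defined lives in the \emph{angular} integration of $K_j^{\mathcal{L}}$, and your Minkowski/H\"older step destroys it. Concretely, for small $r$ Lemma~\ref{lem:kjlguji}(iii) gives $K_j^{\mathcal{L}}(x,x-r\sigma)\,r^{n-1}\approx K_j(x,x-r\sigma)\,r^{n-1}=\sigma_j$, so for $f=\chi_{B(0,\varepsilon)}$ and $t<\varepsilon$ one has $G_{\sigma,t}(0)\approx\sigma_j\,t$, and hence
\[
A_\sigma(f)(0)^2\;\gtrsim\;\sigma_j^2\int_0^{\varepsilon}\frac{t^2}{t^3}\,dt\;=\;+\infty
\]
whenever $\sigma_j\neq0$. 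The full operator escapes this because $\int_{\mathbb{S}^{n-1}}|\Omega(\sigma)|\sigma_j\,d\sigma$ vanishes (this is exactly the mean-zero property the paper invokes), but each slice $A_\sigma$ has a one-signed kernel along the ray and no cancellation. Consequently your ``rerunning Lemma~\ref{lem:kjlpp} on lines'' cannot succeed, and since your vector-valued Calder\'on--Zygmund step needed the rotations step to supply an initial $L^{q'}$ bound, the whole chain collapses.

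The paper's argument avoids this entirely by a pointwise comparison: writing $K_j^{\mathcal{L}}=K_j+(K_j^{\mathcal{L}}-K_j)$ and using the kernel estimates from \cite{AK,GT1} one gets
\[
\mu_{j,\Omega}^{\mathcal{L}}(f)(x)\le \mu_{j,\Omega}(f)(x)+c\,M_\Omega f(x)\qquad\text{a.e.,}
\]
where $\mu_{j,\Omega}$ is the classical Marcinkiewicz integral with kernel $|\Omega(x-y)|K_j(x,y)$ (which is homogeneous of degree zero and has mean zero on $\mathbb{S}^{n-1}$) and $M_\Omega$ is the rough maximal function. Both of these are already known to be $L^p$-bounded for $1<p<\infty$ and weak-$(1,1)$ \cite{DFP,FS,SM,CH}, so the lemma follows immediately. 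If you want to repair your approach rather than adopt this one, you would need to keep the angular cancellation intact---e.g.\ pair antipodal directions before applying Minkowski---but at that point you are essentially reconstructing the classical theory, and the pointwise comparison is far more efficient.
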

\begin{proof}Note that $\Omega$ is homogeneous of degree zero. Then we have
	\begin{align*}
	\int_{\left|x-y\right|\leq1} \left|\Omega(x-y)\right| k_j(x,y)d \sigma=0.
	\end{align*}
	Using the ideas from \cite{AK,GT1}, it follows that
	\begin{align*}
		\mu{_{j,\Omega}^\mathcal{L}}f(x)\leq\mu{_{j,\Omega}}f(x)+cM_{\Omega}f(x),\ a.e.\,x\in\mathbb{R}^n,
	\end{align*}
	where $M_{\Omega}$ is the classical maximal operator with homogeneous kernel defined by 
	$$M_\Omega f(x)=\sup\limits_{t>0}\frac{1}{t^n}\int_{\left|x-y\right|\leq t}\left|\Omega(x-y)f(y)\right|dy.$$

	Therefore, $\mu_{j,\Omega}$ is  the classical Marcinkiewicz integral operators with a kernel of homogeneous of degree zero, satisfying mean value property on $\mathbb{S}^{n-1}$.  Lemma \ref {lem:kjlOmegapp}  then follows from the $L^p\ (1<p<\infty)$ and weak $(1,1)$ boundedness of classical Marcinkiewicz integral \cite{DFP,FS} and the properties of the  classical maximal operator \cite{SM} (p. 72) and  \cite{CH}.
\end{proof}

The functions in Localized Morrey-Campanato type spaces enjoy the following properties.
\begin{lemma}{\rm(\cite{YYZ3})}\label{lem:fcampanato1} 
	Let $\alpha \in \mathbb{R},\,p \in[1, \infty)$. Then there exists a positive constant $C$ such that for all $f \in \mathcal{E}{_\mathcal{L}^{\alpha,p}}(\mathbb{R}^n)$,
		\begin{enumerate}
		\item[\rm(i)]
for all balls $B \equiv B\left(x_0, r\right) \notin D_\rho$,
	$$
	\frac{1}{\left|B\right|} \int_B|f(z)| d z \leq \begin{cases}C\left(\frac{\rho\left(x_0\right)}{r}\right)^{\alpha n}\left|B\right|^\alpha\|f\|_{\mathcal{E}{_\mathcal{L}^{\alpha,p}}(\mathbb{R}^n)}, & \alpha>0, \\ C\left(1+\log_2 \frac{\rho\left(x_0\right)}{r}\right)\left|B\right|^\alpha\|f\|_{\mathcal{E}{_\mathcal{L}^{\alpha,p}}(\mathbb{R}^n)}, & \alpha \leq 0 ;\end{cases}
	$$
\item[\rm(ii)]for all $x \in \mathbb{R}^n$ and $0<r_1<r_2$,
	$$
	\left|f_{B\left(x, r_1\right)}-f_{B\left(x, r_2\right)}\right| \leq \begin{cases}C\left(\frac{r_2}{r_1}\right)^{\alpha n}\left|B(x,r_1)\right|^\alpha\|f\|_{\mathcal{E}{_\mathcal{L}^{\alpha,p}}(\mathbb{R}^n)}, & \alpha>0, \\ C\left(1+\log_2 \frac{r_2}{r_1}\right)\left|B(x,r_1)\right|^\alpha\|f\|_{\mathcal{E}{_\mathcal{L}^{\alpha,p}}(\mathbb{R}^n)}, & \alpha \leq 0 .\end{cases}
	$$	\end{enumerate}
\end{lemma}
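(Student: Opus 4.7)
The plan is to establish (ii) first by a telescoping dyadic chain, and then deduce (i) from (ii) together with the $D_\rho$ part of the definition~\eqref{def:LMCS}. For (ii), I would split into three regimes based on the position of $B(x,r_1),B(x,r_2)$ relative to $D_\rho$. In the regime $r_2<\rho(x)$ (both balls outside $D_\rho$), the intermediate dyadic balls $B(x,2^{i}r_1)$ for $i=0,\dots,k$ with $k=\lfloor\log_2(r_2/r_1)\rfloor$ are all still outside $D_\rho$, and by H\"{o}lder's inequality together with the first supremum in~\eqref{def:LMCS} each consecutive difference $|f_{B(x,2^{i}r_1)}-f_{B(x,2^{i+1}r_1)}|$ is bounded by $2^{n}|B(x,2^{i+1}r_1)|^{\alpha}\|f\|_{\mathcal{E}_{\mathcal{L}}^{\alpha,p}(\rn)}$. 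Summing the resulting geometric series of ratio $2^{\alpha n}$ produces $(r_2/r_1)^{\alpha n}|B(x,r_1)|^{\alpha}\|f\|$ when $\alpha>0$; for $\alpha\le0$ each summand is already controlled by $|B(x,r_1)|^{\alpha}\|f\|$, so only the length of the chain, $\lesssim 1+\log_2(r_2/r_1)$, contributes.

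For the mixed regime $r_1<\rho(x)\le r_2$, I would chain from $r_1$ up to a radius $r_0\sim\rho(x)$ as before, and then bound $|f_{B(x,r_0)}|$ and $|f_{B(x,r_2)}|$ directly by $\|f\||B|^{\alpha}$ using Jensen's inequality and the second supremum in~\eqref{def:LMCS}, which applies because both balls lie in $D_\rho$. The case where both balls are in $D_\rho$ reduces to a single application of the same estimate via the triangle inequality. The monotonicity of $r\mapsto r^{\alpha n}$ and the comparison $r_2\ge r_0\sim\rho(x)$ then yield the announced bounds in each sign regime of $\alpha$.

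For (i), the split $\tfrac{1}{|B|}\int_B|f|\le\tfrac{1}{|B|}\int_B|f-f_B|+|f_B|$ reduces matters to bounding $|f_B|$, since H\"{o}lder's inequality and~\eqref{def:LMCS} control the first summand by $|B|^{\alpha}\|f\|$. For $|f_B|$, I would compare $B$ to the critical ball $\widetilde B=B(x_0,\rho(x_0))\in D_\rho$ via $|f_B|\le|f_B-f_{\widetilde B}|+|f_{\widetilde B}|$: part (ii) with $(r_1,r_2)=(r,\rho(x_0))$ handles the first term, while the $D_\rho$ part of~\eqref{def:LMCS} combined with Jensen handles the second. A case distinction on the sign of $\alpha$, together with $\rho(x_0)/r>1$, absorbs the remainders into the stated form.

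The main obstacle I anticipate is the careful bookkeeping at the boundary between the $D_\rho$ and non-$D_\rho$ regimes: one must guarantee that every intermediate ball in the telescoping chain lies on the correct side of the cut-off so that the appropriate supremum in~\eqref{def:LMCS} can be invoked at each step. This is resolved by taking the chain length to be exactly $\lfloor\log_2(r_2/r_1)\rfloor$, and by invoking Lemma~\ref{lem:fuzhuhanshu} where needed to show that comparable choices of the transition radius leave the estimates unchanged up to universal constants.
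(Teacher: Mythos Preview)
The paper does not supply its own proof of this lemma: it is quoted verbatim from \cite{YYZ3} (Yang--Yang--Zhou), so there is nothing in the present paper to compare your argument against. Your dyadic telescoping strategy for (ii) followed by the reduction of (i) to (ii) via the critical ball $B(x_0,\rho(x_0))$ is exactly the standard argument behind such estimates and is correct in outline.

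One small point of bookkeeping to tighten. In the mixed regime $r_1<\rho(x)\le r_2$ you write that you will chain from $r_1$ up to some $r_0\sim\rho(x)$ and then bound $|f_{B(x,r_0)}|$ using the $D_\rho$ part of \eqref{def:LMCS} ``because both balls lie in $D_\rho$''. But if every intermediate dyadic ball in your chain is to stay \emph{outside} $D_\rho$ (so that the oscillation supremum in \eqref{def:LMCS} applies at each step), the terminal radius $r_0$ must satisfy $r_0<\rho(x)$, and then $B(x,r_0)\notin D_\rho$. The clean fix is to stop the oscillation chain at the last dyadic radius $2^{k-1}r_1<\rho(x)$, set $r_0=2^kr_1\ge\rho(x)$, and handle the single boundary step $|f_{B(x,2^{k-1}r_1)}-f_{B(x,r_0)}|$ by the inclusion $B(x,2^{k-1}r_1)\subset B(x,r_0)$ together with the $D_\rho$ bound on $B(x,r_0)$:
\[
|f_{B(x,2^{k-1}r_1)}|\le \frac{2^n}{|B(x,r_0)|}\int_{B(x,r_0)}|f|\le 2^n|B(x,r_0)|^{\alpha}\|f\|_{\mathcal{E}_{\mathcal{L}}^{\alpha,p}(\rn)}.
\]
You already flagged this boundary issue in your last paragraph, so this is just to make the resolution explicit; no appeal to Lemma~\ref{lem:fuzhuhanshu} is actually needed here.
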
 

\begin{lemma}\label{lemm:fcampanato2}
	Let $\alpha \in \mathbb{R},\,p \in[1, \infty)$. Then there exists a positive constant $C$ such that for all $f \in \mathcal{E}{_\mathcal{L}^{\alpha,p}}(\mathbb{R}^n)$ and for all balls $B \equiv B\left(x_0, r\right) \notin D_\rho$,
	\begin{align}\label{lem:fcampanato2}
		\left(\frac{1}{\left|B\right|} \int_B|f(z)|^p d z\right)^{\frac{1}{p}} \leq \begin{cases}C\left(\frac{\rho\left(x_0\right)}{r}\right)^{\alpha n}\left|B\right|^\alpha\|f\|_{\mathcal{E}{_\mathcal{L}^{\alpha,p}}(\mathbb{R}^n)}, & \alpha>0, \\ C\left(1+\log_2 \frac{\rho\left(x_0\right)}{r}\right)\left|B\right|^\alpha\|f\|_{\mathcal{E}{_\mathcal{L}^{\alpha,p}}(\mathbb{R}^n)}, & \alpha \leq 0.
		\end{cases}
	\end{align}
\end{lemma}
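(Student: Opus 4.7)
My plan is to deduce Lemma \ref{lemm:fcampanato2} from Lemma \ref{lem:fcampanato1}(i) by a standard Minkowski-type splitting. Writing $f = (f - f_B) + f_B$ and applying Minkowski's inequality gives
\begin{align*}
\left(\frac{1}{|B|}\int_B |f(z)|^p\,dz\right)^{1/p}
\leq \left(\frac{1}{|B|}\int_B |f(z) - f_B|^p\,dz\right)^{1/p} + |f_B|.
\end{align*}
So it suffices to handle the two terms separately.

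For the first term, since $B = B(x_0,r) \notin D_\rho$, the oscillation part of the norm $\|f\|_{\mathcal{E}_\mathcal{L}^{\alpha,p}(\mathbb{R}^n)}$ applies directly at the ball $B$, and I immediately obtain
\begin{align*}
\left(\frac{1}{|B|}\int_B |f(z) - f_B|^p\,dz\right)^{1/p} \leq |B|^\alpha \|f\|_{\mathcal{E}_\mathcal{L}^{\alpha,p}(\mathbb{R}^n)}.
\end{align*}
Since $|B|^\alpha$ is dominated by the right-hand side of \eqref{lem:fcampanato2} in both the $\alpha > 0$ and $\alpha \le 0$ cases (the factors $(\rho(x_0)/r)^{\alpha n}$ and $1 + \log_2(\rho(x_0)/r)$ are at least one because $B \notin D_\rho$, i.e.\ $r < \rho(x_0)$), this term is under control.

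For the second term, I use the trivial bound $|f_B| \le \frac{1}{|B|}\int_B |f(z)|\,dz$ and then apply Lemma \ref{lem:fcampanato1}(i) at the same ball $B$, which yields exactly the dichotomy
\begin{align*}
|f_B| \leq \begin{cases}C\left(\rho(x_0)/r\right)^{\alpha n}|B|^\alpha\|f\|_{\mathcal{E}_\mathcal{L}^{\alpha,p}(\mathbb{R}^n)}, & \alpha>0,\\ C\left(1+\log_2(\rho(x_0)/r)\right)|B|^\alpha\|f\|_{\mathcal{E}_\mathcal{L}^{\alpha,p}(\mathbb{R}^n)}, & \alpha\le 0.\end{cases}
\end{align*}
Combining the two estimates gives the claim.

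There is no substantive obstacle here; the lemma is essentially the $L^p$ upgrade of Lemma \ref{lem:fcampanato1}(i), and the only thing the Minkowski split is doing is replacing the $L^1$ mean of $f - f_B$ (which would need Hölder to pass through) by its $L^p$ mean, which is already built into the definition of $\mathcal{E}_\mathcal{L}^{\alpha,p}(\mathbb{R}^n)$. The mildly delicate point is merely to notice that for $B \notin D_\rho$ one has $r < \rho(x_0)$, so that both weights $(\rho(x_0)/r)^{\alpha n}$ (when $\alpha > 0$) and $1 + \log_2(\rho(x_0)/r)$ (when $\alpha \le 0$) are $\ge 1$, which absorbs the bare $|B|^\alpha$ coming from the oscillation part.
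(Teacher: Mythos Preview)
Your proof is correct and takes essentially the same approach as the paper: a Minkowski split into the oscillation part (bounded by $|B|^\alpha\|f\|$ from the definition) and the average part. The only minor difference is that the paper further decomposes $|f_B| \le |f_B - f_{B(x_0,\rho(x_0))}| + |f_{B(x_0,\rho(x_0))}|$ and handles the first piece via Lemma~\ref{lem:fcampanato1}(ii) and the second via the $D_\rho$-part of the norm, whereas you bound $|f_B|$ in one step via Lemma~\ref{lem:fcampanato1}(i); your route is slightly more direct.
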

\begin{proof}
	For all $f \in \mathcal{E}{_\mathcal{L}^{\alpha,p}}(\mathbb{R}^n)$ and balls $B \equiv B\left(x_0, r\right) \notin D_\rho$, the Minkowski inequality gives
	\begin{align*}
		\left(\frac{1}{\left|B\right|} \int_B|f(z)|^p d z\right)^{\frac{1}{p}}
		&\leq\left(\frac{1}{\left|B\right|} \int_B|f(z)-f_B|^p d z\right)^{\frac{1}{p}}+\left|f_B-f_{B(x_0,\rho(x_0))}\right|+|f_{B(x_0,\rho(x_0))}|\\
		&\lesssim\left|B\right|^\alpha+\left|f_B-f_{B(x_0,\rho(x_0))}\right|+\left(\frac{\rho\left(x_0\right)}{r}\right)^{\alpha n}\left|B\right|^\alpha.
	\end{align*}
	By Lemma \ref{lem:fcampanato1}~(ii), it follows that
	\begin{align*}
		\left|f_B-f_{B(x_0,\rho(x_0))}\right|\lesssim \begin{cases}\left(\frac{\rho\left(x_0\right)}{r}\right)^{\alpha n}\left|B\right|^\alpha, & \alpha>0, \\ \left(1+\log_2 \frac{\rho\left(x_0\right)}{r}\right)\left|B\right|^\alpha, & \alpha \leq 0 ;\end{cases}
	\end{align*}
	Thus (\ref{lem:fcampanato2}) is proved.
\end{proof}

We will use the following well-known lemma related to the $L^q$-Dini condition.
\begin{lemma}{\rm(\cite{KW})}\label{lm:omegalqdini}
	Suppose that $0<\lambda<n$,and $\Omega$ is homogeneous of degree zero and satisfies the $L^q$-Dini type condition with $\epsilon=0$ for $q>1$. If  there exists a constant $a_0>0$  such that $|x|<a_0 R$, then we have 
	$$
	\begin{aligned}
		\left(\int_{R<|y|<2 R}\left|\frac{\Omega(y-x)}{|y-x|^{n-\lambda}}-\frac{\Omega(y)}{|y|^{n-\lambda}}\right|^q d y\right)^{1 / q}\leq C R^{n / q-(n-\lambda)}\left\{\frac{|x|}{R}+\int_{|x| / 2 R<\delta<|x| / R} \frac{\omega_q(\delta)}{\delta} d \delta\right\},
	\end{aligned}
	$$
	where the constant $C>0$ is independent of $R$ and $x$.
\end{lemma}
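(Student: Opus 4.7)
The plan is to split the integrand into two natural pieces that isolate the change in the radial and spherical factors of the kernel $\Omega(\cdot)/|\cdot|^{n-\lambda}$:
$$
\frac{\Omega(y-x)}{|y-x|^{n-\lambda}}-\frac{\Omega(y)}{|y|^{n-\lambda}}
=\underbrace{\Omega(y-x)\!\left[\tfrac{1}{|y-x|^{n-\lambda}}-\tfrac{1}{|y|^{n-\lambda}}\right]}_{J_1(y)}
+\underbrace{\tfrac{\Omega(y-x)-\Omega(y)}{|y|^{n-\lambda}}}_{J_2(y)}.
$$
We may assume $a_0<1/2$ (otherwise the lemma is vacuous), so that $|y-x|\sim|y|\sim R$ throughout the annulus $\{R<|y|<2R\}$.

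For $J_1$, the mean value theorem applied to $s\mapsto s^{-(n-\lambda)}$ gives $\bigl|\tfrac{1}{|y-x|^{n-\lambda}}-\tfrac{1}{|y|^{n-\lambda}}\bigr|\lesssim |x|/R^{n-\lambda+1}$. Combining this with the polar-coordinate estimate $\int_{R/2<|z|<3R}|\Omega(z)|^q\,dz\lesssim R^n\|\Omega\|_{L^q(\mathbb{S}^{n-1})}^q$ (after the change of variable $z=y-x$) yields
$$
\|J_1\|_{L^q(R<|y|<2R)}\lesssim R^{n/q-(n-\lambda)}\cdot\frac{|x|}{R},
$$
which accounts for the first term on the right-hand side of the claim.

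For $J_2$, using $|y|\sim R$ together with polar coordinates $y=r\theta$ and the zero-homogeneity of $\Omega$, the task reduces to estimating
$$
\int_R^{2R}r^{n-1}H(r)\,dr,\qquad H(r):=\int_{\mathbb{S}^{n-1}}\bigl|\Omega(\Phi_{r,x}\theta)-\Omega(\theta)\bigr|^q d\sigma(\theta),
$$
where $\Phi_{r,x}(\theta)=(r\theta-x)/|r\theta-x|$ is a diffeomorphism of $\mathbb{S}^{n-1}$ with Jacobian bounded above and below and $|\Phi_{r,x}\theta-\theta|\leq C|x|/r$ uniformly in $\theta$. The crucial step is a pointwise-in-$r$ bound for $H(r)$ in terms of $\omega_q$ at the relevant scale. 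This is the main obstacle, because $\omega_q$ is defined only through genuine rotations of $\mathbb{S}^{n-1}$ while $\Phi_{r,x}$ is not a rigid motion and $\Omega$ has no pointwise regularity. My approach is to decompose $\mathbb{S}^{n-1}$ dyadically according to the angle $\phi$ between $\theta$ and $x/|x|$, on which the actual displacement is $|\Phi_{r,x}\theta-\theta|\approx (|x|/r)\sin\phi$; on each dyadic annulus $\{\theta:\sin\phi\sim 2^{-k}\}$ one approximates $\Phi_{r,x}$ by a rotation of angle $\sim 2^{-k}|x|/r$, using the bounded Jacobian and the $SO(n-1)$-equivariance of $\Phi_{r,x}$ about the axis $x/|x|$ to absorb the non-rotational residue via a change of variables. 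Summing the dyadic pieces (which is precisely a Cavalieri-type reassembly of the Dini integral) should produce
$$
H(r)^{1/q}\lesssim \int_{|x|/(2r)}^{|x|/r}\frac{\omega_q(\delta)}{\delta}\,d\delta.
$$

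Finally, inserting this pointwise estimate and using $r\sim R$ (so the inner integral is essentially independent of $r\in(R,2R)$),
$$
\int_R^{2R}r^{n-1}\!\left(\int_{|x|/(2r)}^{|x|/r}\frac{\omega_q(\delta)}{\delta}\,d\delta\right)^{\!q}\!dr\lesssim R^n\!\left(\int_{|x|/(2R)}^{|x|/R}\frac{\omega_q(\delta)}{\delta}\,d\delta\right)^{\!q},
$$
so that $\|J_2\|_{L^q(R<|y|<2R)}\lesssim R^{n/q-(n-\lambda)}\int_{|x|/(2R)}^{|x|/R}\omega_q(\delta)/\delta\,d\delta$. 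Combining this with the bound for $J_1$ gives the desired estimate, with constant $C>0$ depending only on $n$, $q$, $\lambda$, $a_0$ and $\|\Omega\|_{L^q(\mathbb{S}^{n-1})}$, independent of $R$ and $x$.
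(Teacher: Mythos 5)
First, a point of comparison: the paper offers no proof of this lemma at all --- it is quoted from Kurtz--Wheeden \cite{KW} (their Lemma~5) --- so there is no internal argument to measure yours against, and what follows assesses your proposal on its own terms. Your decomposition into $J_1$ and $J_2$, the mean value theorem treatment of $J_1$, and the reduction of $J_2$ to the spherical quantity $H(r)$ are correct and are exactly the standard opening moves; your restriction to $a_0<1/2$ is also the right reading of the hypothesis (as literally written, ``there exists $a_0>0$ with $|x|<a_0R$'' is vacuous, and for $a_0\ge 1$ the left-hand side can be infinite, so the intended hypothesis is $|x|<R/2$ as in \cite{KW}).

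The genuine gap sits precisely at the step you yourself flag as ``the main obstacle'' and resolve only with ``should produce'': the passage from $H(r)$ to $\omega_q$. Two things go wrong. (i) Approximating $\Phi_{r,x}$ by a rotation on each annulus $\{\theta:\sin\phi\sim 2^{-k}\}$ is not a routine matter of bounded Jacobian plus a change of variables: $\Phi_{r,x}$ displaces $\theta$ \emph{along the meridian} through $\pm x/|x|$ by an angle $\approx(|x|/r)\sin\phi$, whereas a rotation of angle $\beta$ displaces points along the latitude circles of \emph{its own} axis by amounts ranging from $0$ to $\beta$; no single rotation reproduces this displacement field on the annulus, and since $\Omega$ carries no pointwise regularity the ``non-rotational residue'' cannot be absorbed by a change of variables --- controlling it is the entire content of the Calder\'on--Zygmund rotation argument that \cite{KW} adapt. (ii) Even granting a bound $C\,\omega_q(2^{-k}|x|/r)^q$ for the $k$-th piece, summing over $k$ using only the monotonicity of $\omega_q$ yields $H(r)^{1/q}\lesssim\int_0^{C|x|/r}\omega_q(\delta)\,\delta^{-1}d\delta$, a full Dini integral from $0$, not the single dyadic band you claim. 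In the correct argument the band $(|x|/2R,|x|/R)$ is produced not by the spherical decomposition but by the subsequent integration in $r$ through the substitution $\delta=|x|/r$, and this requires the pointwise-in-$r$ bound to be a single value $\omega_q(C|x|/r)$ together with the near-doubling $\omega_q(2\delta)\le 2\,\omega_q(\delta)$ of the rotation modulus (to renormalize the constant $C$), neither of which appears in your sketch. Your weaker intermediate bound would still yield an estimate with $\int_0^{C|x|/R}\omega_q(\delta)\,\delta^{-1}d\delta$ on the right, which happens to suffice for the application in Section~\ref{section5} thanks to the strengthened Dini condition \eqref{def:lqDini2}, but it does not prove the lemma as stated; the economical course is to cite \cite{KW} as the paper does.
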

\begin{remark}[\cite{DLX2}]
	It is obvious that the $L^q$-Dini type condition with $\epsilon=0$ is weaker than the condition (\ref{def:lqDini2}).
\end{remark}

In order to prove our theorems, we begin with some main lemmas about the inner layer integral estimates for $\mu{_j^\mathcal{L}}$ and $\mu{_{j,\Omega}^\mathcal{L}}$.

\begin{lemma}\label{lem:kernelguji}
	Let $\alpha\in\mathbb{R},\ p\in(1,+\infty).$ For any $x\in\mathbb{R}^n,\ t>0$ and $0\leq s<\min\{1,\,1+n\alpha\}$, there exists a positive constant such that for any $f\in\mathcal{E}{_\mathcal{L}^{\alpha,p}}(\mathbb{R}^n),$ 
	\begin{align*}
		\bigg|\int_{\left|x-z\right|\leq t}K{_j^\mathcal{L}}(x,z)f(z)dz\bigg|\lesssim t\bigg(\frac{\rho(x)}{t}\bigg)^s\left|B(x,t)\right|^\alpha.
	\end{align*}
\end{lemma}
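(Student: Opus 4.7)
The plan is to split the argument into two regimes based on the size of $t$ relative to $\rho(x)$, proving in each regime an $s$-independent bound that then implies the claim by monotonicity of the right-hand side in $s$.

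\textit{Regime 1: $t \leq \rho(x)$.} All dyadic balls $B_k := B(x, 2^{-k}t)$ lie outside $D_\rho$. I will exploit the vanishing moment $\int_{|x-z|\leq t} K_j(x,z)\,dz = 0$, which follows from Lemma~\ref{lem:kguji}(iii) by radial integration, to split
\[
\int_{|x-z|\leq t} K_j^\mathcal{L}(x,z) f(z)\,dz = \int K_j(x,z)(f(z)-f_B)\,dz + \int (K_j^\mathcal{L}-K_j)(x,z) f(z)\,dz.
\]
For the first summand I use the dyadic decomposition into shells $A_k = B_k\setminus B_{k+1}$ together with $|K_j| \lesssim |x-z|^{-(n-1)}$, the Morrey--Campanato oscillation bound $\frac{1}{|B_k|}\int_{B_k}|f-f_{B_k}| \lesssim |B_k|^\alpha \|f\|$, and the telescoping bound on $|f_{B_k}-f_B|$ from Lemma~\ref{lem:fcampanato1}(ii); summing over $k$ (using $1 + n\alpha > 0$) gives $t^{1+n\alpha}\|f\|$. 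For the second summand, Lemma~\ref{lem:fuzhuhanshu} gives $\rho(z) \sim \rho(x)$ on $B(x, t)$, and Lemma~\ref{lem:kjlguji}(iii) gives $|K_j^\mathcal{L} - K_j|(x, z) \lesssim |x-z|^{\delta - n+1}/\rho(x)^\delta$; I choose $\delta \in (0, 2 - n/q_0)$ with $\delta \geq n\alpha$ (available since $n\alpha < 1 < 2 - n/q_0$) and apply Lemma~\ref{lem:fcampanato1}(i) to $\frac{1}{|B_k|}\int_{B_k}|f|$, which again produces $t^{1+n\alpha}\|f\|$ after summation (the logarithmic factor in the case $\alpha \leq 0$ is absorbed by $(t/\rho(x))^\delta$). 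Since $\rho(x)/t \geq 1$, the estimate $t^{1+n\alpha}\|f\| = t|B(x,t)|^\alpha\|f\| \leq t(\rho(x)/t)^s |B(x,t)|^\alpha\|f\|$ holds for every $s \geq 0$.

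\textit{Regime 2: $t > \rho(x)$.} I split the integration domain at $|x-z| = \rho(x)$. The inner piece $\int_{|x-z|\leq \rho(x)}$ is bounded by $\rho(x)^{1+n\alpha}\|f\|$ by the previous case (with $t$ replaced by $\rho(x)$). For the outer piece, Lemma~\ref{lem:kjlguji}(i) with $l > 1 + n\alpha$ gives fast decay, while $B_k \in D_\rho$ for $r_k \geq \rho(x)$ yields $(|f|^p)_{B_k}^{1/p} \leq r_k^{n\alpha}\|f\|$ directly from the definition of the norm; the resulting geometric sum is dominated by the endpoint at $r_k \sim \rho(x)$ and again gives $\rho(x)^{1+n\alpha}\|f\|$. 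Writing $\rho(x)^{1+n\alpha} = t(\rho(x)/t)^{1+n\alpha}|B(x,t)|^\alpha$ and using $\rho(x)/t < 1$ together with $s < 1 + n\alpha$ yields the claimed bound.

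The main obstacle is obtaining the sharp estimate $I \lesssim t^{1+n\alpha}\|f\|$ in Regime~1, which is required when $s$ is close to $0$. Estimating $K_j^\mathcal{L}$ pointwise by its size and plugging in Lemma~\ref{lem:fcampanato1}(i) directly yields only $I \lesssim t\,\rho(x)^{n\alpha}\|f\|$ when $\alpha > 0$ (a loss of $(\rho(x)/t)^{n\alpha}$), and $I \lesssim t^{1+n\alpha}(1+\log(\rho(x)/t))\|f\|$ when $\alpha \leq 0$ (with a spurious logarithm). Both losses are removed precisely by using the cancellation $\int K_j = 0$ on the classical part and the H\"older-type difference estimate of Lemma~\ref{lem:kjlguji}(iii) on $K_j^\mathcal{L}-K_j$; coordinating the parameter $\delta$ with $n\alpha$ while maintaining summability of the dyadic series is the most delicate aspect of the argument.
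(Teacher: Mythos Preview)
Your proposal is correct and rests on the same ingredients as the paper's proof: the two-regime split $t\lessgtr\rho(x)$, the dyadic decomposition of $B(x,t)$, the size bound Lemma~\ref{lem:kjlguji}(i), the difference bound Lemma~\ref{lem:kjlguji}(iii), the cancellation $\int_{|x-z|\le t}K_j(x,z)\,dz=0$, and the Campanato estimates of Lemma~\ref{lem:fcampanato1}. The organization, however, differs in a way worth noting. The paper carries the parameter $s$ through every dyadic sum (bounding $|K_j^{\mathcal L}(x,z)|\lesssim \rho(x)^s|x-z|^{-(n-1+s)}$ and summing $(2^kt)^{1-s}$-type series), which is where the constraint $s<1$ enters; and in the small-$t$ case the paper splits $f=(f-f_{B(x,t)})+f_{B(x,t)}$, applying the cancellation only to the constant part. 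You instead prove the single $s$-free bound $|I|\lesssim t|B(x,t)|^\alpha$ for $t\le\rho(x)$ (splitting $K_j^{\mathcal L}=K_j+(K_j^{\mathcal L}-K_j)$ and using the cancellation on the full $K_j$ piece) and the bound $|I|\lesssim \rho(x)^{1+n\alpha}$ for $t>\rho(x)$ (reusing Regime~1 on the inner core), and then recover all admissible $s$ by monotonicity of $(\rho(x)/t)^s$ on each side. This is a legitimate streamlining: it decouples the summability of the dyadic series from $s$, and the bootstrap from Regime~1 into Regime~2 avoids re-doing the small-scale analysis. Both arguments tacitly require $n\alpha$ to lie below the range of $\delta$ in Lemma~\ref{lem:kjlguji}(iii), which is consistent with how the lemma is applied in Theorems~\ref{th1}--\ref{th3}.
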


\begin{proof}
	Let  $f\in\mathcal{E}{_\mathcal{L}^{\alpha,p}}(\mathbb{R}^n).$ By the homogeneity of $\Vert \cdot\Vert_{\mathcal{E}{_\mathcal{L}^{\alpha,p}}(\mathbb{R}^n)}$, we may assume $\Vert f\Vert_{\mathcal{E}{_\mathcal{L}^{\alpha,p}}(\mathbb{R}^n)}=1.$ We need to consider two cases according to the relationship between $t$ and $\rho(x)$.
	
	\textbf{Case (i)}: $t\geq\rho(x)$. In this case, Lemma~\ref{lem:kjlguji}~(i) yields
	\begin{align*}
		\Big|\int_{\left|x-z\right|\leq t}K{_j^\mathcal{L}}(x,z)f(z)dz\Big|\lesssim&\left[\rho(x)\right]^s\int_{\left|x-z\right|\leq t}\frac{1}{\left|x-z\right|^{n+s-1}}\left|f(z)\right|dz\\
		\lesssim&\left[\rho(x)\right]^s\sum_{k=-\infty}^{0}\frac{(2^kt)^{1-s}}{\left|B(x,2^kt)\right|}\int_{\left|x-z\right|\leq 2^kt}\left|f(z)\right|dz.
	\end{align*}
	Since $t\geq\rho(x)$, there  exists an integer $k_0\in(-\infty,0]$ such that $2^{k_0-1}t<\rho(x)\leq2^{k_0}t$. \\
Consider first the case $k\in(-\infty,k_0]$. It follows that $2^kt\geq2^{k_0}t\geq\rho(x).$ By the H\"{o}lder inequality and (\ref{def:LMCS}), 
	\begin{align*}
		\frac{1}{\left|B(x,2^kt)\right|}\int_{\left|x-z\right|\leq 2^kt}\left|f(z)\right|dz\leq\Big(\frac{1}{\left|B(x,2^kt)\right|}\int_{\left|x-z\right|\leq 2^kt}\left|f(z)\right|^pdz\Big)^{\frac{1}{p}}\lesssim\left|B(x,2^kt)\right|^\alpha.
	\end{align*}
	Therefore
	\begin{align}\label{eq:1}
		\sum_{k=-\infty}^{k_0}\frac{(2^kt)^{1-s}}{\left|B(x,2^kt)\right|}\int_{\left|x-z\right|\leq 2^kt}\left|f(z)\right|dz]
		\lesssim t^{1-s}\left|B(x,t)\right|^\alpha.
	\end{align}
For the case $k\in(k_0,0]$, since $2^kt\leq2^{k_0-1}t<\rho(x).$ Then Lemma \ref{lem:fcampanato1}~(i) gives
	\begin{align*}
		\frac{1}{\left|B(x,2^kt)\right|}\int_{\left|x-z\right|\leq 2^kt}\left|f(z)\right|dz
		&\lesssim\begin{cases}\left(2^{k_0-k}\right)^{\alpha n}\left|B(x,2^kt)\right|^\alpha, & \alpha>0, \\ \left(1-k+k_0\right)\left|B(x,2^kt)\right|^\alpha, & \alpha \leq 0 .\end{cases}
	\end{align*}
	For $\alpha>0$, we have for all $0\leq s<1$,
	\begin{align*}
		\sum_{k=k_0+1}^{0}\frac{(2^kt)^{1-s}}{\left|B(x,2^kt)\right|}\int_{\left|x-z\right|\leq 2^kt}\left|f(z)\right|dz
		\lesssim t^{1-s}\left|B(x,t)\right|^\alpha.
	\end{align*}
	For $\alpha\leq0$, since $\min\{1,\,1+n\alpha\}=1+n\alpha$ and $1+k_0-k\leq1$, then for all $0\leq s<1+n\alpha$,
	\begin{align*}
		\sum_{k=k_0+1}^{0}\frac{(2^kt)^{1-s}}{\left|B(x,2^kt)\right|}\int_{\left|x-z\right|\leq 2^kt}\left|f(z)\right|dz\lesssim t^{1-s}\left|B(x,t)\right|^\alpha.
	\end{align*}
	Therefore, for all $\alpha\in\mathbb{R}$,
	\begin{align}\label{eq:2}
		\sum_{k=k_0+1}^{0}\frac{(2^kt)^{1-s}}{\left|B(x,2^kt)\right|}\int_{\left|x-z\right|\leq 2^kt}\left|f(z)\right|dz\lesssim t^{1-s}\left|B(x,t)\right|^\alpha.
	\end{align}
	Combining (\ref{eq:1}) and (\ref{eq:2}), we complete the case for $t\geq\rho(x)$.\\
	\textbf{Case (ii)}: $t<\rho(x)$. In this case, for all ~$0\leq s<\min\{1,\,1+n\alpha\}$, Lemma \ref{lem:kjlguji}~(i) gives 
	\begin{align*}
		&\bigg|\int_{\left|x-z\right|\leq t}K{_j^\mathcal{L}}(x,z)\left[f(z)-f_{B(x,t)}\right]dz\bigg|\\&
		\lesssim\left[\rho(x)\right]^s\int_{\left|x-z\right|\leq t}\frac{1}{\left|x-z\right|^{n+s-1}}\left|f(z)-f_{B(x,t)}\right|dz\\&
		\lesssim\left[\rho(x)\right]^s\sum_{k=-\infty}^{0}(2^kt)^{1-s}\Big\{\frac{1}{\left|B(x,2^kt)\right|}\int_{\left|x-z\right|\leq 2^kt}\big|f(z)-f_{B(x,2^kt)}\big|dz+\big|f_{B(x,2^kt)}-f_{B(x,t)}\big|\Big\}.
	\end{align*}
Note that by Lemma \ref{lem:fcampanato1}~(ii), we may obtain
	\begin{align*}
		\left|f_{B(x,2^kt)}-f_{B(x,t)}\right|&\lesssim
		\begin{cases}(2^{-k})^{n\alpha}\left|B(x,2^kt)\right|^\alpha, & \alpha>0, \\ (1-k)\left|B(x,2^kt)\right|^\alpha, & \alpha \leq 0 .
		\end{cases}
	\end{align*}
	Therefore for all $0\leq s<\min\{1,1+n\alpha\}$, $$
	\sum_{k=-\infty}^{0}(2^kt)^{1-s}\left|f_{B(x,2^kt)}-f_{B(x,t)}\right|
	\lesssim t^{1-s}\left|B(x,t)\right|^\alpha.$$
	Combining this inequality with (\ref{def:LMCS}) gives
	\begin{align}\label{eq:4}
		\bigg|\int_{\left|x-z\right|\leq t}K{_j^\mathcal{L}}(x,z)\left(f(z)-f_{B(x,t)}\right)dz\bigg|
		\lesssim t\big(\frac{\rho(x)}{t}\big)^s\left|B(x,t)\right|^\alpha.
	\end{align}
	On the other hand, by Lemma \ref{lem:fuzhuhanshu}, when $\left|x-z\right|\leq t<\rho(x)$, $\rho(x)\backsim\rho(z)$. Applying this with  $\mu_j(1)=0$, Lemma \ref{lem:kjlguji}~(iii), Lemma \ref{lem:fcampanato1} (i), and taking $\delta=\max\{1,\,\max\{0,n\alpha\}\}$, we obtain
	\begin{align}\label{eq:5}
		\bigg|\int_{\left|x-z\right|\leq t}K{_j^\mathcal{L}}(x,z)f_{B(x,t)}dz\bigg|
		\leq&\bigg|\int_{\left|x-z\right|\leq t}\left[K{_j^\mathcal{L}}(x,z)-K_j(x,z)\right]f_{B(x,t)}dz\bigg|\nonumber\\
		\lesssim&t\left[\frac{t}{\rho(x)}\right]^{\delta}\left|B(x,t)\right|^\alpha\begin{cases}\left(\frac{\rho\left(x\right)}{t}\right)^{n\alpha}, & \alpha>0, \\ 1+\log_2 \frac{\rho\left(x\right)}{t}, & \alpha \leq 0 ;
		\end{cases}\nonumber\\
		\lesssim&t\left[\frac{\rho(x)}{t}\right]^s\left|B(x,t)\right|^\alpha.
	\end{align}
	Summing up (\ref{eq:4}) and (\ref{eq:5}), we obtain the desired conclusion for $t<\rho(x)$, which completes the proof of Lemma \ref{lem:kernelguji}.
\end{proof}

\begin{lemma}\label{lem:kernelguji2}  Let $\alpha\in\mathbb{R},\ p\in(1,+\infty). $ Suppose $\Omega\in L^q(\mathbb{S}^{n-1})\,(\ 1\leq q'\leq p~)$ is homogeneous of degree zero. For any $x\in\mathbb{R}^n,\ t>0$, $0\leq s<\min\{1,\,1+n\alpha\}$, and any $f\in\mathcal{E}{_\mathcal{L}^{\alpha,p}}(\mathbb{R}^n),$
	\begin{align*}
\bigg|\int_{\left|x-z\right|\leq t}\left|\Omega(x-z)\right|K{_j^\mathcal{L}}(x,z)f(z)dz\bigg|\lesssim t\left[\frac{\rho(x)}{t}\right]^s\left|B(x,t)\right|^\alpha.
	\end{align*}
\end{lemma}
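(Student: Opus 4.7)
The plan is to mirror the two-case proof of Lemma~\ref{lem:kernelguji}, inserting a Hölder's inequality with the pair $(q,q')$ to absorb the rough factor $|\Omega(x-z)|$. By homogeneity we may normalize $\|f\|_{\mathcal{E}_\mathcal{L}^{\alpha,p}(\mathbb{R}^n)}=1$, and split into Case~(i) $t\ge\rho(x)$ and Case~(ii) $t<\rho(x)$, exactly as before.

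For Case~(i), apply Lemma~\ref{lem:kjlguji}(i) to obtain $|K_j^{\mathcal{L}}(x,z)|\lesssim [\rho(x)]^s/|x-z|^{n+s-1}$, and perform the dyadic decomposition of $B(x,t)$ into shells $2^{k-1}t<|x-z|\le 2^k t$, $k\le 0$. The new ingredient is that on each shell we apply Hölder with exponents $q$ and $q'$: polar coordinates together with the degree-zero homogeneity of $\Omega$ give
$$\Bigl(\int_{|x-z|\le r}|\Omega(x-z)|^q\,dz\Bigr)^{1/q}\lesssim r^{n/q}\|\Omega\|_{L^q(\mathbb{S}^{n-1})},$$
so that
$$\int_{|x-z|\le r}|\Omega(x-z)||f(z)|\,dz\lesssim r^{n/q}\Bigl(\int_{|x-z|\le r}|f(z)|^{q'}\,dz\Bigr)^{1/q'}.$$
Since $q'\le p$, one further Hölder step converts the $L^{q'}$-average into the $L^p$-average, which is controlled by \eqref{def:LMCS} when the ball lies in $D_\rho$ and by Lemma~\ref{lemm:fcampanato2} otherwise. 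Splitting the sum at the index $k_0$ with $2^{k_0-1}t<\rho(x)\le 2^{k_0}t$ and performing exactly the geometric estimates carried out in \eqref{eq:1} and \eqref{eq:2} then yields the desired bound.

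For Case~(ii), write $f(z)=[f(z)-f_{B(x,t)}]+f_{B(x,t)}$. The first piece is handled by the same dyadic/Hölder scheme, replacing $|f(z)-f_{B(x,t)}|$ by $|f(z)-f_{B(x,2^kt)}|+|f_{B(x,2^kt)}-f_{B(x,t)}|$ on each shell; the Campanato norm controls the first summand and Lemma~\ref{lem:fcampanato1}(ii) controls the second, giving the analogue of \eqref{eq:4}. For the second piece
$$f_{B(x,t)}\int_{|x-z|\le t}|\Omega(x-z)|K_j^{\mathcal{L}}(x,z)\,dz,$$
we estimate the inner integral directly: Lemma~\ref{lem:kjlguji}(i) with $l=0$ gives $|K_j^{\mathcal{L}}(x,z)|\lesssim|x-z|^{-(n-1)}$, and polar coordinates with $\Omega\in L^1(\mathbb{S}^{n-1})$ bound the integral by $Ct$. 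Combining with the pointwise estimate on $|f_{B(x,t)}|$ from Lemma~\ref{lem:fcampanato1}(i) and using the permitted range $0\le s<\min\{1,1+n\alpha\}$ to absorb the resulting logarithmic/polynomial factors into $(\rho(x)/t)^s|B(x,t)|^\alpha$ finishes the case.

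The main obstacle is precisely the second piece in Case~(ii). In the smooth setting the odd symmetry $\int_{|x-z|\le t}K_j(x,z)\,dz=0$ allowed the substitution $K_j^{\mathcal{L}}\to K_j^{\mathcal{L}}-K_j$, and Lemma~\ref{lem:kjlguji}(iii) then produced the crucial gain $(t/\rho(x))^{\delta}$ needed to overcome the possible growth $(\rho(x)/t)^{n\alpha}$ of $|f_{B(x,t)}|$ (this is the content of \eqref{eq:5}). The factor $|\Omega(x-z)|$ destroys that odd symmetry, so the cancellation trick is unavailable and we must rely instead on the pointwise bound on $K_j^{\mathcal{L}}$ together with the range of admissible $s$ in the hypothesis. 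Verifying that the $L^{q'}\to L^p$ passage allowed by $q'\le p$ is compatible, at every scale, with the two different Campanato bounds at $B\in D_\rho$ and $B\notin D_\rho$, and that the resulting series converge uniformly in the split index $k_0$, is the secondary technical point of the argument.
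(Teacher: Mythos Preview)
Your treatment of Case~(i) and of the oscillating part $f(z)-f_{B(x,t)}$ in Case~(ii) matches the paper's proof; the only substantive divergence is the constant piece $f_{B(x,t)}$ in Case~(ii), and there your argument has a real gap.

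Your direct bound gives
\[
\Big|f_{B(x,t)}\int_{|x-z|\le t}|\Omega(x-z)|K_j^{\mathcal L}(x,z)\,dz\Big|\lesssim t\,|f_{B(x,t)}|.
\]
Feeding in Lemma~\ref{lem:fcampanato1}(i) produces, for $t<\rho(x)$, the factors $(\rho(x)/t)^{n\alpha}$ when $\alpha>0$ and $1+\log_2(\rho(x)/t)$ when $\alpha\le 0$. Neither of these is controlled by $(\rho(x)/t)^s$ at $s=0$, and $s=0$ is in the admissible range of the lemma whenever $\alpha>-1/n$. So the claim that the permitted range of $s$ ``absorbs the resulting logarithmic/polynomial factors'' is false: the lemma asserts the estimate for \emph{every} $s$ in $[0,\min\{1,1+n\alpha\})$, including $s=0$, and your bound blows up there as $t/\rho(x)\to 0$.

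The paper does \emph{not} abandon the cancellation trick. It asserts $\mu_{j,\Omega}(1)=0$, i.e.\ $\int_{|x-z|\le t}|\Omega(x-z)|K_j(x,z)\,dz=0$, and uses exactly the substitution $K_j^{\mathcal L}\to K_j^{\mathcal L}-K_j$ you thought was blocked. Lemma~\ref{lem:kjlguji}(iii) then supplies the gain $(t/\rho(x))^{\delta}$, and choosing $\delta$ above $\max\{0,n\alpha\}$ kills the growth of $|f_{B(x,t)}|$; see \eqref{eq:9}. So the missing idea in your proposal is precisely the cancellation you dismissed. (It is fair to note that the vanishing $\int_{\mathbb S^{n-1}}|\Omega(\theta)|\theta_j\,d\sigma(\theta)=0$ used by the paper is not obvious from homogeneity alone and may warrant its own justification, but the paper's route is through this identity, not through a crude pointwise bound.)
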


\begin{proof}
Let $f\in\mathcal{E}{_\mathcal{L}^{\alpha,p}}(\mathbb{R}^n)$ and assume $\Vert f\Vert_{\mathcal{E}{_\mathcal{L}^{\alpha,p}}(\mathbb{R}^n)}=1$ by homogeneity. Consider two cases.\\
	\textbf{Case (i)}: $t\geq\rho(x)$. By Lemma~\ref{lem:kjlguji}~(i) and the H\"{o}lder inequality, it yields that
	\begin{align*}
{}&	\bigg|\int_{\left|x-z\right|\leq t}\left|\Omega(x-z)\right|K{_j^\mathcal{L}}(x,z)f(z)dz\bigg|\\
		&\lesssim\left[\rho(x)\right]^s\sum_{k=-\infty}^{0}(2^kt)^{1-s}\Big(\frac{1}{\left|B(x,2^kt)\right|}\int_{\left|x-z\right|\leq 2^kt}\left|f(z)\right|^{p}dz\Big)^{\frac{1}{p}}.
	\end{align*}
	Similarly, for $t\geq\rho(x)$, their exists an integer $k_0\in(-\infty,0]$ such that $2^{k_0-1}t<\rho(x)\leq2^{k_0}t$.\\
For $k\in(-\infty,k_0]$, it follows that $2^kt\geq2^{k_0}t\geq\rho(x).$ By (\ref{def:LMCS}), we get
	\begin{align}\label{eq:6}
		\sum_{k=-\infty}^{0}(2^kt)^{1-s}\Big(\frac{1}{\left|B(x,2^kt)\right|}\int_{\left|x-z\right|\leq 2^kt}\left|f(z)\right|^{p}dz\Big)^{\frac{1}{p}}
			\lesssim& t^{1-s}\left|B(x,t)\right|^\alpha;
	\end{align}
For $k\in(k_0,0]$, it follows that $2^kt\leq2^{k_0-1}t<\rho(x).$  Lemma \ref{lemm:fcampanato2} then yields
	\begin{align*}
		\Big(\frac{1}{\left|B(x,2^kt)\right|}\int_{\left|x-z\right|\leq 2^kt}\left|f(z)\right|^{p}dz\Big)^{\frac{1}{p}}\lesssim\begin{cases}\left(2^{k_0-k}\right)^{\alpha n}\left|B(x,2^kt)\right|^\alpha, & \alpha>0, \\ \left(1-k+k_0\right)\left|B(x,2^kt)\right|^\alpha, & \alpha \leq 0 .\end{cases}
	\end{align*}
	Similarly as in Lemma \ref{lem:kernelguji},  for $k\in(k_0,0]$ and all $\alpha\in\mathbb{R}$, it holds that
	\begin{align}\label{eq:7}
		\sum_{k=k_0+1}^{0}(2^kt)^{1-s}\Big(\frac{1}{\left|B(x,2^kt)\right|}\int_{\left|x-z\right|\leq 2^kt}\left|f(z)\right|^{p}dz\Big)^{\frac{1}{p}}\lesssim t^{1-s}\left|B(x,t)\right|^\alpha.
	\end{align}
	Hence, the desired conclusion for $t\geq\rho(x)$ follows from (\ref{eq:6}) and (\ref{eq:7}) immediately.\\
	\textbf{Case (ii)}: $t<\rho(x)$. For all ~$0\leq s<\min\{1,\,1+n\alpha\}$,  applying Lemma \ref{lem:kjlguji}~(i), together with the H\"{o}lder inequality and the Minkowski inequality gives
	\begin{align*}
	{}	&\left|\int_{\left|x-z\right|\leq t}\left|\Omega(x-z)\right|K{_j^\mathcal{L}}(x,z)\left[f(z)-f_{B(x,t)}\right]dz\right|\\&
		\lesssim\left[\rho(x)\right]^s\sum_{k=-\infty}^{0}(2^kt)^{1-s}\Big\{\Big(\frac{1}{\left|B(x,2^kt)\right|}\int_{\left|x-z\right|\leq 2^kt}\left|f(z)-f_{B(x,2^kt)}\right|^pdz\Big)^{\frac{1}{p}}\\&\quad+\left|f_{B(x,2^kt)}-f_{B(x,t)}\right|\Big\}
	\end{align*}
Using Lemma \ref{lem:fcampanato1}~(ii), the right side of the above inequality is bounded by
	\begin{align}\label{eq:8}
		t\left[\frac{\rho(x)}{t}\right]^s\sum_{k=-\infty}^{0}\left|B(x,t)\right|^\alpha\begin{cases}
			(2^k)^{1-s}, & \alpha > 0 ,\\(2^k)^{1-s+n\alpha}(2-k) & \alpha \leq 0,
		\end{cases}
		\lesssim t\left[\frac{\rho(x)}{t}\right]^s\left|B(x,t)\right|^\alpha.
	\end{align}
	On the other hand, Since $\rho(x)\backsim\rho(z)$, $\mu_{j,\Omega}(1)=0$, taking $\delta=\max\{1,\,\max\{0,n\alpha\}\}$, then Lemma \ref{lem:kjlguji}~(iii), Lemma \ref{lem:fcampanato1}~(i) and the H\"{o}lder inequality yield that
	\begin{align}\label{eq:9}
		{}&\Big|\int_{\left|x-z\right|\leq t}\left|\Omega(x-z)\right|K{_j^\mathcal{L}}(x,z)f_{B(x,t)}dz\Big|\\&
		\lesssim\left[\frac{t}{\rho(x)}\right]^{\delta}\Big(\int_{\left|x-z\right|\leq t}\left|\Omega(x-z)\right|^qdz\Big)^{\frac{1}{q}}\Big(\int_{\left|x-z\right|\leq t}\frac{1}{\left|x-z\right|^{(n-1)q'}}dy\Big)^{\frac{1}{q'}}\left|f_{B(x,t)}\right|\nonumber\\&
		\lesssim t\left[\frac{t}{\rho(x)}\right]^{\delta}\left|B(x,t)\right|^\alpha\begin{cases}\left(\frac{\rho\left(x\right)}{t}\right)^{n\alpha}, & \alpha>0, \\ 1+\log_2 \frac{\rho\left(x\right)}{t}, & \alpha \leq 0 ;
		\end{cases}
		\nonumber\\&
		\lesssim t\left[\frac{\rho(x)}{t}\right]^s\left|B(x,t)\right|^\alpha.
	\end{align}
	Summing up (\ref{eq:8}) and (\ref{eq:9}), we complete the proof for the case $t<\rho(x)$. The proof of Lemma \ref{lem:kernelguji2} is finished.
\end{proof}

\section{Proof of Theorem \ref{th1}}\label{section4}

The main ideas for the proof of Theorem \ref{th1} is inspired by Theorem 2 in \cite{GT1}, Theorem 4.1 in \cite{YYZ3} and Theorem 2.1-2.3 in \cite{HMY1}. 

\medskip

\textbf{Proof of Theorem \ref{th1}.}
Let $f\in\mathcal{E}{_\mathcal{L}^{\alpha,p}}(\mathbb{R}^n)$. Without loss of generality, assume that $\Vert f\Vert_{\mathcal{E}{_\mathcal{L}^{\alpha,p}}(\mathbb{R}^n)}=1.$
Let us start with the case when $\alpha\leq0$. For the balls $B=B(x_0,r)\in D_\rho,$ it is sufficient to show that
\begin{align}\label{eq:th1pf1}
	\int_{B}[\mu{_j^\mathcal{L}}(f)(x)]^pdx\lesssim\left|B\right|^{1+p\alpha}.
\end{align}
Decompose $f$ in the way that:
\begin{align}\label{eq:th1pfffenjie1}
	f=f\mathcal{X}_{B^*}+f\mathcal{X}_{(B^*)^c}=:f_1+f_2,
\end{align}
where $B^*$ is twice times extension of B with its center at $x_0$. It can be deduced from the $L^p(\mathbb{R}^n)\,(1<p<\infty)$ boundedness of $\mu{_j^\mathcal{L}}$ that
\begin{align}\label{eq:th1pf2}
	\int_{B}[\mu{_j^\mathcal{L}}(f_1)(x)]^pdx\lesssim\int_{B^*}\left|f(x)\right|^pdx\lesssim\left|B\right|^{1+p\alpha}.
\end{align}
Let $x\in B$, Lemma \ref{lem:fuzhuhanshu} indicates\ $\rho(x)\lesssim r$. By Lemma \ref{lem:kjlguji}~(i) and the H\"{o}lder inequality, take $l_1>\alpha n+1,$ then $\mu{_j^\mathcal{L}}(f_2)(x)$ is dominated by
\begin{align*}
	{}& \bigg(\int_{r}^{\infty}\bigg|\int_{r\leq\left|x-y\right|\leq t}[\rho(x)]^{l_1}\frac{\left|f(y)\right|}{\left|x-y\right|^{n+l_1-1}}dy\bigg|^2\frac{dt}{t^3}\bigg)^\frac{1}{2}\nonumber\\
	&\lesssim[\rho(x)]^{l_1}r^{-l_1+1}\bigg(\int_{r}^{\infty}\bigg|\sum_{k=0}^{[\log_2\frac{t}{r}]}(2^{-l_1+1})^k\bigg(\frac{1}{\left|B(x,2^{k+1}r)\right|}\int_{B(x,2^{k+1}r)}\left|f(y)\right|^pdy\bigg)^{\frac{1}{p}}\bigg|^2\frac{dt}{t^3}\bigg)^\frac{1}{2}\nonumber\\
	&\leq[\rho(x)]^{l_1}r^{-l_1+1}\bigg(\int_{r}^{\infty}\bigg|\sum_{k=0}^{[\log_2\frac{t}{r}]}(2^{-l_1+1})^k\left|B(x,2^{k+1}r)\right|^\alpha\bigg|^2\frac{dt}{t^3}\bigg)^\frac{1}{2}\nonumber\\
	&\lesssim[\rho(x)]^{l_1}r^{-l_1+1}\left|B\right|^\alpha\Big(\int_{r}^{\infty}\bigg|\sum_{k=0}^{[\log_2\frac{t}{r}]}(2^{n\alpha-l_1+1})^k\bigg|^2\frac{dt}{t^3}\Big)^\frac{1}{2}\nonumber\\
	&\lesssim[\rho(x)]^{l_1}r^{-l_1+1}\left|B\right|^\alpha\left(\int_{r}^{\infty}\frac{dt}{t^3}\right)^\frac{1}{2}\lesssim\left|B\right|^\alpha.
\end{align*}
Therefore it follows that $$\int_{B}[\mu{_j^\mathcal{L}}(f_2)(x)]^pdx\lesssim\left|B\right|^{1+\alpha p}.$$ Combining this with (\ref{eq:th1pf2}) yields (\ref{eq:th1pf1}).

Consider the case $B=B(x_0,r)\notin D_\rho.$ We need to show 
\begin{align}\label{eq:th1pf5}
	\int_{B}\big[\mu{_j^\mathcal{L}}(f)(x)-\mathop{\rm{essinf}}\limits_{B}\mu{_j^\mathcal{L}}(f)\big]^pdx\lesssim\left|B\right|^{1+p\alpha}.
\end{align}
Since $$0\leq\mu{_j^\mathcal{L}}(f)-\mathop{\rm{essinf}}\limits_{B} \mu{_j^\mathcal{L}}(f)\leq\big\{[\mu{_j^\mathcal{L}}(f)]^2-\mathop{\rm{essinf}}\limits_{B} [\mu{_j^\mathcal{L}}(f)]^2\big\}^\frac{1}{2},$$ it suffices to show that $$\int_{B}\big\{[\mu{_j^\mathcal{L}}(f)(x)]^2-\mathop{\rm{essinf}}\limits_{B}[\mu{_j^\mathcal{L}}(f)]^2\big\}^\frac{p}{2}dx\lesssim\left|B\right|^{1+p\alpha}.$$ 
To this purpose, we write
\begin{align}\label{eq:th1pf7}
	[\mu{_j^\mathcal{L}}(f)(x)]^2&=\int_{0}^{8r}\bigg|\int_{\left|x-y\right|\leq t}K{_j^\mathcal{L}}(x,y)f(y)dy\bigg|^2\frac{dt}{t^3}+\int_{8r}^{\infty}\bigg|\int_{\left|x-y\right|\leq t}K{_j^\mathcal{L}}(x,y)f(y)dy\bigg|^2\frac{dt}{t^3}\nonumber\\
	&=:[\mu{_{j,r}^\mathcal{L}}(f)(x)]^2+[\mu{_{j,\infty}^\mathcal{L}}(f)(x)]^2.
\end{align}
Then, we obtain
\begin{align*}
	&\int_{B}\Big\{[\mu{_j^\mathcal{L}}(f)(x)]^2-\mathop{\rm{essinf}}\limits_{B} [\mu{_j^\mathcal{L}}(f)]^2\Big\}^\frac{p}{2}dx\\&
	\lesssim\int_{B}[\mu{_{j,r}^\mathcal{L}}(f)(x)]^pdx+\int_{B}\Big\{[\mu{_{j,\infty}^\mathcal{L}}(f)(x)]^2-\mathop{\rm{essinf}}\limits_{B} [\mu{_{j,\infty}^\mathcal{L}}(f)]^2\Big\}^\frac{p}{2}dx\\&
	\lesssim\int_{B}[\mu{_{j,r}^\mathcal{L}}(f)(x)]^pdx+\left|B\right|\sup\limits_{x,y\in B}\left|[\mu{_{j,\infty}^\mathcal{L}}(f)(x)]^2-[\mu{_{j,\infty}^\mathcal{L}}(f)(y)]^2\right|^\frac{p}{2}.
\end{align*}
Hence, to show  (\ref{eq:th1pf5}), it is enough to prove
\begin{align}\label{eq:th1pf9}
	\int_{B}[\mu{_{j,r}^\mathcal{L}}(f)(x)]^pdx\lesssim\left|B\right|^{1+p\alpha},
\end{align}
and
\begin{align}\label{eq:th1pf10}
	\sup\limits_{x,y\in B}\left|[\mu{_{j,\infty}^\mathcal{L}}(f)(x)]^2-[\mu{_{j,\infty}^\mathcal{L}}(f)(y)]^2\right|^\frac{p}{2}\lesssim\left|B\right|^{p\alpha}.
\end{align}
Consider the inequality (\ref{eq:th1pf9}).  Split $f$ in the way
\begin{align}\label{eq:th1pfffenjie2}
	f=(f-f_B)\mathcal{X}_{B^*}+(f-f_B)\mathcal{X}_{(B^*)^c}+f_B=:f_1^*+f_2^*+f_B,
\end{align}
By the $L^p(\mathbb{R}^n)\,(1<p<\infty)$ boundedness of $\mu{_j^\mathcal{L}}$, (\ref{def:LMCS}) and Lemma \ref{lem:fcampanato1}~(ii), we have 
\begin{align}\label{eq:th1pf11}
		\int_{B}[\mu{_{j,r}^\mathcal{L}}(f_1^*)(x)]^pdx
	\lesssim\int_{B^*}\left|f(x)-f_{B^*}\right|^pdx+\int_{B^*}\left|f_{B^*}-f_B\right|^pdx\lesssim\left|B\right|^{1+p\alpha}.
\end{align}
Then Lemma \ref{lem:kjlguji}~(i), Lemma \ref{lem:fcampanato1}~(ii), (\ref{def:LMCS}) and the H\"{o}lder inequality yield that
\begin{align}\label{eq:th1pfn1}
		\Big|\int_{\left|x-y\right|\leq t}K{_j^\mathcal{L}}(x,y)f{_2^*}(y)dy\Big|=&\Big|\int_{r\leq\left|x-y\right|\leq t}K{_j^\mathcal{L}}(x,y)\left[f(y)-f_B\right]dy\Big|\nonumber\\
		\lesssim& \int_{r\leq\left|x-y\right|\leq t}\frac{\left|f(y)-f_B\right|}{\left|x-y\right|^{n-1}}dy\nonumber\\
		\lesssim&t\Big(\frac{t}{r}\Big)^{n-1}\Big\{\frac{1}{\left|B(x,t)\right|}\int_{B(x,t)}\left|f(y)-f_{B(x,t)}\right|dy+\frac{t}{r}\left|B\right|^\alpha\Big\}\\
		\lesssim&t\left(\frac{t}{r}\right)^{n-1}\Big\{\left|B(x,t)\right|^\alpha+\frac{t}{r}\left|B\right|^\alpha\Big\}\lesssim t\Big(\frac{t}{r}\Big)^{n}\left|B\right|^\alpha.\nonumber
\end{align}
Therefore, by (\ref{eq:th1pf7}) and the fact $r<t$, it hods that
\begin{align}\label{eq:th1pf12}
		\int_{B}[\mu{_{j,r}^\mathcal{L}}(f_2^*)(x)]^pdx\lesssim\int_{B}\Big[\int_{r}^{8r}\Big|t\left(\frac{t}{r}\right)^{n}\left|B\right|^\alpha\Big|^2\frac{dt}{t^3}\Big]^\frac{p}{2}dx\lesssim\left|B\right|^{1+p\alpha}.
	\end{align}
Using the Minkowski inequality and the fact that $\mu{_j}(1)=0$, we have 
\begin{align}\label{eq:fb1}
	\mu{_{j,r}^\mathcal{L}}(f_B)(x)\leq \bigg[\int_{0}^{8r}\bigg|\int_{\left|x-y\right|\leq t}\left[K{_j^\mathcal{L}}(x,y)-K_j(x,y)\right]f{_B}dy\bigg|^2\frac{dt}{t^3}\bigg]^\frac{1}{2}.
\end{align}
Lemma \ref{lem:kjlguji}~(iii) then implies $$\int_{\left|x-y\right|\leq t}|K{_j^\mathcal{L}}(x,y)-K_j(x,y)|dy\lesssim\int_{\left|x-y\right|\leq t}\frac{1}{|x-y|^{n-1}}\bigg(\frac{|x-y|}{\rho(y)}\bigg)^\delta dy.$$ Note that for any $x \in B,\ \rho(x)\backsim\rho(x_0)$. Since $\left|x-y\right|\leq t<8r<8\rho(x_0)$, then $\rho(x)\backsim\rho(y)$. Therefore, $\rho(x_0)\backsim\rho(x)\backsim\rho(y)$ and  $$
\int_{\left|x-y\right|\leq t}\frac{1}{|x-y|^{n-1}}\bigg(\frac{|x-y|}{\rho(y)}\bigg)^\delta dy\lesssim\bigg(\frac{t}{\rho(x_0)}\bigg)^\delta t.$$ On the other hand, by Lemma \ref{lem:fcampanato1}~(i): $\left|f_B\right|\lesssim\big(1+\log_2\frac{\rho(x_0)}{r}\big)\left|B\right|^\alpha$ and $r<\rho(x_0)$, we take $\delta=\delta_1>1$, then the left side in (\ref{eq:fb1}) is dominated by
$\left|B\right|^\alpha\big(\frac{r}{\rho(x_0)}\big)^{\delta_1-1}
	\leq\left|B\right|^\alpha.$
Hence, we have
\begin{align}\label{eq:th1pf13}
		\int_{B}[\mu{_{j,r}^\mathcal{L}}(f_B)(x)]^pdx\lesssim\left|B\right|^{1+p\alpha}.
	\end{align}
Suming up (\ref{eq:th1pf11}), (\ref{eq:th1pf12}) and (\ref{eq:th1pf13}), we obtain (\ref{eq:th1pf9}). 

It remains to prove inequality (\ref{eq:th1pf10}). For any $x,y\in B,$ it can be deduced from Lemma \ref{lem:kernelguji} whenever $s=0$ and $\alpha\leq0$ that 
\begin{align}\label{eq:th1pf14}
	&[\mu{_{j,\infty}^\mathcal{L}}(f)(x)]^2-[\mu{_{j,\infty}^\mathcal{L}}(f)(y)]^2\nonumber\\&
	\leq\int_{8r}^{\infty}\Big|\int_{\left|x-z\right|\leq t}K{_j^\mathcal{L}}(x,z)f(z)dz\Big|^2\frac{dt}{t^3}+\int_{8r}^{\infty}\Big|\int_{\left|y-z\right|\leq t}K{_j^\mathcal{L}}(y,z)f(z)dz\Big|^2\frac{dt}{t^3}\\&
	\lesssim\int_{8r}^{\infty}\Big|t\Big(\frac{t}{r}\Big)^{n\alpha}\left|B\right|^\alpha\Big|^2\frac{dt}{t^3}\lesssim\left|B\right|^{2\alpha}\nonumber,
\end{align}
which gives inequality  (\ref{eq:th1pf10}) and therefore, the proof of inequality (\ref{eq:th1pf5}) is finished.

Next, let us consider the case when $\alpha>0$. We need to prove (\ref{eq:th1pf1}), (\ref{eq:th1pf9}) and (\ref{eq:th1pf10}). The proof of (\ref{eq:th1pf1}) is totally the same as before. For (\ref{eq:th1pf9}), we only need to make some minor modifications to inequalities (\ref{eq:th1pf12}) and (\ref{eq:th1pf13}). By Lemma \ref{lem:fcampanato1}~(ii):\ $\left|f_{B(x,t)}-f_B\right|\lesssim\left(\frac{t}{r}\right)^{n\alpha}\left|B\right|^\alpha(\alpha>0)$, we can obtain the analogue of (\ref{eq:th1pfn1}) as follows:
\begin{align*}
	\Big|\int_{\left|x-y\right|\leq t}K{_j^\mathcal{L}}(x,y)f{_2^*}(y)dy\Big|
	\lesssim&t\Big(\frac{t}{r}\Big)^{n-1}\Big\{\frac{1}{\left|B(x,t)\right|}\int_{B(x,t)}\left|f(y)-f_{B(x,t)}\right|dy+\left(\frac{t}{r}\right)^{n\alpha}\left|B\right|^\alpha\Big\}\\
	\lesssim&t\Big(\frac{t}{r}\Big)^{n-1}\Big\{\left|B(x,t)\right|^\alpha+\left(\frac{t}{r}\right)^{n\alpha}\left|B\right|^\alpha\Big\}\lesssim t\Big(\frac{t}{r}\Big)^{n+n\alpha-1}\left|B\right|^\alpha.
\end{align*}
Similarly we have
\begin{align*}
	\int_{B}[\mu{_{j,r}^\mathcal{L}}(f_2^*)(x)]^pdx\lesssim\int_{B}\Big[\int_{r}^{8r}\Big|t\Big(\frac{t}{r}\Big)^{n+n\alpha-1}\left|B\right|^\alpha\Big|^2\frac{dt}{t^3}\Big]^\frac{p}{2}dx\lesssim\left|B\right|^{1+p\alpha}.
\end{align*}
Note that the difference in proving (\ref{eq:th1pf13}) lies in Lemma \ref{lem:fcampanato1}~(i):\ $\left|f_B\right|\lesssim\big(\frac{\rho(x_0)}{r}\big)^{n\alpha}\left|B\right|^\alpha$. Since $r<\rho(x_0)$, we take $\delta=\delta_2>n\alpha$ and now the expression in (\ref{eq:fb1}) is controled by $\big(\frac{\rho(x_0)}{r}\big)^{n\alpha}\left|B\right|^\alpha\big[\int_{0}^{4r}\big(\frac{t}{\rho(x_0)}\big)^{2\delta_2} t^2\frac{dt}{t^3}\big]^\frac{1}{2}
	\lesssim\left|B\right|^\alpha.$ Consequently, it remains to prove (\ref{eq:th1pf10}). Note that in the proof of the case $\alpha\leq0$,   (\ref{eq:th1pf10}) holds since Lemma \ref{lem:kernelguji}  is true. So in the case $\alpha>0$, we cannot use (\ref{eq:th1pf14}) to obtain (\ref{eq:th1pf10}) anymore and more delicate decomposition will be needed. 
	
	For any $x,y\in B=B(x_0,r)\notin D_\rho$. Lemma \ref{lem:fuzhuhanshu} gives $\rho(x)\backsim\rho(x_0)\backsim\rho(y)$ and
\begin{align*}
&	\left|[\mu{_{j,\infty}^\mathcal{L}}(f)(x)]^2-[\mu{_{j,\infty}^\mathcal{L}}(f)(y)]^2\right|\\&
	\lesssim\int_{8r}^{\infty}\Big|\int_{\left|x-z\right|\leq t}K{_j^\mathcal{L}}(x,z)f(z)dz-\int_{\left|y-z\right|\leq t}K{_j^\mathcal{L}}(y,z)f(z)dz\Big|\\
	&\times\bigg\{\Big|\int_{\left|x-z\right|\leq t}K{_j^\mathcal{L}}(x,z)f(z)dz\Big|+\Big|\int_{\left|y-z\right|\leq t}K{_j^\mathcal{L}}(y,z)f(z)dz\Big|\bigg\}\frac{dt}{t^3}.
\end{align*}
For any $ x\in B=B(x_0,r)$ and $t\geq8r$, denote 
\begin{align*}
	E_f(x,t):=\int_{\left|x-z\right|\leq t}K{_j^\mathcal{L}}(x,z)f(z)dz.
\end{align*}
Then Lemma \ref{lem:kernelguji} implies
that for any $x\in\mathbb{R}^n,\ t>0,\ 0\leq s<\min\{1,\,1+n\alpha\}$, there exists a positive constant such that for any $f\in\mathcal{E}{_\mathcal{L}^{\alpha,p}}(\mathbb{R}^n),$ 
\begin{align}\label{eq:th2pf10}
	\left|E_f(x,t)\right|\lesssim t\left[\frac{\rho(x)}{t}\right]^s\left|B(x,t)\right|^\alpha\lesssim t\left[\frac{\rho(x)}{t}\right]^s\left(\frac{t}{r}\right)^{n\alpha}\left|B\right|^\alpha.
\end{align}
For each fixed $x\in B$ and $ t\geq8r$, set
\begin{align*}
	H_f(x,y,t):=\left|E_f(x,t)-E_f(x,8r)-[E_f(y,t)-E_f(y,8r)]\right|.
\end{align*}
Since $$\left|E_f(x,t)-E_f(y,t)\right|\leq\left|E_f(x,8r)\right|+\left|E_f(y,8r)\right|+H_f(x,y,t),$$ the following task is to estimate the right side of the inequality respectively. Indeed, applying Lemma \ref{lem:kernelguji}, we have $\left|E_f(x,8r)\right|\lesssim r\left|B\right|^\alpha,\ \left|E_f(y,8r)\right|\lesssim r\left|B\right|^\alpha.$ Thus when $n\alpha-1<0$, combining (\ref{eq:th2pf10}) with $s=0$ yields that
\begin{align}\label{eq:th2pf5}
		\int_{8r}^{\infty}\left|E_f(x,8r)\right|\left|E_f(x,t)\right|\frac{dt}{t^3}\lesssim\int_{8r}^{\infty}r\left|B\right|^\alpha t\left(\frac{t}{r}\right)^{n\alpha}\left|B\right|^\alpha\frac{dt}{t^3}
		\lesssim\left|B\right|^{2\alpha}.
	\end{align}
Similarly, we obtain
\begin{align}\label{eq:th2pf6}
	\int_{8r}^{\infty}\left|E_f(y,8r)\right|\left|E_f(x,t)\right|\frac{dt}{t^3}\lesssim\left|B\right|^{2\alpha}.
\end{align}
Therefore the proof of inequality (\ref{eq:th1pf10}) can be reduced to prove that for any $x,y\in B$,
\begin{align}\label{eq:th2pf7}
	\int_{8r}^{\infty}H_f(x,y,t)\left|E_f(x,t)\right|\frac{dt}{t^3}\lesssim\left|B\right|^{2\alpha}.
\end{align}
To prove (\ref{eq:th2pf7}), we need to make a delicate decomposition on the integration region. Let 
\begin{align*}\quad \quad \quad  E_1=\{8r\leq\left|x-z\right|\leq t, \left|y-z\right|\geq t\}\ ,\quad E_2=
	\{8r\leq\left|y-z\right|\leq t, \left|x-z\right|\geq t\}, \quad\ \ \quad \quad \quad  \end{align*} 
\begin{align*}E_3=	\{8r\leq\left|x-z\right|\leq t, \left|y-z\right|\leq 8r\},\quad E_4=\{8r\leq\left|y-z\right|\leq t,\left|x-z\right|\leq 8r\},
	\ \end{align*}  $$ E_5=\{8r\leq\left|x-z\right|\leq t, 8r\leq\left|y-z\right|\leq t\},
\qquad\qquad\qquad\qquad\qquad\qquad\qquad\qquad\   $$
Now we dominate $	H_f(x,y,t)$ by five terms,
\begin{align*}
&\sum_{i=1}^4\Big|\int_{E_i}\left[K{_j^\mathcal{L}}(y,z)f(z)\right]dz\Big|+ \Big|\int_{E_5}\left[K{_j^\mathcal{L}}(x,z)-K{_j^\mathcal{L}}(y,z)\right]\left[f(z)\right]dz\Big|
	=:\sum_{i=1}^5H_{f,i}(x,y,t).
\end{align*}
Consider first the contribution of $H_{f,1}(x,y,t)$. because of $\mu_j(1)=0$ and (\ref{eq:th2pf10}), we get
\begin{align*}
	\int_{8r}^{\infty}H_{f,1}(x,y,t)\left|E_f(x,t)\right|\frac{dt}{t^3}&
	\lesssim\frac{\left|B\right|^\alpha}{r^{n\alpha}}\Big|\int_{8r}^{\infty}\int_{E_1}\left[K{_j^\mathcal{L}}(x,z)\left(f(z)-f_{B(x,r)}\right)\right]dz\Big|\frac{dt}{t^{2-n\alpha}}\\
	&\quad+\frac{\left|B\right|^\alpha}{r^{n\alpha}}\Big|\int_{8r}^{\infty}\int_{E_1}\left[K{_j^\mathcal{L}}(x,z)-K_{j}(x,z) \right]f_{B(x,r)}dz\Big|\frac{dt}{t^{2-n\alpha}}\\&
	=:I+II.
\end{align*}
First, let us consider $I$. By the Minkowski  inequality and Lemma \ref{lem:kjlguji}~(i), we obtain 
\begin{align*}
	I&\leq\frac{\left|B\right|^\alpha}{r^{n\alpha}}\int_{(B(x,4r))^c}\left|K{_j^\mathcal{L}}(x,z)\left(f(z)-f_{B(x,r)}\right)\right|\bigg(\int_{\left|x-z\right|}^{\left|y-z\right|}\frac{dt}{t^{2-n\alpha}}\bigg)dz\\
	&\lesssim\frac{\left|B\right|^\alpha}{r^{n\alpha}}\int_{(B(x,4r))^c}\frac{\left|f(z)-f_{B(x,r)}\right|}{\left|x-z\right|^{n-1}}\left|\frac{1}{\left|y-z\right|^{1-n\alpha}}-\frac{1}{\left|x-z\right|^{1-n\alpha}}\right|dz.
\end{align*}
Note that $z\in(B(x,4r))^c$, $x,y\in B(x_0,r)\subset B(x,2r),$ it follows that $\left|x-z\right|\backsim\left|y-z\right|$. The Mean Value Theorem then gives
\begin{align}\label{eq:th2pf11}
	\bigg|\frac{1}{\left|y-z\right|^{1-n\alpha}}-\frac{1}{\left|x-z\right|^{1-n\alpha}}\bigg|\lesssim\frac{r}{\left|x-z\right|^{2-n\alpha}}.
\end{align}
This, combining with (\ref{def:LMCS}) and Lemma \ref{lem:fcampanato1}~(ii), when $2n\alpha-1<0$, yields that
\begin{align*}
	I\lesssim& r\frac{\left|B\right|^\alpha}{r^{n\alpha}}\sum_{k=1}^{\infty}\int_{B(x,2^{k+1}r)\backslash B(x,2^kr)}\frac{\left|f(z)-f_{B(x,2^{k+1}r)}\right|+\left|f_{B(x,2^{k+1}r)}-f_{B(x,r)}\right|}{\left|x-z\right|^{n+1-n\alpha}}dz\\
	\lesssim&\left|B\right|^\alpha\sum_{k=1}^{\infty}\left(2^k\right)^{n\alpha-1}\left[\left|B(x,2^{k+1}r)\right|^\alpha+2^{kn\alpha}\left|B\right|^\alpha\right]
	\lesssim\left|B\right|^{2\alpha}.
\end{align*}
For the second term $II$. Using the Minkowski inequality, Lemma \ref{lem:fuzhuhanshu}, Lemma \ref{lem:kjlguji} (iii), Lemma \ref{lem:fcampanato1} and (\ref{eq:th2pf11}), and taking $n\alpha<\delta=\delta_4<\frac{1-n\alpha}{2}$ since $3n\alpha-1<0$, it follows that 
\begin{align*}
	II&\leq\frac{\left|B\right|^\alpha}{r^{n\alpha}}\int_{(B(x,4r))^c}\left|K{_j^\mathcal{L}}(x,z)-K_{j}(x,z)\right|\left| f_{B(x,r)}\right|\Big(\int_{\left|x-z\right|}^{\left|y-z\right|}\frac{dt}{t^{2-n\alpha}}\Big)dz\\
	&\lesssim\frac{\left|B\right|^{2\alpha}}{r^{n\alpha}}\Big(\frac{\rho(x_0)}{r}\Big)^{n\alpha}\int_{(B(x,4r))^c}\frac{1}{\left|x-z\right|^{n-\delta_4-1}\left(\rho(z)\right)^{\delta_4}}\bigg|\frac{1}{\left|y-z\right|^{1-n\alpha}}-\frac{1}{\left|x-z\right|^{1-n\alpha}}\bigg|dz\\
	&\lesssim\frac{\left|B\right|^{2\alpha}}{r^{n\alpha}}\Big(\frac{\rho(x_0)}{r}\Big)^{n\alpha}\int_{(B(x,4r))^c}\frac{r}{\left|x-z\right|^{n-\delta_4+1-n\alpha}\left(\rho(x)\right)^{\delta_4}}\Big(\frac{\rho(x)+\left|x-z\right|}{\rho(x)}\Big)^{l_0\delta_4}dz\\
	&\lesssim\frac{\left|B\right|^{2\alpha}}{r^{n\alpha}}\Big(\frac{\rho(x_0)}{r}\Big)^{n\alpha}\sum_{k=1}^{\infty}\int_{B(x,2^{k+1}r)\backslash B(x,2^kr)}\frac{r}{\left(2^kr\right)^{n-\delta_4+1-n\alpha}\left(\rho(x)\right)^{\delta_4}}\Big(1+\frac{2^{k+1}r}{\rho(x)}\Big)^{\delta_4}dz\\
	&\lesssim\left|B\right|^{2\alpha}\Big(\frac{\rho(x_0)}{r}\Big)^{n\alpha}\Big(\frac{r}{\rho(x_0)}\Big)^{\delta_4}\sum_{k=1}^{\infty}(2^k)^{n\alpha+\delta_4-1}2^{k\delta_4}\lesssim\left|B\right|^{2\alpha}.
\end{align*}
Therefore
\begin{align}\label{eq:th2pf12}
	\int_{8r}^{\infty}H_{f,1}(x,y,t)\left|E_f(x,t)\right|\frac{dt}{t^3}\lesssim\left|B\right|^{2\alpha}.
\end{align}
Similarly, we have
\begin{align}\label{eq:th2pf13}
	\int_{8r}^{\infty}H_{f,2}(x,y,t)\left|E_f(x,t)\right|\frac{dt}{t^3}\lesssim\left|B\right|^{2\alpha}.
\end{align}

Consider now the term $H_{f,3}(x,y,t)$.
By $\mu_j(1)=0$, we get 
\begin{align*}
	H_{f,3}(x,y,t)\leq&\Big|\int_{E_3}K{_j^\mathcal{L}}(x,z)\left[f(z)-f_{B(x,r)}\right]dz\Big|+\Big|\int_{E_3}\left[K{_j^\mathcal{L}}(x,z)-K_j(x,z)\right]f_{B(x,r)}dz\Big|.
\end{align*}
Notice that $x,y\in B(x_0,r),$ $\left|x-y\right|<2r$, then $\left|x-z\right|\leq\left|x-y\right|+\left|y-z\right|<10r<10\rho(x_0)$, which implies that $\rho(x_0)\backsim\rho(z).$ Using Lemma \ref{lem:kjlguji}~(i), (\ref{def:LMCS}) and Lemma \ref{lem:fcampanato1},  take $\delta=\delta_3>n\alpha$, we get
\begin{align*}
	H_{f,2}(x,y,t)
	\lesssim&\int_{8r\leq\left|x-z\right|\leq 10r}\frac{\left|f(z)-f_{B(x,r)}\right|}{\left|x-z\right|^{n-1}}dz+\int_{8r\leq\left|x-z\right|\leq 10r}\frac{\left|f_{B(x,r)}\right|}{\left|x-z\right|^{n-\delta_3-1}\left[\rho(z)\right]^{\delta_3}}dz\\
	\lesssim& \frac{1}{r^{n-1}}\int_{\left|x-z\right|\leq10r}\left|f(z)-f_{B(x,10r)}\right|dz+\frac{1}{r^{n-1}}\int_{\left|x-z\right|\leq10r}\left|f_{B(x,10r)}-f_{B(x,r)}\right|dz\\
	&+r\Big(\frac{r}{\rho(x_0)}\Big)^{\delta_3}\frac{1}{r^n}\int_{\left|x-z\right|\leq 10r}\Big(\frac{\rho(x_0)}{r}\Big)^{n\alpha}\left|B\right|^\alpha dz\\&\lesssim r\left|B\right|^\alpha.
\end{align*}
When $n\alpha-1<0$, this inequality, together with  (\ref{eq:th2pf10}) ($s=0$) yields
\begin{align}\label{eq:th2pf8}
		\int_{8r}^{\infty}H_{f,3}(x,y,t)\left|E_f(x,t)\right|\frac{dt}{t^3}\lesssim\int_{8r}^{\infty}r\left|B\right|^\alpha t\left(\frac{t}{r}\right)^{n\alpha}\left|B\right|^\alpha\frac{dt}{t^3}\lesssim\left|B\right|^{2\alpha}.
	\end{align}
The same reasoning applies to $H_{f,4}(x,y,t)$ gives
\begin{align}\label{eq:th2pf9}
	\int_{8r}^{\infty}H_{f,4}(x,y,t)\left|E_f(x,t)\right|\frac{dt}{t^3}\lesssim\left|B\right|^{2\alpha}.
\end{align}

It remains to consider the contribution of $H_{f,5}(x,y,t)$. From the fact $\mu_j(1)=0$ and Lemma \ref{lem:kernelguji}, taking $s=m$, where the range of $m$ will be given later, we obtain
\begin{align*}
	\int_{8r}^{\infty}H_{f,5}(x,y,t)\left|E_f(x,t)\right|\frac{dt}{t^3}&
	\lesssim\frac{\left|B\right|^\alpha}{r^{n\alpha}}\Big|\int_{8r}^{\infty}\int_{E_5}(K{_j^\mathcal{L}}(x,z)-K{_j^\mathcal{L}}(y,z))(f(z)-f_{B(x,r)})dz\Big|\frac{dt}{t^{2-n\alpha}}\\
	&\quad+\frac{\left[\rho(x)\right]^m\left|B\right|^\alpha}{r^{n\alpha}}\Big|\int_{8r}^{\infty}\int_{E_5}
	K_{j}(x,y,z)
	 f_{B(x,r)}dz\Big|\frac{dt}{t^{2-n\alpha+m}}\\&
	=:III+IV,
\end{align*}
where $K_{j}(x,y,z)=(K{_j^\mathcal{L}}(x,z)-K_{j}(x,z))-(K{_j^\mathcal{L}}(y,z)-K_{j}(y,z) )$.

For the term $III$. By the Minkowski inequality, the H\"{o}lder inequality, Lemma \ref{lem:kjlguji}~(ii) and Lemma \ref{lem:fcampanato1}, we take $\beta>2n\alpha$ and obtain 
\begin{align*}
	III\lesssim& r^\beta\frac{\left|B\right|^\alpha}{r^{n\alpha}}\int_{(B(x,4r))^c}\frac{\left|f(z)-f_{B(x,r)}\right|}{\left|x-z\right|^{n-1+\beta}}\Big(\int_{\left|x-z\right|}^{\infty}\frac{dt}{t^{2-n\alpha}}\Big)dz\\
	\lesssim& \left|B\right|^\alpha\sum_{k=1}^{\infty}\bigg\{\frac{\left(2^k\right)^{n\alpha-\beta}}{\left|B(x,2^{k+1}r)\right|}\int_{B(x,2^{k+1}r)}\left|f(z)-f_{B(x,2^{k+1}r)}\right|dz\\
	&+\frac{\left(2^k\right)^{n\alpha-\beta}}{\left|B(x,2^{k+1}r)\right|}\int_{B(x,2^{k+1}r)}\left|f_{B(x,2^{k+1}r)}-f_{B(x,r)}\right|dz\bigg\}\\
	\lesssim&\left|B\right|^\alpha\sum_{k=1}^{\infty}\left(2^k\right)^{n\alpha-\beta}\left[\left|B(x,2^{k+1}r)\right|^\alpha+2^{kn\alpha}\left|B\right|^\alpha\right]  \lesssim\left|B\right|^{2\alpha}.
\end{align*}
Let us consider $IV$. Since $x,y\in B(x_0,r),~8r\leq\left|x-z\right|\leq t, ~8r\leq\left|y-z\right|\leq t$, we obtain
$\left|x-z\right|\leq \frac{5}{4}\left|y-z\right|, \quad \left|y-z\right|\leq\frac{5}{4}\left|x-z\right|.
$ Hence $\left|x-z\right|\backsim\left|y-z\right|.$ By Remark \ref{lem:kjlcha2} and Lemma \ref{lem:kjlguji}~(iii), we have
\begin{align}\label{eq:keyxfj}
	|K_{j}(x,y,z)|&\lesssim\big|[\widetilde{K_j^\mathcal{L}}(x, z)-\widetilde{K_j^\Delta}(x, z)]-[\widetilde{K_j^\mathcal{L}}(y, z)-\widetilde{K_j^\Delta}(y, z)]\big||x-z|\nonumber\\&\quad+|\widetilde{K_j^\mathcal{L}}(y, z)-\widetilde{K_j^\Delta}(y, z)|\big||x-z|-|y-z|\big|\nonumber\\&
	\lesssim\frac{|x-y|^\gamma}{|x-z|^{n+\gamma-1}}\left(\frac{|x-z|}{\rho(x)}\right)^{\eta}+\frac{r}{|x-z|^{n}}\left(\frac{|x-z|}{\rho(z)}\right)^\delta.
\end{align}
Thus, $IV$ can be divided into two parts.
\begin{align*}
	IV\lesssim&\frac{\left[\rho(x)\right]^m\left|B\right|^\alpha}{r^{n\alpha}}\int_{8r}^{\infty}\Big|\int_{E_5}\frac{|x-y|^\gamma}{|x-z|^{n+\gamma-1}}\left(\frac{|x-z|}{\rho(x)}\right)^{\eta} f_{B(x,r)}dz\Big|\frac{dt}{t^{2-n\alpha+m}}\\
	&+\frac{\left[\rho(x)\right]^m\left|B\right|^\alpha}{r^{n\alpha}}\int_{8r}^{\infty}\Big|\int_{E_5}\frac{r}{|x-z|^{n}}\left(\frac{|x-z|}{\rho(z)}\right)^\delta f_{B(x,r)}dz\Big|\frac{dt}{t^{2-n\alpha+m}}\\
	=:&IV_1+IV_2.
\end{align*}
To treat $IV_1$, we choose $s=m$ in Lemma \ref{lem:kernelguji} with $0<m=\eta-\frac{\gamma}{2}<1$. It can be deduced from $2n\alpha-\gamma<0$ that $n\alpha-\eta+m<0$ and $n\alpha+\eta-\gamma-m<0$. Then by the Minkowski inequality and Remark \ref{lem:kjlcha2}, we have
\begin{align*}
	IV_1
	\lesssim&\left[\rho(x)\right]^m\frac{\left|B\right|^{2\alpha}}{r^{n\alpha}}\Big(\frac{\rho(x_0)}{r}\Big)^{n\alpha}\int_{(B(x,4r))^c}\frac{|x-y|^\gamma}{|x-z|^{n+\gamma-1}}\Big(\frac{|x-z|}{\rho(x)}\Big)^{\eta}\Big(\int_{\left|x-z\right|}^{\infty}\frac{dt}{t^{2-n\alpha+m}}\Big)dz\\
	\lesssim&\left[\rho(x)\right]^m\frac{\left|B\right|^{2\alpha}}{r^{n\alpha}}\Big(\frac{\rho(x_0)}{r}\Big)^{n\alpha}\sum_{k=1}^{\infty}\int_{B(x,2^{k+1}r)\backslash B(x,2^kr)}\frac{r^{\gamma}}{\left(2^kr\right)^{n+\gamma-\eta+m-n\alpha}\left(\rho(x)\right)^\eta}dz\\
	\lesssim&\left|B\right|^{2\alpha}\Big(\frac{r}{\rho(x_0)}\Big)^{\eta-n\alpha-m}\sum_{k=1}^{\infty}(2^k)^{n\alpha+\eta-\gamma-m}\\
	\lesssim&\left|B\right|^{2\alpha}\Big(\frac{r}{\rho(x_0)}\Big)^{\eta-n\alpha-m}\lesssim\left|B\right|^{2\alpha}.
\end{align*}
For $IV_2$, we pick $m=0$. By Minkowski integral inequality, we have 
\begin{align*}
	IV_2&\lesssim\frac{\left|B\right|^{2\alpha}}{r^{n\alpha}}\Big(\frac{\rho(x_0)}{r}\Big)^{n\alpha}\int_{(B(x,4r))^c}\frac{r}{\left|x-z\right|^{n-\delta+1-n\alpha}\left(\rho(z)\right)^{\delta}}dz
\end{align*}
Using the same method as what we have done for $II$ in the case $m=0$, it follows that $IV_2\lesssim\left|B\right|^{2\alpha}$ and therefore
\begin{align}\label{eq:th2pf14}
	\int_{8r}^{\infty}H_{f,5}(x,y,t)\left|E_f(x,t)\right|\frac{dt}{t^3}\lesssim\left|B\right|^{2\alpha}.
\end{align}
Then (\ref{eq:th2pf7}) follows from (\ref{eq:th2pf8}), (\ref{eq:th2pf9}), (\ref{eq:th2pf12}), (\ref{eq:th2pf13}) and (\ref{eq:th2pf14}).
Combining (\ref{eq:th2pf7})  with (\ref{eq:th2pf5}) and (\ref{eq:th2pf6}), we obtain (\ref{eq:th1pf10}) and  complete the proof of Theorem \ref{th1}.
\qed

\section{Proof of Theorem \ref{th3} }\label{section5}

We are in the position to prove Theorem \ref{th3}. When $\alpha\leq 0$, we do not need to assume the smoothness condition to obtain the boundedness of $\mu{_{j,\Omega}^\mathcal{L}}$. However, for the case $\alpha>0$, inspired by \cite{DLX2}, we need to utilize a kind of $L^q$-Dini condition (\ref{def:lqDini2}). 

\medskip

\textbf{Proof of Theorem \ref{th3}.}
Assume without loss of generality that $f\in\mathcal{E}{_\mathcal{L}^{\alpha,p}}(\mathbb{R}^n)$ and $\Vert f\Vert_{\mathcal{E}{_\mathcal{L}^{\alpha,p}}(\mathbb{R}^n)}=1.$ We begin with the case $\alpha\leq0$, which will be proved in a way similar to Theorem \ref{th1}.  For the ball $B=B(x_0,r)\in D_\rho,$ it suffices to show that
\begin{align}\label{eq:th3pf1}
	\int_{B}[\mu{_{j,\Omega}^\mathcal{L}}(f)(x)]^pdx\lesssim\left|B\right|^{1+p\alpha}.
\end{align}
Decompose $f$ in the same way as in (\ref{eq:th1pfffenjie1}). By the $L^p(\mathbb{R}^n)$ boundedness of $\mu{_{j,\Omega}^\mathcal{L}}$, we have 
\begin{align}\label{eq:th3pf2}
	\int_{B}[\mu{_{j,\Omega}^\mathcal{L}}(f_1)(x)]^pdx\lesssim\int_{B^*}\left|f(x)\right|^pdx\lesssim\left|B\right|^{1+p\alpha}.
\end{align}
Let $x\in B$, then\ $\rho(x)\lesssim r$. By Lemma \ref{lem:kjlguji}~(i) and the H\"{o}lder inequality, we choose $l_2>\alpha n+1,$ and can dominate $\mu{_{j,\Omega}^\mathcal{L}}(f_2)(x)$ by
\begin{align*}
& \bigg(\int_{r}^{\infty}\bigg|\int_{r\leq\left|x-y\right|\leq t}\big|\Omega(x-y)\big|[\rho(x)]^{l_2}\frac{|f(y)|}{\left|x-y\right|^{n+l_2-1}}dy\bigg|^2\frac{dt}{t^3}\bigg)^\frac{1}{2}\nonumber\\&
	\lesssim [\rho(x)]^{l_2}r^{-l_2+1}\bigg[\int_{r}^{\infty}\bigg|\sum_{k=0}^{[\log_2\frac{t}{r}]}(2^{-l_2+1})^k\Big(\frac{1}{\left|B(x,2^{k+1}r)\right|}\int_{B(x,2^{k+1}r)}\left|\Omega(x-y)\right|^{q}dy\Big)^{\frac{1}{q}}\nonumber\\
	&\times\Big(\frac{1}{\left|B(x,2^{k+1}r)\right|}\int_{B(x,2^{k+1}r)}\left|f(y)\right|^{q'}dy\Big)^{\frac{1}{q'}}\bigg|^2\frac{dt}{t^3}\bigg]^\frac{1}{2}\nonumber\\&
	\lesssim[\rho(x)]^{l_2}r^{-l_2+1}\bigg[\int_{r}^{\infty}\bigg|\sum_{k=0}^{[\log_2\frac{t}{r}]}(2^{-l_2+1})^k\left(\frac{1}{\left|B(x,2^{k+1}r)\right|}\int_{B(x,2^{k+1}r)}\left|f(y)\right|^{p}dy\right)^{\frac{1}{p}}\bigg|^2\frac{dt}{t^3}\bigg]^\frac{1}{2}\nonumber\\&
	\lesssim[\rho(x)]^{l_2}r^{-l_2+1}\left|B\right|^\alpha\bigg(\int_{r}^{\infty}\bigg|\sum_{k=0}^{[\log_2\frac{t}{r}]}(2^{n\alpha-l_2+1})^k\bigg|^2\frac{dt}{t^3}\bigg)^\frac{1}{2}\lesssim \left|B\right|^\alpha.
\end{align*}
Therefore
\begin{align}\label{eq:th3pf4}
	\int_{B}[\mu{_{j,\Omega}^\mathcal{L}}(f_2)(x)]^pdx\lesssim\left|B\right|^{1+\alpha p}.
\end{align}
Combining this with (\ref{eq:th3pf2}) yields (\ref{eq:th3pf1}).

Now assume $B=B(x_0,r)\notin D_\rho,$ we claim that 
\begin{align}\label{eq:th3pf5}
	\int_{B}\big[\mu{_{j,\Omega}^\mathcal{L}}(f)(x)-\mathop{\rm{essinf}}\limits_{B} \mu{_{j,\Omega}^\mathcal{L}}(f)\big]^pdx\lesssim\left|B\right|^{1+p\alpha}.
\end{align}
Since $0\leq\mu{_{j,\Omega}^\mathcal{L}}(f)-\mathop{\rm{essinf}}\limits_{B} \mu{_{j,\Omega}^\mathcal{L}}(f)\leq\big\{[\mu{_{j,\Omega}^\mathcal{L}}(f)]^2-\mathop{\rm{essinf}}\limits_{B} [\mu{_{j,\Omega}^\mathcal{L}}(f)]^2\big\}^\frac{1}{2},$ it suffices to show that $$\int_{B}\big\{[\mu{_{j,\Omega}^\mathcal{L}}(f)(x)]^2-\mathop{\rm{essinf}}\limits_{B} [\mu{_{j,\Omega}^\mathcal{L}}(f)]^2\big\}^\frac{p}{2}dx\lesssim\left|B\right|^{1+p\alpha}.$$ To this aim, write 
\begin{align}\label{eq:th3pf7}
[\mu{_{j,\Omega}^\mathcal{L}}(f)(x)]^2\nonumber&
	=\int_{0}^{8r}\bigg|\int_{\left|x-y\right|\leq t}\left|\Omega(x-y)\right|K{_j^\mathcal{L}}(x,y)f(y)dy\bigg|^2\frac{dt}{t^3}\\& \quad+\int_{8r}^{\infty}\bigg|\int_{\left|x-y\right|\leq t}\left|\Omega(x-y)\right|K{_j^\mathcal{L}}(x,y)f(y)dy\bigg|^2\frac{dt}{t^3}\nonumber\\&
	=:[\mu{_{j,\Omega;r}^\mathcal{L}}(f)(x)]^2+[\mu{_{j,\Omega;\infty}^\mathcal{L}}(f)(x)]^2.
\end{align}
Then, similar to Theorem \ref{th1},  to prove (\ref{eq:th3pf5}), it is enough to show 
\begin{align}\label{eq:th3pf8}
	\int_{B}[\mu{_{j,\Omega;r}^\mathcal{L}}(f)(x)]^pdx\lesssim\left|B\right|^{1+p\alpha},
\end{align}
and
\begin{align}\label{eq:th3pf9}
	\sup\limits_{x,y\in B}\left|[\mu{_{j,\Omega;\infty}^\mathcal{L}}(f)(x)]^2-[\mu{_{j,\Omega;\infty}^\mathcal{L}}(f)(y)]^2\right|^\frac{p}{2}\lesssim\left|B\right|^{p\alpha}.
\end{align}
Consider to estimate (\ref{eq:th3pf8}). Write $f$ in the same form as in (\ref{eq:th1pfffenjie2}). It follows from the $L^p(\mathbb{R}^n)\,(1<p<\infty)$ boundedness of $\mu{_{j,\Omega}^\mathcal{L}}$, (\ref{def:LMCS}) and Lemma \ref{lem:fcampanato1}~(ii) that
\begin{align}\label{eq:th3pf10}
		\int_{B}[\mu{_{j,\Omega;r}^\mathcal{L}}(f_1^*)(x)]^pdx&\leq\int_{B}[\mu{_{j,\Omega}^\mathcal{L}}(f_1^*)(x)]^pdx\lesssim\int_{B^*}\left|f(x)-f_B\right|^pdx\lesssim\left|B\right|^{1+p\alpha}.
	\end{align}
Lemma \ref{lem:kjlguji}~(i), Lemma \ref{lem:fcampanato1}~(ii), (\ref{def:LMCS}) and the H\"{o}lder inequality then give
\begin{align}\label{eq:th3pfn1}
	&\bigg|\int_{\left|x-y\right|\leq t}\left|\Omega(x-y)\right|K{_j^\mathcal{L}}(x,y)f{_2^*}(y)dy\bigg|\nonumber\\&
	\lesssim\frac{1}{r^{n-1}}\bigg(\int_{\left|x-y\right|\leq t}\left|\Omega(x-y)\right|\left|f(y)-f_{B(x,t)}\right|dy+\int_{\left|x-y\right|\leq t}\left|\Omega(x-y)\right|\left|f_{B(x,t)}-f_{B}\right|dy\bigg)\nonumber\\&
	\lesssim t\left(\frac{t}{r}\right)^{n-1}\bigg[\bigg(\frac{1}{\left|B(x,t)\right|}\int_{B(x,t)}\left|f(y)-f_{B(x,t)}\right|^{p}dy\bigg)^{\frac{1}{p}}+\frac{t}{r}\left|B\right|^\alpha\bigg]
	\lesssim t\left(\frac{t}{r}\right)^{n}\left|B\right|^\alpha.
\end{align}
Combining this with (\ref{eq:th3pf7}) and $r<t$, it follows that
\begin{align}\label{eq:th3pf11}
		\int_{B}[\mu{_{j,\Omega;r}^\mathcal{L}}(f_2^*)(x)]^pdx\lesssim\int_{B}\bigg(\int_{r}^{8r}\left|t\left(\frac{t}{r}\right)^{n}\left|B\right|^\alpha\right|^2\frac{dt}{t^3}\bigg)^\frac{p}{2}dx\lesssim\left|B\right|^{1+p\alpha}.
	\end{align}
By the Minkowski inequality and $\mu{_{j,\Omega}}(1)=0$, we have 
\begin{align*}
	\mu{_{j,\Omega;r}^\mathcal{L}}(f_B)(x)\leq& \bigg(\int_{0}^{8r}\bigg|\int_{\left|x-y\right|\leq t}\big|\Omega(x-y)\big|\left[K{_j^\mathcal{L}}(x,y)-K_j(x,y)\right]f{_B}dy\bigg|^2\frac{dt}{t^3}\bigg)^\frac{1}{2}.
\end{align*}
Similar to Theorem \ref{th1}, using the H\"{o}lder inequality, we can also obtain the inner layer integral part is bounded by $ \big(\frac{t}{\rho(x_0)}\big)^\delta t\left|f_B\right|.$ Taking $\delta=\delta_5>1$, it follows from Lemma \ref{lem:fcampanato1}~(i) and $r<\rho(x_0)$ that $\mu{_{j,\Omega;r}^\mathcal{L}}(f_B)(x)\lesssim\left|B\right|^\alpha\left(\frac{r}{\rho(x_0)}\right)^{\delta_5-1}
\leq\left|B\right|^\alpha.$
Thus,
\begin{align}\label{eq:th3pf12}
	\int_{B}[\mu{_{j,\Omega;r}^\mathcal{L}}(f_B)(x)]^pdx\lesssim\left|B\right|^{1+p\alpha}.
\end{align}
Suming up (\ref{eq:th3pf10}), (\ref{eq:th3pf11}), (\ref{eq:th3pf12}), we obtain (\ref{eq:th3pf8}). In addition, (\ref{eq:th3pf9}) follows from Lemma \ref{lem:kernelguji2} for $s=0$ and $\alpha\leq0$. Therefore, (\ref{eq:th3pf5}) is proved.

Next, let us consider the case $\alpha>0$. We need to prove (\ref{eq:th3pf1}), (\ref{eq:th3pf8}) and (\ref{eq:th3pf9}). (\ref{eq:th3pf1}) is totally the same as before. For (\ref{eq:th3pf8}), it is similar to Theorem \ref{th1} so we only need to make some minor modifications to get (\ref{eq:th3pf11}) and (\ref{eq:th3pf12}). By Lemma \ref{lem:fcampanato1}~(ii):\ $\left|f_{B(x,t)}-f_B\right|\lesssim\left(\frac{t}{r}\right)^{n\alpha}\left|B\right|^\alpha(\alpha>0)$, we can obtain the analogue of (\ref{eq:th3pfn1}) as follow:
\begin{align*}
	&\Big|\int_{\left|x-y\right|\leq t}\left|\Omega(x-y)\right|K{_j^\mathcal{L}}(x,y)f{_2^*}(y)dy\Big|\\&
	\lesssim t\left(\frac{t}{r}\right)^{n-1}\Bigg\{\Big(\frac{1}{\left|B(x,t)\right|}\int_{B(x,t)}\left|f(y)-f_{B(x,t)}\right|^{p}dy\Big)^{\frac{1}{p}}+\left(\frac{t}{r}\right)^{n\alpha}\left|B\right|^\alpha\Bigg\}\\&
	\lesssim t\left(\frac{t}{r}\right)^{n+n\alpha-1}\left|B\right|^\alpha.
\end{align*}
Similarly, we have $$\int_{B}[\mu{_{j,\Omega;r}^\mathcal{L}}(f_2^*)(x)]^pdx\lesssim\int_{B}\Big[\int_{r}^{8r}\big|t\left(\frac{t}{r}\right)^{n+n\alpha-1}\left|B\right|^\alpha\big|^2\frac{dt}{t^3}\Big]^\frac{p}{2}dx\nonumber
\lesssim\left|B\right|^{1+p\alpha}.$$
The difference in proving (\ref{eq:th3pf12}) is due to lemma \ref{lem:fcampanato1}~(i). Therefore, for $r<\rho(x_0)$, we take $\delta=\delta_6>n\alpha$ and it follows $\mu{_{j,\Omega;r}^\mathcal{L}}(f_B)(x)\lesssim\left|B\right|^\alpha\big(\frac{r}{\rho(x_0)}\big)^{\delta_6-n\alpha}
\leq\left|B\right|^\alpha.$

It remains to prove (\ref{eq:th3pf9}). Since $\alpha>0$, we need to use more complicated decomposition as in the proof of Theorem \ref{th1}. For any $x,y\in B=B(x_0,r)\notin D_\rho,$ it follows from Lemma \ref{lem:fuzhuhanshu} that $\rho(x)\backsim\rho(x_0)\backsim\rho(y).$ Then\begin{align*}
&\left|[\mu{_{j,\Omega;\infty}^\mathcal{L}}(f)(x)]^2-[\mu{_{j,\Omega;\infty}^\mathcal{L}}(f)(y)]^2\right|\\&
	\leq\int_{8r}^{\infty}\Big|\int_{\left|x-z\right|\leq t}\left|\Omega(x-z)\right|K{_j^\mathcal{L}}(x,z)f(z)dz-\int_{\left|y-z\right|\leq t}\left|\Omega(y-z)\right|K{_j^\mathcal{L}}(y,z)f(z)dz\Big|\\
	&\times\bigg(\Big|\int_{\left|x-z\right|\leq t}\left|\Omega(x-z)\right|K{_j^\mathcal{L}}(x,z)f(z)dz\Big|+\Big|\int_{\left|y-z\right|\leq t}\left|\Omega(y-z)\right|K{_j^\mathcal{L}}(y,z)f(z)dz\Big|\bigg)\frac{dt}{t^3}.
\end{align*}
For any $ x\in B=B(x_0,r)$ and $t\geq8r$, use the same notation as in Theorem \ref{th1}, 
\begin{align*}
	E_f(x,t):=\int_{\left|x-z\right|\leq t}\left|\Omega(x-z)\right|K{_j^\mathcal{L}}(x,z)f(z)dz.
\end{align*}
It follows from Lemma \ref{lem:kernelguji2} that for any $x\in\mathbb{R}^n,\ t>0,\ 0\leq s<\min\{1,\,1+n\alpha\}$, there exists a positive constant such that for any $f\in\mathcal{E}{_\mathcal{L}^{\alpha,p}}(\mathbb{R}^n),$ we have 
\begin{align}\label{eq:th4pf5}
	\left|E_f(x,t)\right|\lesssim t\left[\frac{\rho(x)}{t}\right]^s\left|B(x,t)\right|^\alpha\lesssim t\left[\frac{\rho(x)}{t}\right]^s\left(\frac{t}{r}\right)^{n\alpha}\left|B\right|^\alpha.
\end{align}
For each fixed $x\in B$ and $ t\geq8r$, we set
\begin{align*}
	H_f(x,y,t):=\left|E_f(x,t)-E_f(x,8r)-[E_f(y,t)-E_f(y,8r)]\right|.
\end{align*}
Likewise, $\left|E_f(x,t)-E_f(y,t)\right|\leq\left|E_f(x,8r)\right|+\left|E_f(y,8r)\right|+H_f(x,y,t),$ and we will estimate the right side of the inequality respectively. By (\ref{eq:th4pf5}), when $n\alpha-1<0$, we get
\begin{align}\label{eq:th4pf6}
	\int_{8r}^{\infty}\left|E_f(x,8r)\right|\left|E_f(x,t)\right|\frac{dt}{t^3}\lesssim\int_{8r}^{\infty}r\left|B\right|^\alpha t\left(\frac{t}{r}\right)^{n\alpha}\left|B\right|^\alpha\frac{dt}{t^3}\lesssim\left|B\right|^{2\alpha},
\end{align}
and
\begin{align}\label{eq:th4pf7}
	\int_{8r}^{\infty}\left|E_f(y,8r)\right|\left|E_f(x,t)\right|\frac{dt}{t^3}\lesssim\left|B\right|^{2\alpha}.
\end{align}
To obtain (\ref{eq:th3pf9}), it suffices to prove that for any $x,y\in B$,
\begin{align}\label{eq:th4pf8}
	\int_{8r}^{\infty}H_f(x,y,t)\left|E_f(x,t)\right|\frac{dt}{t^3}\lesssim\left|B\right|^{2\alpha}.
\end{align}
Indeed, decompose $H_f(x,y,t)$ in the following way
\begin{align*}
	H_f(x,y,t)
	\lesssim
	&\sum_{i=1}^4
	\Big|\int_{E_i}\left|\Omega(x-z)\right|\left[K{_j^\mathcal{L}}(x,z)f(z)\right]dz\Big|
\\&+\Big|\int_{E_5}\left[\left|\Omega(x-z)\right|K{_j^\mathcal{L}}(x,z)-\left|\Omega(y-z)\right|K{_j^\mathcal{L}}(y,z)\right]\left[f(z)\right]dz\Big|\\&	=:\sum_{i=1}^5 H_{f,i}(x,y,t).
\end{align*}
We need to consider the contributions of each term of the summation. 

\noindent {\bf Contributions of $H_{f,1}(x,y,t)$ and $H_{f,2}(x,y,t)$}. The fact $\mu_{j,\Omega}(1)=0$ and (\ref{eq:th4pf5}) give
\begin{align*}
	&\int_{8r}^{\infty}H_{f,1}(x,y,t)\left|E_f(x,t)\right|\frac{dt}{t^3}\\&
	\lesssim\frac{\left|B\right|^\alpha}{r^{n\alpha}}\int_{8r}^{\infty}\Big|\int_{E_1}\left|\Omega(x-z)\right|\left[K{_j^\mathcal{L}}(x,z)\left(f(z)-f_{B(x,r)}\right)\right]dz\Big|\frac{dt}{t^{2-n\alpha}}\\
	&+\frac{\left|B\right|^\alpha}{r^{n\alpha}}\int_{E_1}\left|\Omega(x-z)\right|\left[K{_j^\mathcal{L}}(x,z)-K_{j}(x,z) \right]f_{B(x,r)}dz\Big|\frac{dt}{t^{2-n\alpha}}\\&
	=:I_1+I_2.
\end{align*}
For the first term $I_1$. When $2n\alpha-1<0$, by the Minkowski inequality, Lemma \ref{lem:kjlguji}~(i), (\ref{eq:th2pf11}) and Lemma \ref{lem:fcampanato1} (ii), we obtain 
\begin{align*}
	I_1\lesssim& r\frac{\left|B\right|^\alpha}{r^{n\alpha}}\int_{(B(x,4r))^c}\frac{\left|\Omega(x-z)\right|}{\left|x-z\right|^{n+1-n\alpha}}\left|f(z)-f_{B(x,r)}\right|dz\\
	\lesssim& \left|B\right|^\alpha\sum_{k=1}^{\infty}\frac{\left(2^k\right)^{n\alpha-1}}{\left|B(x,2^{k+1}r)\right|}\int_{B(x,2^{k+1}r)}\left|\Omega(x-z)\right|\left|f(z)-f_{B(x,r)}\right|dz\\
	\lesssim& \left|B\right|^\alpha\sum_{k=1}^{\infty}\Big\{\left(2^k\right)^{n\alpha-1}\Big(\frac{1}{\left|B(x,2^{k+1}r)\right|}\int_{B(x,2^{k+1}r)}\left|f(z)-f_{B(x,2^{k+1}r)}\right|^pdz\Big)^{\frac{1}{p}}\\
	&+\left(2^k\right)^{n\alpha-1}\Big(\frac{1}{\left|B(x,2^{k+1}r)\right|}\int_{B(x,2^{k+1}r)}\left|f_{B(x,2^{k+1}r)}-f_{B(x,r)}\right|^pdz\Big)^{\frac{1}{p}}\Big\}\\
	\lesssim&\left|B\right|^\alpha\sum_{k=1}^{\infty}\left(2^k\right)^{n\alpha-1}\left[\left|B(x,2^{k+1}r)\right|^\alpha+2^{kn\alpha}\left|B\right|^\alpha\right]
	\lesssim\left|B\right|^{2\alpha}.
\end{align*}
Next we consider $I_2$. Similarly as in the proof of $II$ in Theorem \ref{th1}, we take  $n\alpha<\delta=\delta_8<\frac{1-n\alpha}{2},$ since $3n\alpha-1<0$, and obtain
\begin{align*}
	I_2&\lesssim\frac{\left|B\right|^{2\alpha}}{r^{n\alpha}}\Big(\frac{\rho(x_0)}{r}\Big)^{n\alpha}\sum_{k=1}^{\infty}\int_{B(x,2^{k+1}r)\backslash B(x,2^kr)}\frac{r\left|\Omega(x-z)\right|}{\left(2^kr\right)^{n-\delta_8+1-n\alpha}\left(\rho(x)\right)^{\delta_8}}\Big(1+\frac{2^{k+1}r}{\rho(x)}\Big)^{\delta_8}dz\\
	&\lesssim\left|B\right|^{2\alpha}\Big(\frac{\rho(x_0)}{r}\Big)^{n\alpha-\delta_8}\sum_{k=1}^{\infty}(2^k)^{n\alpha+2\delta_8-1}\Big(\frac{1}{\left|B(x,2^{k+1}r)\right|}\int_{B(x,2^{k+1}r)}\left|\Omega(x-z)\right|^qdz\Big)^{\frac{1}{q}}\\
	&\lesssim\left|B\right|^{2\alpha}\Big(\frac{r}{\rho(x_0)}\Big)^{\delta_8-n\alpha}\lesssim\left|B\right|^{2\alpha}.
\end{align*}
Hence, 
\begin{align}\label{eq:th4pf11}
	\int_{8r}^{\infty}H_{f,1}(x,y,t)\left|E_f(x,t)\right|\frac{dt}{t^3}\lesssim\left|B\right|^{2\alpha}.
\end{align}
Similarly, we get
\begin{align}\label{eq:th4pf12}
	\int_{8r}^{\infty}H_{f,2}(x,y,t)\left|E_f(x,t)\right|\frac{dt}{t^3}\lesssim\left|B\right|^{2\alpha}.
\end{align}

\noindent {\bf Contributions of $H_{f,3}(x,y,t)$ and $H_{f,4}(x,y,t)$}. 
For $H_{f,3}(x,y,t)$, the fact $\mu_{j,\Omega}(1)=0$ allows us to obtain 
\begin{align}\label{eq:th5pfn2}
	H_{f,3}(x,y,t)
	\leq&\Big|\int_{E_3}\left|\Omega(x-z)\right|K{_j^\mathcal{L}}(x,z)\left[f(z)-f_{B(x,r)}\right]dz\Big|\nonumber\\
	&+\Big|\int_{E_3}\left|\Omega(x-z)\right|\left[K{_j^\mathcal{L}}(x,z)-K_j(x,z)\right]f_{B(x,r)}dz\Big|.
\end{align}
Similar argument as in Theorem \ref{th1} gives $\rho(x_0)\backsim\rho(z).$ We make use of Lemma \ref{lem:kjlguji}~(i), Lemma \ref{lem:fcampanato1}, the H\"{o}lder inequality and (\ref{def:LMCS}) to get
\begin{align*}
	&\Big|\int_{E_3}\left|\Omega(x-z)\right|K{_j^\mathcal{L}}(x,z)\left[f(z)-f_{B(x,r)}\right]dz\Big|\\&
	\lesssim\int_{8r\leq\left|x-z\right|\leq 10r}\frac{\left|\Omega(x-z)\right|}{\left|x-z\right|^{n-1}}\left|f(z)-f_{B(x,r)}\right|dz\\&
	\lesssim\frac{1}{r^{n-1}}\int_{\left|x-z\right|\leq10r}\left|\Omega(x-z)\right|\left\{\left|f(z)-f_{B(x,10r)}\right|+\left|f_{B(x,10r)}-f_{B(x,r)}\right|\right\}dz\\&
	\lesssim r\Big(\frac{1}{\left|B(x,10r)\right|}\int_{\left|x-z\right|\leq 10r}\left|f(z)-f_{B(x,10r)}\right|^{p}dz\Big)^{\frac{1}{p}}+r\left|B\right|^\alpha\lesssim r\left|B\right|^\alpha.
\end{align*}
Applying Lemma \ref{lem:kjlguji} (iii), Lemma \ref{lem:fcampanato1} and the H\"{o}lder inequality, and taking $\delta=\delta_7>n\alpha$, we obtain 
\begin{align*}
	&\Big|\int_{E_3}\left|\Omega(x-z)\right|\left[K{_j^\mathcal{L}}(x,z)-K_j(x,z)\right]f_{B(x,r)}dz\Big|\\&
	\lesssim\int_{8r\leq\left|x-z\right|\leq 10r}\frac{\left|\Omega(x-z)\right|}{\left|x-z\right|^{n-\delta_7-1}\left[\rho(z)\right]^{\delta_7}}\left|f_{B(x,r)}\right|dz\\&
	\lesssim r\left(\frac{r}{\rho(x_0)}\right)^{\delta_7-n\alpha}\left|B\right|^\alpha\Big(\frac{1}{\left|B(x,10r)\right|}\int_{\left|x-z\right|\leq 10r}\left|\Omega(x-z)\right|^qdz\Big)^{\frac{1}{q}}
	\lesssim r\left|B\right|^\alpha.
\end{align*}
Thus, when $n\alpha-1<0$, (\ref{eq:th4pf5}) and (\ref{eq:th5pfn2}) imply that
\begin{align}\label{eq:th4pf9}
	\int_{8r}^{\infty}H_{f,3}(x,y,t)\left|E_f(x,t)\right|\frac{dt}{t^3}\lesssim\int_{8r}^{\infty}r\left|B\right|^\alpha t\left(\frac{t}{r}\right)^{n\alpha}\left|B\right|^\alpha\frac{dt}{t^3}\lesssim\left|B\right|^{2\alpha}.
\end{align}
Similarly, we have 
\begin{align}\label{eq:th4pf10}
	\int_{8r}^{\infty}H_{f,4}(x,y,t)\left|E_f(x,t)\right|\frac{dt}{t^3}\lesssim\left|B\right|^{2\alpha}.
\end{align}

\noindent {\bf Contributions of $H_{f,5}(x,y,t)$. }  Now it remains to consider $H_{f,5}(x,y,t)$. Note that
\begin{align*}
	&\left[\left|\Omega(x-z)\right|K{_j^\mathcal{L}}(x,z)-\left|\Omega(y-z)\right|K{_j^\mathcal{L}}(y,z)\right]f(z)\\&
	=\left[\left|\Omega(x-z)\right|-\left|\Omega(y-z)\right|\right]K{_j^\mathcal{L}}(y,z)f(z)\\&\quad+\left|\Omega(x-z)\right|\left[K{_j^\mathcal{L}}(x,z)-K{_j^\mathcal{L}}(y,z)\right](f(z)-f_{B(x,r)})\\
	&\quad+\left|\Omega(x-z)\right|\left\{[K{_j^\mathcal{L}}(x,z)-K_{j}(x,z)]-[K{_j^\mathcal{L}}(y,z)-K_{j}(y,z) ]\right\}f_{B(x,r)}\\
	&\quad+\left|\Omega(x-z)\right|\left[K_j(x,z)-K_{j}(y,z)\right]f_{B(x,r)}.
\end{align*}
Let $y-z=h\theta\in(8r,t)\times \mathbb{S}^{n-1}$. Since $x,y\in B,~8r\leq\left|x-z\right|\leq t,~ 8r\leq\left|y-z\right|\leq t$, for any orthogonal transformation T, there exists $\tau\in\mathbb{R}$ such that $x-z=\tau hT\theta\in(8r,t)\times \mathbb{S}^{n-1}$. Note that $\Omega$ is homogeneous of degree zero, it follows 
\begin{align}\label{eq:th4pf14}
	&\int_{E_5}\left|\Omega(x-z)\right|K_j(y,z)dz=\int_{8r}^{t}\int_{\mathbb{S}^{n-1}}\left|\Omega(T\theta)\right|\theta_jd\theta dh=0.
\end{align}
Combining this with the fact $\mu_{j,\Omega}(1)=0$ and Lemma \ref{lem:kernelguji}, we take $s=m$, where the range of $m$ will be given later, we get
\begin{align*}
	&\int_{8r}^{\infty}H_{f,5}(x,y,t)\left|E_f(x,t)\right|\frac{dt}{t^3}\nonumber\\&
	\lesssim\frac{\left|B\right|^\alpha}{r^{n\alpha}}\int_{8r}^{\infty}\left|\int_{E_5}\left[\left|\Omega(x-z)\right|-\left|\Omega(y-z)\right|\right]K{_j^\mathcal{L}}(y,z)f(z)dz\right|\frac{dt}{t^{2-n\alpha}}\nonumber\\
	&+\frac{\left|B\right|^\alpha}{r^{n\alpha}}\int_{8r}^{\infty}\left|\int_{E_5}\left|\Omega(x-z)\right|\left[K{_j^\mathcal{L}}(x,z)-K{_j^\mathcal{L}}(y,z)\right]\left[f(z)-f_{B(x,r)}\right]dz\right|\frac{dt}{t^{2-n\alpha}}\nonumber\\
	&+\left[\rho(x)\right]^m\frac{\left|B\right|^\alpha}{r^{n\alpha}}\int_{8r}^{\infty}\bigg|\int_{E_5}\left|\Omega(x-z)\right| K_j(x,y,z) f_{B(x,r)}dz\bigg|\frac{dt}{t^{2-n\alpha+m}}\nonumber\\
	=:&J_1+J_2+J_3.
\end{align*}
By $(\ref{eq:th4pf14})$ and $\mu_{j,\Omega}(1)=0$ again, we may bounded $J_1$ in the way
\begin{align*}
J_1\leq&\frac{\left|B\right|^\alpha}{r^{n\alpha}}\int_{8r}^{\infty}\Big|\int_{E_5}\left[\left|\Omega(x-z)\right|-\left|\Omega(y-z)\right|\right]K{_j^\mathcal{L}}(y,z)(f(z)-f_{B(y,r)})dz\Big|\frac{dt}{t^{2-n\alpha}}\\
	&+\frac{\left|B\right|^\alpha}{r^{n\alpha}}\int_{8r}^{\infty}\int_{E_5}\left[\left|\Omega(x-z)\right|-\left|\Omega(y-z)\right|\right]\left[K{_j^\mathcal{L}}(y,z)-K{_j}(y,z)\right]f_{B(y,r)}dz\Big|\frac{dt}{t^{2-n\alpha}}\\
	=:&J_{11}+J_{12}.
\end{align*}
Since $x,y\in B(x_0,r),~8r\leq\left|x-z\right|\leq t, ~8r\leq\left|y-z\right|\leq t$, we obtain $\left|x-z\right|\backsim\left|y-z\right|.$ Next, we will use the $L^q$-Dini typr condition (\ref{def:lqDini2}) to estimate $J_{11}$ and $J_{12}$ respectively. Let us first consider $J_{11}$. Using the Minkowski inequality, we have 
\begin{align*}
J_{11}\leq&\frac{\left|B\right|^\alpha}{r^{n\alpha}}\int_{(B(y,8r))^c}\big|\left|\Omega(x-z)\right|-\left|\Omega(y-z)\right|\big|\left|K{_j^\mathcal{L}}(y,z)\right|\left|f(z)-f_{B(y,r)}\right|\int_{\left|x-z\right|\leq t\atop \left|y-z\right|\leq t}\frac{dt}{t^{2-n\alpha}}dz\\
	\lesssim&\frac{\left|B\right|^\alpha}{r^{n\alpha}}\int_{(B(y,8r))^c}\left|\frac{\left|\Omega(x-z)\right|}{\left|x-z\right|^{n-n\alpha}}-\frac{\left|\Omega(y-z)\right|}{\left|x-z\right|^{n-n\alpha}}\right|\left|f(z)-f_{B(y,r)}\right|dz.
	\end{align*}
Similarly as we what we have done with (\ref{eq:th2pf11}) gives
\begin{align*}
		&\left|\frac{\left|\Omega(x-z)\right|}{\left|x-z\right|^{n-n\alpha}}-\frac{\left|\Omega(y-z)\right|}{\left|x-z\right|^{n-n\alpha}}\right|\lesssim\left|\frac{\left|\Omega(x-z)\right|}{\left|x-z\right|^{n-n\alpha}}-\frac{\left|\Omega(y-z)\right|}{\left|y-z\right|^{n-n\alpha}}\right|+\frac{r\left|\Omega(y-z)\right|}{\left|y-z\right|^{n+1-n\alpha}}
\end{align*}
Therefore
\begin{align*}
	J_{11}\lesssim&r\frac{\left|B\right|^\alpha}{r^{n\alpha}}\int_{(B(y,4r))^c}\frac{\left|\Omega(y-z)\right|}{\left|y-z\right|^{n+1-n\alpha}}\left|f(z)-f_{B(y,r)}\right|dz\\&
	+\frac{\left|B\right|^\alpha}{r^{n\alpha}}\int_{(B(y,8r))^c}\left|\frac{\left|\Omega(x-z)\right|}{\left|x-z\right|^{n-n\alpha}}-\frac{\left|\Omega(y-z)\right|}{\left|y-z\right|^{n-n\alpha}}\right|\left|f(z)-f_{B(y,r)}\right|dz\\
	&=:J_{11}^1+J_{11}^2.
\end{align*}
Exactly the same argument for $I$ applies to $J_{11}^1$, replacing $x$ with $y$ yields $J_{11}^1\lesssim\left|B\right|^{2\alpha}$. For  $	J_{11}^2$, it follows from lemma \ref{lem:kjlguji}~(i) and lemma \ref{lm:omegalqdini} that
\begin{align*}
	J_{11}^2&
	\leq\frac{\left|B\right|^\alpha}{r^{n\alpha}}\sum_{k=3}^{\infty}\Big(\int_{2^kr\leq\left|y-z\right|\leq2^{k+1}r}\left|\frac{\left|\Omega(x-z)\right|}{\left|x-z\right|^{n-n\alpha}}-\frac{\left|\Omega(y-z)\right|}{\left|y-z\right|^{n-n\alpha}}\right|^qdz\Big)^{\frac{1}{q}}\\
	&\quad\times\Big(\int_{2^kr\leq\left|y-z\right|\leq2^{k+1}r}\left|f(z)-f_{B(y,r)}\right|^{q'}dz\Big)^{\frac{1}{q'}}\\
	\lesssim&\frac{\left|B\right|^\alpha}{r^{n\alpha}}\sum_{k=3}^{\infty}\left(2^kr\right)^{n\alpha}\frac{1}{2^{k}}\Big(\frac{1}{\left(2^{k+1}r\right)^n}\int_{2^kr\leq\left|y-z\right|\leq2^{k+1}r}\left|f(z)-f_{B(y,r)}\right|^{q'}dz\Big)^{\frac{1}{q'}}\\
	&+\frac{\left|B\right|^\alpha}{r^{n\alpha}}\sum_{k=3}^{\infty}\left(2^kr\right)^{n\alpha}\Big(\int_{\frac{\left|x-y\right|}{2^{k+1}r}}^{\frac{\left|x-y\right|}{2^kr}}\frac{\omega_q(\sigma)}{\sigma}d\sigma\Big)\Big(\frac{1}{\left(2^{k+1}r\right)^n}\int_{2^kr\leq\left|y-z\right|\leq2^{k+1}r}\left|f(z)-f_{B(y,r)}\right|^{q'}dz\Big)^{\frac{1}{q'}}
	\end{align*}
Note that when $0<\epsilon\leq1$, we have $$\int_{\frac{\left|x-y\right|}{2^{k+1}r}}^{\frac{\left|x-y\right|}{2^{k}r}} \frac{\omega_q(\sigma)}{\sigma} d\sigma \leq \frac{1}{2^{k\epsilon}} \int_0^1 \frac{\omega_q(\sigma)}{\sigma^{1+\epsilon}} d \sigma.$$ Thus, when $2n\alpha-\epsilon<0$,
\begin{align*}  
J_{11}^2&\lesssim\left|B\right|^\alpha\sum_{k=3}^{\infty}\Big\{\frac{1}{\left(2^k\right)^{1-n\alpha}}+\frac{\left(2^k\right)^{n\alpha}}{\left(2^k\right)^{\epsilon}}\int_{0}^{1}\frac{\omega_q(\sigma)}{\sigma^{1+\epsilon}}d\sigma\Big\}\\&\quad\times\Big(\frac{1}{\left(2^{k+1}r\right)^n}\int_{2^kr\leq\left|y-z\right|\leq2^{k+1}r}\left|f(z)-f_{B(y,r)}\right|^{p}dz\Big)^{\frac{1}{p}}\\
	\lesssim&\left|B\right|^{2\alpha}\Big\{\sum_{k=3}^{\infty}\frac{1}{\left(2^k\right)^{1-2n\alpha}}+\sum_{k=3}^{\infty}\frac{1}{\left(2^k\right)^{\epsilon-2n\alpha}}\Big\}
	\lesssim\left|B\right|^{2\alpha}.
\end{align*}
For the term $J_2$. Similar as the estimate to $J_1$, we use the Minkowski inequality, Lemma \ref{lem:fuzhuhanshu}, Lemma \ref{lem:kjlguji} (iii) and dominate $	J_2$ by
\begin{align*}
&\frac{\left|B\right|^\alpha}{r^{n\alpha}}\int_{(B(y,8r))^c}\big|\left|\Omega(x-z)\right|-\left|\Omega(y-z)\right|\big|\left|K{_j^\mathcal{L}}(y,z)-K{_j}(y,z)\right|\left|f_{B(y,r)}\right|\int_{\left|x-z\right|\leq t\atop \left|y-z\right|\leq t}\frac{dt}{t^{2-n\alpha}}dz\\&
	\lesssim\frac{\left|B\right|^{2\alpha}}{r^{n\alpha}}\Big(\frac{\rho(x)}{r}\Big)^{n\alpha}\Big(\frac{1}{\rho(x)}\Big)^{\delta}\int_{(B(y,8r))^c}\bigg|\frac{\left|\Omega(x-z)\right|}{\left|x-z\right|^{n-\delta-n\alpha}}-\frac{\left|\Omega(y-z)\right|}{\left|x-z\right|^{n-\delta-n\alpha}}\bigg|\Big(1+\frac{\left|y-z\right|}{\rho(y)}\Big)^{l_0\delta}dz\\&
	\lesssim\frac{\left|B\right|^{2\alpha}}{r^{n\alpha}}\Big(\frac{\rho(x)}{r}\Big)^{n\alpha}\Big(\frac{1}{\rho(x)}\Big)^{\delta}\int_{(B(y,8r))^c}\bigg|\frac{\left|\Omega(x-z)\right|}{\left|x-z\right|^{n-\delta-n\alpha}}-\frac{\left|\Omega(y-z)\right|}{\left|y-z\right|^{n-\delta-n\alpha}}\bigg|\Big(1+\frac{\left|y-z\right|}{\rho(y)}\Big)^{\delta}dz\\
	&+\frac{\left|B\right|^{2\alpha}}{r^{n\alpha}}\Big(\frac{\rho(x)}{r}\Big)^{n\alpha}\Big(\frac{1}{\rho(x)}\Big)^{\delta}\int_{(B(y,4r))^c}\frac{r\left|\Omega(y-z)\right|}{\left|y-z\right|^{n-\delta+1-n\alpha}}\Big(1+\frac{\left|y-z\right|}{\rho(y)}\Big)^{\delta}dz=:J_{21}+J_{22}.
\end{align*}
It can be deduced in the same way as $I_2$ to get $J_{22}\leq\left|B\right|^{2\alpha}$. As for $J_{21}$, Lemma \ref{lm:omegalqdini} yields
\begin{align*}
J_{21}\lesssim&\frac{\left|B\right|^{2\alpha}}{r^{n\alpha}}\Big(\frac{\rho(x)}{r}\Big)^{n\alpha}\Big(\frac{1}{\rho(x)}\Big)^{\delta}\sum_{k=3}^{\infty}2^{k\delta}\\
	&\times\bigg[\int_{2^kr\leq\left|y-z\right|\leq2^{k+1}r}\bigg|\frac{\left|\Omega(x-z)\right|}{\left|x-z\right|^{n-\delta-n\alpha}}-\frac{\left|\Omega(y-z)\right|}{\left|y-z\right|^{n-\delta-n\alpha}}\bigg|^qdz\bigg]^{\frac{1}{q}}\Big(\int_{2^kr\leq\left|y-z\right|\leq2^{k+1}r}dz\Big)^{\frac{1}{q'}}\\
	\leq&\frac{\left|B\right|^{2\alpha}}{r^{n\alpha}}\Big(\frac{\rho(x)}{r}\Big)^{n\alpha}\Big(\frac{1}{\rho(x)}\Big)^{\delta}\sum_{k=3}^{\infty}2^{k\delta}\left(2^kr\right)^{\frac{n}{q}-(n-\delta-n\alpha)}\Big\{\frac{\left|x-y\right|}{2^{k}r}+\int_{\frac{\left|x-y\right|}{2^{k+1}r}}^{\frac{\left|x-y\right|}{2^kr}}\frac{\omega_q(\sigma)}{\sigma}d\sigma\Big\}\\
	&\times\Big(\int_{2^kr\leq\left|y-z\right|\leq2^{k+1}r}dz\Big)^{\frac{1}{q'}}
\end{align*}
In this case, we take $\delta=\delta_9=2n\alpha>n\alpha$ when $\alpha<\epsilon/{3n}$ and obtain 
\begin{align*}
	J_{21}\lesssim&\frac{\left|B\right|^{2\alpha}}{r^{n\alpha}}\Big(\frac{\rho(x)}{r}\Big)^{n\alpha}\Big(\frac{1}{\rho(x)}\Big)^{\delta_9}\sum_{k=3}^{\infty}2^{k\delta_9}\left(2^kr\right)^{\delta_9+n\alpha}\Big\{\frac{1}{2^{k}}+\frac{1}{\left(2^k\right)^{\epsilon}}\int_{0}^{1}\frac{\omega_q(\sigma)}{\sigma^{1+\epsilon}}d\sigma\Big\}\\
	&\times\Big(\frac{1}{\left(2^{k+1}r\right)^n}\int_{2^kr\leq\left|y-z\right|\leq2^{k+1}r}dz\Big)^{\frac{1}{q'}}\\
	\lesssim&\left|B\right|^{2\alpha}\Big(\frac{\rho(x)}{r}\Big)^{n\alpha}\Big(\frac{r}{\rho(x)}\Big)^{\delta_9}\Big\{\sum_{k=3}^{\infty}\frac{1}{\left(2^k\right)^{1-2\delta_9-n\alpha}}+\sum_{k=3}^{\infty}\frac{1}{\left(2^k\right)^{\epsilon-2\delta_9-n\alpha}}\Big\}\\
	\lesssim&\left|B\right|^{2\alpha}\Big(\frac{r}{\rho(x)}\Big)^{\delta_9-n\alpha}\lesssim\left|B\right|^{2\alpha}.
\end{align*}
Let us consider $J_2$. By the Minkowski  inequality, Lemma \ref{lem:kjlguji}~(ii), the H\"{o}lder inequality and Lemma \ref{lem:fcampanato1}, we take $\beta>2n\alpha$ and obtain 
\begin{align*}
J_2
	\lesssim& r^\beta\frac{\left|B\right|^\alpha}{r^{n\alpha}}\int_{(B(x,4r))^c}\frac{\left|\Omega(x-z)\right|\left|f(z)-f_{B(x,r)}\right|}{\left|x-z\right|^{n-1+\beta}}\Big(\int_{\left|x-z\right|}^{\infty}\frac{dt}{t^{2-n\alpha}}\Big)dz\\
	\lesssim& \left|B\right|^\alpha\sum_{k=1}^{\infty}\left(2^k\right)^{n\alpha-\beta}\Big(\frac{1}{\left|B(x,2^{k+1}r)\right|}\int_{B(x,2^{k+1}r)}\left|f(z)-f_{B(x,r)}\right|^{p}dz\Big)^{\frac{1}{p}}\\
	\lesssim& \left|B\right|^\alpha\sum_{k=1}^{\infty}\bigg\{\left(2^k\right)^{n\alpha-\beta}\Big(\frac{1}{\left|B(x,2^{k+1}r)\right|}\int_{B(x,2^{k+1}r)}\left|f(z)-f_{B(x,2^{k+1}r)}\right|^{p}dz\Big)^{\frac{1}{p}}\\
	&+\left(2^k\right)^{n\alpha-\beta}\Big(\frac{1}{\left|B(x,2^{k+1}r)\right|}\int_{B(x,2^{k+1}r)}\left|f_{B(x,2^{k+1}r)}-f_{B(x,r)}\right|^{p}dz\Big)^{\frac{1}{p}}\bigg\}\\
	\lesssim&\left|B\right|^\alpha\sum_{k=1}^{\infty}\left(2^k\right)^{n\alpha-\beta}\left[\left|B(x,2^{k+1}r)\right|^\alpha+2^{kn\alpha}\left|B\right|^\alpha\right]\lesssim\left|B\right|^{2\alpha}.
\end{align*}
Finally, it remains to consider $J_3$. Applying (\ref{eq:keyxfj}), $J_3$ can be controlled by two terms.
\begin{align*}
	{J_3}\lesssim&\frac{\left[\rho(x)\right]^m\left|B\right|^\alpha}{r^{n\alpha}}\int_{8r}^{\infty}\Big|\int_{E_5}\frac{\left|\Omega(x-z)\right||x-y|^\gamma}{|x-z|^{n+\gamma-1}}\left(\frac{|x-z|}{\rho(x)}\right)^{\eta}\left[ f_{B(x,r)}\right]dz\Big|\frac{dt}{t^{2-n\alpha+m}}\\
	&+\frac{\left[\rho(x)\right]^m\left|B\right|^\alpha}{r^{n\alpha}}\int_{8r}^{\infty}\int_{E_5}\frac{r\left|\Omega(x-z)\right|}{|x-z|^{n}}\left(\frac{|x-z|}{\rho(z)}\right)^\delta\left[ f_{B(x,r)}\right]dz\Big|\frac{dt}{t^{2-n\alpha+m}}\\
	=:&J_{31}+J_{32}.
\end{align*}
In Lemma \ref{lem:kernelguji2}, we take $s=m$, where $m$ satisfies $0<m=\eta-\frac{\gamma}{2}<1$. When $2n\alpha-\gamma<0$, we have $n\alpha-\eta+m<0$ and $n\alpha+\eta-\gamma-m<0$. By the Minkowski inequality, the H\"{o}lder inequality and Remark \ref{lem:kjlcha2}, we obtain 
\begin{align*}
J_{31}\lesssim&\left[\rho(x)\right]^m\frac{\left|B\right|^{2\alpha}}{r^{n\alpha}}\Big(\frac{\rho(x_0)}{r}\Big)^{n\alpha}\int_{(B(x,4r))^c}\frac{\left|\Omega(x-z)\right||x-y|^\gamma}{|x-z|^{n+\gamma-1}}\Big(\frac{|x-z|}{\rho(x)}\Big)^{\eta}\Big(\int_{\left|x-z\right|}^{\infty}\frac{dt}{t^{2-n\alpha+m}}\Big)dz\\
	\lesssim&\left[\rho(x)\right]^m\frac{\left|B\right|^{2\alpha}}{r^{n\alpha}}\Big(\frac{\rho(x_0)}{r}\Big)^{n\alpha}\sum_{k=1}^{\infty}\int_{B(x,2^{k+1}r)\backslash B(x,2^kr)}\frac{\left|\Omega(x-z)\right|r^{\gamma}}{\left(2^kr\right)^{n+\gamma-\eta+m-n\alpha}\left(\rho(x)\right)^\eta}dz\\
	\lesssim&\left[\frac{\rho(x)}{r}\right]^{m+n\alpha-\eta}\left|B\right|^{2\alpha}\sum_{k=1}^{\infty}(2^k)^{n\alpha+\eta-\gamma-m}\Big(\frac{1}{\left|B(x,2^{k+1}r)\right|}\int_{B(x,2^{k+1}r)}\left|\Omega(x-z)\right|^qdz\Big)^{\frac{1}{q}}\\
	\lesssim&\left|B\right|^{2\alpha}\Big(\frac{r}{\rho(x_0)}\Big)^{\eta-n\alpha-m}
	\lesssim\left|B\right|^{2\alpha}.
\end{align*}
For $J_{32}$, we pick $m=0$.The Minkowski integral inequality then leads to
\begin{align*}
J_{32}&\lesssim\frac{\left|B\right|^{2\alpha}}{r^{n\alpha}}\Big(\frac{\rho(x_0)}{r}\Big)^{n\alpha}\int_{(B(x,4r))^c}\frac{r\left|\Omega(x-z)\right|}{\left|x-z\right|^{n-\delta+1-n\alpha}\left(\rho(z)\right)^{\delta}}dz
\end{align*}
Using the same method as $I_2$ to prove $J_{32}$ with $m=0$, it follows $J_{32}\lesssim\left|B\right|^{2\alpha}$ and hence $J_3\lesssim\left|B\right|^{2\alpha}$. Therefore 
\begin{align}\label{eq:th4pf13}
	\int_{8r}^{\infty}H_{f,5}(x,y,t)\left|E_f(x,t)\right|\frac{dt}{t^3}\lesssim\left|B\right|^{2\alpha}.
\end{align}
By (\ref{eq:th4pf11}), (\ref{eq:th4pf12}) , (\ref{eq:th4pf9}), (\ref{eq:th4pf10}) and (\ref{eq:th4pf13}), we have (\ref{eq:th4pf8}).
Combining  (\ref{eq:th4pf8}) with (\ref{eq:th4pf6}) and (\ref{eq:th4pf7}), we obtain  (\ref{eq:th3pf9}). The proof of Theorem \ref{th3} is finished.
\qed

 \end{document}